\date{}
\newtheorem{lemma}{Lemma} 
\newtheorem{theorem}{Theorem}
\newtheorem{prop}{Proposition}
\newtheorem{corollary}{Corollary}
\newtheorem{rmk}{Remark}
\newtheorem{assumption}{Assumption}
\def\R{\mathbb{R}}
\def\mc{\mathcal}
\def\mb{\mathbf}
\def\mbb{\mathbb}
\def\ra{\rightarrow}
\def\E{\mbb{E}}
\def\P{\mathbb{P}}
\def\W{\mathbf{W}}
\def\G{\mb{G}}
\def\J{\mb{J}}
\def\H{\mathbf{H}}
\def\I{\mb{I}}
\def\1{\mathbbm{1}}
\def\F{\mc{F}}
\def\mbb{\mathbb}
\def\mb{\mathbf}
\def\mc{\mathcal}
\def\wh{\widehat}
\def\ul{\widetilde}
\def\ol{\overline}
\def\ul{\underline}
\def\bds{\boldsymbol}
\newcommand{\mn}[1]{{\left\vert\kern-0.25ex\left\vert\kern-0.25ex\left\vert\kern0.3ex #1 
    \kern0.3ex\right\vert\kern-0.25ex\right\vert\kern-0.25ex\right\vert}}
\renewcommand{\arraystretch}{1.4}
\def\mbb{\mathbb}
\def\mb{\mathbf}
\def\mc{\mathcal}
\def\wh{\widehat}
\def\ol{\overline}
\def\ul{\underline}
\def\x{\mb{x}}
\def\y{\mb{y}}
\def\a{\alpha}
\def\g{\mb{g}}
\def\nf{\nabla\mb{f}}
\def\ra{\rightarrow}
\def\bds{\boldsymbol}
\newenvironment{P1}
  {\begin{proof}[Proof of Theorem~\ref{main_ncvx}]}
  {\end{proof}}
\newenvironment{P2}
  {\begin{proof}[Proof of Theorem~\ref{main_PL}]}
  {\end{proof}}
\begin{document}

    \title{A fast randomized incremental gradient method for decentralized non-convex optimization}
\author{Ran Xin,  Usman A. Khan, and Soummya Kar
		\thanks{RX and SK are with the Electrical and Computer Engineering (ECE) Dept. at Carnegie Mellon University, \texttt{\{ranx,soummyak\}@andrew.cmu.edu}. UAK is with the ECE Dept. at Tufts University, \texttt{khan@ece.tufts.edu}. The work of SK and RX has been partially supported by NSF under award \#1513936. The work of UAK has been partially supported by NSF under awards \#1903972 and \#1935555. 
		}
	}

\maketitle
\begin{abstract}
We study decentralized non-convex finite-sum minimization problems described over a network of nodes, where each node possesses a local batch of data samples. {\color{black}In this context, we analyze a single-timescale randomized incremental gradient method, called~\textbf{\texttt{GT-SAGA}}.} \textbf{\texttt{GT-SAGA}} is computationally efficient as it evaluates one component gradient per node per iteration and achieves provably fast and robust performance by leveraging node-level variance reduction and network-level gradient tracking. For general smooth non-convex problems, we show the almost sure and mean-squared convergence of \textbf{\texttt{GT-SAGA}} to a first-order stationary point and further describe regimes of practical significance where it outperforms the existing approaches and achieves a network topology-independent iteration complexity respectively. When the global function satisfies the Polyak-\L ojaciewisz condition, we show that~\textbf{\texttt{GT-SAGA}} exhibits linear convergence to an optimal solution in expectation and describe regimes of practical interest where the performance is network topology-independent and improves upon the existing methods. {\color{black}Numerical experiments are included to highlight the main convergence aspects of \textbf{\texttt{GT-SAGA}} in non-convex settings.} 
\end{abstract}

\begin{IEEEkeywords}
Decentralized non-convex optimization, variance reduction, incremental gradient methods.  
\end{IEEEkeywords}

\section{Introduction}
In this paper, we consider decentralized optimization problems that arise in many  control and modern learning applications where very large-scale and geographically distributed nature of data precludes centralized storage and processing. The problem setup is based on~$n$ nodes communicating over a network modeled as a directed graph~${\mc G := \{\mc{V},\mc{E}\}}$, where ${\mc{V} := \{1,\cdots,n\}}$ is the set of node indices and~$\mc{E}$ is the collection of ordered pairs~${(i,r),i,r\in\mc{V}}$, such that node~$r$ sends information to node~$i$. Each node~$i$ has access to a local, possibly private, collection of~$m$ smooth component functions~${\{f_{i,j}:\mbb{R}^p\rightarrow\mbb{R}\}_{j=1}^m}$ that are non-convex. Each~$f_{i,j}$ can be viewed as a cost incurred by the~$j$-th data sample at the~$i$-th node. The goal of the networked nodes is to agree on a first-order stationary point of the average of all component functions via local computation and communication at each node, i.e., 
\begin{equation}\label{Problem}
\min_{\mb{x}\in\mbb{R}^p}F(\mb x) := \frac{1}{n}\sum_{i=1}^n f_i(\mb x), ~~ f_i(\mb x) := \frac{1}{m}\sum_{j=1}^{m}f_{i,j}(\mb x).
\end{equation}
Decentralized optimization dates back to the seminal contribution of Tsitsiklis in 1980s~\cite{DGD_tsitsiklis}, where the primary focus was on control and signal estimation problems, and has received a resurgence of interest recently due to its promise in large-scale control and machine learning applications~\cite{DSGD_NIPS,SGP_ICML}.

\subsection{Related work}
First-order methods~\cite{book_beck,PIEEE_Xin} that rely mainly on gradient information are commonly used to approach large-scale optimization formulations like Problem~\eqref{Problem}. The works on decentralized first-order methods include several well-known papers on decentralized gradient descent (DGD)~\cite{DGD_nedich,DGD_Kar,diffusion_Chen,DGD_Yin}. Although DGD-type methods are effective in homogeneous environments like data centers, its performance degrades significantly when data distributions across the nodes become heterogeneous~\cite{SPM_Xin}. Decentralized first-order methods that improve the performance of DGD include, e.g., EXTRA~\cite{EXTRA}, Exact Diffusion/NIDS~\cite{SED,NIDS,D2}, DLM~\cite{DLM}, and methods based on gradient tracking~\cite{GT_CDC,NEXT_scutari,harnessing,DIGing,MP_Pu,improved_DSGT_Xin,dual_GT,GT_Wai}; see also general primal-dual frameworks~\cite{GT_jakovetic,PD_AL,PD_Xu,PD_Lu} that unify the aforementioned methods under certain conditions. Some structured formulations have also been considered recently, such as weak convexity~\cite{wcvx_SS}, coupled constraints~\cite{constraint_diffusion}, and coordinate updates~\cite{block_GT}. 

In decentralized batch gradient methods such as~\cite{DGD_Yin,harnessing,EXTRA}, each node~$i$ computes a full batch gradient $\sum_j\nabla f_{i,j}$ at each iteration. Clearly, batch gradient computation becomes expensive when the local batch size~$m$ is large and nodes have limited computational capabilities. Efficient stochastic  methods, e.g.,~\cite{D2,SED,DSGD_Pu,DSGD_vlaski_1,improved_DSGT_Xin,DSGD_Swenson}, thus use randomly sampled component gradients from each local batch; however, the convergence of these methods is typically slower compared with their batch gradient counterparts due to the persistent noise incurred by the stochastic gradients. Towards fast convergence with stochastic gradients, popular variance reduction techniques, e.g.,~\cite{SAGA,SVRG,SARAH,spider,S2GD}, have been adapted to the decentralized settings. {\color{black}For instance, algorithms in~\cite{DSA,DAVRG,GTVR,Network-DANE,AccDVR,SPM_Xin}, including \textbf{\texttt{GT-SAGA}}~\cite{GTVR,SPM_Xin} considered in this paper, are shown to achieve linear rate to the optimal solution for strongly-convex problems; however, the applicability of these methods to non-convex problems remains open.} Recent works~\cite{D_Get,GT-SARAH} propose decentralized variance-reduced methods for non-convex problems. These two methods however require periodic batch gradient evaluations across the nodes in addition to component gradient computations at each iteration; this \textit{two-timescale hybrid scheme} imposes practical implementation challenges, such as periodic network synchronizations, especially over large-scale ad hoc networks.

\subsection{Our contributions}
{\color{black}In this paper, we analyze \textbf{\texttt{GT-SAGA}}, a \textit{single-timescale randomized incremental} gradient method, originally proposed in~\cite{GTVR} for strongly-convex problems, and show that it achieves fast convergence in non-convex settings. At the node level, \textbf{\texttt{GT-SAGA}} adopts a local SAGA-type \cite{SAGA,SAGA_reddi,IAG_mert,IAG_wai,IAG_mert2} randomized incremental approach to obtain variance-reduced estimates of local batch gradients, by leveraging historical component gradient information. At the network level, \textbf{\texttt{GT-SAGA}} employs a gradient tracking mechanism~\cite{GT_CDC,NEXT_scutari} to fuse the local batch gradient estimates, obtained from the local SAGA procedures, to track the global batch gradient. These are the two building blocks that amount to the fast convergence and robustness to heterogeneous data in \textbf{\texttt{GT-SAGA}} for non-convex problems.}
Compared with the existing two-timescale variance reduced methods~\cite{D_Get,GT-SARAH} for decentralized non-convex optimization, \textbf{\texttt{GT-SAGA}} is single-timescale and eliminates completely the need of batch gradient computations and periodic network synchronizations, and is hence much easier to implement especially in ad hoc settings; see Remarks~\ref{rmk_prac} and~\ref{rmk_stor} for further discussion. The main technical contributions in this paper are summarized as follows:

\subsubsection{General smooth non-convex problems} 
For this problem class, we show the asymptotic convergence of \textbf{\texttt{GT-SAGA}} to a first-order stationary point in the almost sure and mean-squared sense. In a big-data regime, where the local batch size~$m$ is very large, \textbf{\texttt{GT-SAGA}} achieves a network topology-independent convergence rate, leading to a non-asymptotic linear speedup compared with the centralized SAGA~\cite{SAGA_reddi} at a single node. In large-scale network regimes, i.e., when the number of the nodes and the network spectral gap inverse are relatively large compared to the local batch size~$m$, we show that \textbf{\texttt{GT-SAGA}} outperforms the existing best known convergence rate~\cite{GT-SARAH}. {\color{black}We also introduce a measure of function heterogeneity across the nodes. Based on this measure, we show that the effect of function heterogeneity on the convergence rate of \textbf{\texttt{GT-SAGA}} appears in a fashion that is separable from the effects of local batch size and the network spectral gap. As a consequence, the effect of function heterogeneity often diminishes when the local batch size is large and/or the connectivity of the network is weak, demonstrating the robustness of \textbf{\texttt{GT-SAGA}} to function heterogeneity. In contrast, the state-of-the-art decentralized non-convex variance-reduced method~\cite{GT-SARAH} does not achieve such separation and hence has worse convergence rate than \textbf{\texttt{GT-SAGA}} when the function heterogeneity is large and the network is weakly connected. These improvements are achieved by leveraging the conditional unbiasedness of SAGA estimators to obtain tighter bounds in the stochastic gradient tracking analysis.
See Remarks~\ref{rmk_hete},~\ref{rmk_bdata}, and~\ref{rmk_network} for details.} 

\subsubsection{Smooth non-convex problems under the global Polyak-\L ojasiewicz (PL) condition} 
For this problem class, we show that~\textbf{\texttt{GT-SAGA}} achieves linear convergence to an optimal solution in expectation. To the best of our knowledge, this is the first linear rate result for decentralized variance-reduced methods under the PL condition, while the existing ones require strong convexity \cite{DSA,DAVRG,Network-DANE,AccDVR,GTVR}. {\color{black}This generalization is non-trivial since the existing analysis essentially uses the unique optimal solution under strong convexity as a reference point to bound related error terms, while the PL condition allows for the existence of multiple optimal solutions.}
In comparison with the existing linearly-convergent, decentralized deterministic batch gradient methods under the PL condition~\cite{GT_zero,PL_DPD,improved_DSGT_Xin}, \textbf{\texttt{GT-SAGA}} provably achieves faster linear rate, in terms of the component gradient computation complexity at each node, when the local batch size~$m$ is large, demonstrating the advantage of the employed variance reduction technique. In a big-data regime where~$m$ is large enough, we show that the linear rate of \textbf{\texttt{GT-SAGA}} becomes network topology-independent. See Remarks~\ref{rmk_pl_bdata} and~\ref{rmk_pl_full} for details.

\subsubsection{Convergence analysis} 
We note that our analysis of SAGA-type variance reduction procedures is different from the existing ones~\cite{SAGA_reddi,prox_SAGA_reddi}, which require careful constructions of Lyapunov functions. We avoid such delicate constructions by adopting a direct analysis approach, based on linear time-invariant (LTI) dynamics, which may be of independent interest and perhaps more readily extendable to other non-convex problems. {\color{black}We note that the LTI dynamics-based analysis has mainly been used in convex problems in the existing literature of gradient tracking methods, e.g.,~\cite{harnessing,MP_Pu}.} 
Somewhat surprisingly, a special case of our analysis, i.e., when the network is complete, provides the first linear rate result of the \textit{original} centralized SAGA algorithm~\cite{SAGA} under the PL condition. Indeed, the existing analysis~\cite{SAGA_reddi,prox_SAGA_reddi} is only applicable to a \textit{modified} SAGA, which periodically restarts and samples its iterates; see Remark~\ref{rmk_pl_c} for details. Finally, our analysis is also substantially different from that of the existing decentralized non-convex variance-reduced methods~\cite{D_Get,GT-SARAH}, where the variances of the stochastic gradients are bounded recursively, due to their hybrid nature. In contrast, we introduce a proper auxiliary sequence to bound the variance of \textbf{\texttt{GT-SAGA}}; see Subsection~\ref{sec_bvr},~\ref{sec_t} for~details.

\subsection{Outline of the paper and notation}
\subsubsection{Outline of the paper} Section~\ref{main} presents the \textbf{\texttt{GT-SAGA}} algorithm and the main convergence results.  Section~\ref{exp} illustrates the main theoretical results with the help of numerical simulations. Section~\ref{conv} presents the convergence analysis.
Section~\ref{clu} concludes the paper. 

\subsubsection{Notation} The set of positive real numbers is by~$\mbb{R}^+$. We use lowercase bold letters to denote vectors and uppercase bold letters to denote matrices. The matrix,~$\mb{I}_d$ (resp.~$\mb{O}_d$), represents the~$d\times d$ identity (resp. zero matrix). The vector, $\mb{1}_d$ (resp.~$\mb{0}_d$), is the~$d$-dimensional ones (resp. zeros). 
The Kronecker product of two matrices is denoted by~$\otimes$. We use~$\|\cdot\|$ to denote the $2$-norm of a vector or a matrix. For a matrix~$\mb{X}$, we use~$\rho(\mb{X})$ to denote its spectral radius
and~$\mbox{diag}(\mb{X})$ as the diagonal matrix with the diagonal entries of~$\mb{X}$. Matrix/vector inequalities are stated in the entry-wise sense. We fix a proper probability space~$(\Omega,\F,\P)$ for all random variables in question and~$\mbb{E}[\cdot]$ denotes the expectation; for an event~$A\in\mc{F}$, its indicator is denoted as~$\1_{A}$. We use~$\sigma(\cdot)$ to denote the~$\sigma$-algebra generated by the random variables and/or events. For two quantities~$A,B\in\mathbb{R}^{+}$, we denote~$A \lesssim B$ if there exists a universal~$c$ such that~$A \leq c B$. 

\section{The \textbf{\texttt{GT-SAGA}} algorithm and main results}\label{main}
{\color{black}\textbf{\texttt{GT-SAGA}}~\cite{GTVR}, built upon local SAGA estimators~\cite{SAGA} and global gradient tracking~\cite{NEXT_scutari,GT_CDC}, is formally presented in Algorithm~\ref{GT-SAGA}. We refer the readers to~\cite{GTVR,SPM_Xin} for detailed discussion on the development of \textbf{\texttt{GT-SAGA}}. In this paper, we require for conciseness that all nodes start at the same point, i.e.,~${\x_i^0 = \ol{\x}^0}, {\forall i\in\mc{V}}$. We emphasize that the complexity results of \textbf{\texttt{GT-SAGA}} established in this paper hold, up to factors of universal constants, for the case where the nodes are initialized differently.}  We comment on the practical implementation aspects of \textbf{\texttt{GT-SAGA}} in comparison with the existing approaches in the following remarks.

\begin{rmk}[\textbf{Single-timescale implementation}]\label{rmk_prac}
\normalfont The existing decentralized variance-reduced methods for non-convex optimization~\cite{GT-SARAH,D_Get} are based on a two-timescale, double-loop implementation. Specifically, these methods, within each inner-loop, run a fixed number of stochastic gradient type iterations, while, at each outer-loop iteration, a local batch gradient is computed at each node. This double-loop nature imposes challenges on the practical implementation of the two methods in~\cite{GT-SARAH,D_Get}. First, periodic batch gradient computation incurs a synchronization overhead on the communication network and jeopardizes the actual wall-clock time when the networked nodes have largely heterogeneous computational capabilities. Second, these two methods have an additional parameter to tune, i.e., the length of each inner loop, other than the step-size. Although this parameter maybe be chosen as~$m$~\cite{GT-SARAH}, 
this particular choice may not lead to the best performance in practice.
In sharp contrast, \textbf{\texttt{GT-SAGA}} admits a simple single-timescale implementation since it only evaluates \textit{one} randomly selected component gradient at each iteration. Furthermore, it only has \emph{one} parameter to tune, i.e., the step-size~$\a$. Therefore, \textbf{\texttt{GT-SAGA}} leads to significantly simpler implementation and tuning compared with the existing decentralized non-convex variance-reduced methods~\cite{GT-SARAH,D_Get}, especially over large-scale ad-hoc networks. {\color{black}Finally, we note that \textbf{\texttt{GT-SAGA}} takes two successive communication rounds per iteration to transmit the state and gradient tracker respectively, as in other gradient tracking-based methods, e.g.,~\cite{improved_DSGT_Xin,MP_Pu,GT-SARAH,D_Get}.}
\end{rmk}

\begin{rmk}[\textbf{Storage requirement}]\label{rmk_stor}
\normalfont
{\color{black}To practically implement \textbf{\texttt{GT-SAGA}}, each node~$i$ needs to retain a gradient table $\{\nabla f_{i,j}(\mb{z}_{i,j}^k)\}_{j=1}^m$ of size~${m\times p}$ in general, which may be expensive. However, for certain structured problems, the size of the gradient table can be largely reduced~\cite{SAGA}. For instance, in non-convex generalized linear models~\cite{nonconvex_BC}, each component function takes the form~$f_{i,j}(\x) = \ell(\mb{x}^\top\bds{\theta}_{i,j})$, where~$\ell:\mathbb{R}\rightarrow\mathbb{R}$ is a non-convex loss and $\bds{\theta}_{i,j}$ is the~$j$-th data at the~$i$-th node. Clearly, $\nabla f_{i,j}(\x) = \ell'(\mb{x}^\top\bds{\theta}_{i,j})\bds{\theta}_{i,j}$ and thus each node~$i$ only needs to retain~$\{\ell'(\mb{z}_{i,j}^\top\bds{\theta}_{i,j})\}_{j=1}^m$, a gradient table of size~$m\times 1$, since the data samples~$\{\bds{\theta}_{i,j}\}_{j=1}^m$ are already stored locally. See Section~\ref{exp_BC} for numerical experiments based on one such example.}
\end{rmk}

We now enlist the assumptions of interest in this paper.

\begin{assumption}\label{sample}
The family~$\{\tau_i^k,s_i^k:i\in\mc{V},k\geq0\}$ of random variables in Algorithm~\ref{GT-SAGA} is independent.
\end{assumption}
Assumption~\ref{sample} is standard in stochastic gradient methods.
{\color{black}Specifically, the index~$s_i^k$ used for updating the gradient table $\{\nabla f_{i,j}(\mb{z}_{i,j}^k)\}_{j=1}^m$ is sampled independently from the index~$\tau_i^k$ used for updating the local SAGA estimator~$\mb{g}_i^k$ per node per iteration. This independence requirement is straightforward to implement and is often posed to simplify the analysis of SAGA type estimators for non-convex problems~\cite{SAGA_reddi,prox_SAGA_reddi}; see Section~\ref{sec_t} for analysis based on this assumption.}

\begin{algorithm}[h]\label{gtsaga}
\caption{\textbf{\texttt{GT-SAGA}} at each node~$i$}
\label{GT-SAGA}
\begin{algorithmic}
\Require{$\mb{x}_i^0 = \ol{\mb{x}}^0\in\mbb{R}^{p}$;~$\alpha\in\mbb{R}^+$; $\{\ul{w}_{ir}\}_{r=1}^n$;~$\mb{z}_{i,j}^0 = \mb{x}_i^0,\forall j\in\{1,\cdots,m\}$;~$\mb{y}_i^0 = \mb{0}_p$;~$\mb{g}_i^{-1} = \mb{0}_p$.}
\For{{$k= 0,1,2,\cdots$}}
\State{Select~$\tau_{i}^{k}$ uniformly at random from~$\{1,\cdots,m\}$;}
\State{Update the local stochastic gradient estimator:\begin{align*}\mb{g}_{i}^{k} = \nabla f_{i,\tau_i^{k}}\big(\mb{x}_{i}^{k}\big) - \nabla f_{i,\tau_i^{k}}\big(\mb{z}_{i,\tau_i^{k}}^{k}\big) +\frac{1}{m}\sum_{j=1}^{m}\nabla f_{i,j}\big(\mb{z}_{i,j}^{k}\big);\end{align*}}
\State{Update the local gradient tracker:$$\mb{y}_{i}^{k+1} = \sum_{r=1}^{n}\ul{w}_{ir}\left(\mb{y}_{r}^{k} + \mb{g}_r^{k} - \mb{g}_r^{k-1}\right);$$}
\State{Update the local estimate of the solution:$$\mb{x}_{i}^{k+1} = \sum_{r=1}^{n}\ul{w}_{ir}\left(\mb{x}_{r}^{k} - \alpha\mb{y}_{r}^{k+1}\right);$$}
\State{Select~$s_{i}^{k}$ uniformly at random from~$\{1,\cdots,m\}$;}
\vspace{0.05cm}
\State{Set~$\mb{z}_{i,j}^{k+1} = \mb{x}_{i}^{k}$ for~$j = s_i^{k}$;~$\mb{z}_{i,j}^{k+1} = \mb{z}_{i,j}^{k}$ for~$j \neq s_i^{k}$;}
\vspace{0.05cm}
\EndFor
\end{algorithmic}
\end{algorithm}


\begin{assumption}\label{smooth}
Each component function~${f_{i,j}:\mbb{R}^p\ra\mbb{R}}$ is differentiable and~$L$-smooth, i.e., there exists~$L>0$, such that
$\left\|\nabla f_{i,j}(\mb{x})-\nabla f_{i,j}(\mb{y})\right\|\leq L\left\|\mb x - \mb y\right\|,$ $\forall\mb{x},\mb{y}\in\mbb{R}^p$,
$\forall i\in\mc{V}$, $\forall j\in\{1,\cdots,m\}.$
Moreover, the global function $F$ is bounded below, i.e., $F^* := \inf_{\mb{x}\in\mbb{R}^p}F(\mb{x}) > -\infty$. 
\end{assumption}
Under Assumption~\ref{smooth}, the local batch functions~$\{f_i\}_{i=1}^n$ and the global function~$F$ are~$L$-smooth. {\color{black}We note that~$L$ stated in Assumption~\ref{smooth} is essentially the maximum of the smoothness parameters of all component functions.} We further consider the case when \emph{the global~$F$} additionally satisfies the Polyak-\L ojasiewicz (PL) condition described below.
\begin{assumption}\label{PL}
The global function~$F:\R^p\ra\R$ satisfies
$2\mu(F(\x) - F^*) \leq \|\nabla F(\x) \|^2$, $\forall \mb{x}\in\mbb{R}^p$, for some~${\mu>0}$.    
\end{assumption}
{\color{black}The PL condition, originally introduced in~\cite{polyak1987introduction}, generalizes the notion of strong convexity to non-convex functions; see~\cite{PL_1} for more discussion.} 
When Assumption~\ref{PL} holds, we denote~$\kappa: = \frac{L}{\mu}\geq1$, which may be interpreted as the condition number of~$F$.
Note that the PL condition implies that every stationary point~$\x^*$ of~$F$, such that~$\nabla F(\mb x^*)=\mb 0_p$, is a global minimizer of~$F$, while~$F$ is not necessarily convex. 

\begin{assumption}\label{network}
The weight matrix $\ul{\W}=\{\ul{w}_{ir}\}\in\mathbb{R}^{n \times n}$ of the network is primitive and doubly-stochastic, i.e., ${\ul{\W}\mb{1}_n = \mb{1}_n}$, ${\mb{1}_n^\top \ul{\W} = \mb{1}_n^\top},$ and $\lambda := \lambda_2(\ul{\W})\in [0,1)$, where~$\lambda_2(\W)$ is the second largest singular value of $\ul{\W}$.
\end{assumption}
Weight matrices that satisfy Assumption~\ref{network} may be designed for strongly-connected, weight-balanced, directed networks or for connected, undirected networks. 
We next discuss the performance metrics of \textbf{\texttt{GT-SAGA}} for different problem classes. For general smooth non-convex problems, we define the iteration complexity of \textbf{\texttt{GT-SAGA}} as the minimum number of iterations required to achieve an~$\epsilon$-accurate stationary point of the global function~$F$, i.e.,
\begin{align*}
\inf\left\{K: \frac{1}{n}\sum_{i=1}^n\frac{1}{K}\sum_{k=0}^{K-1}\E\big[\|\nabla F(\x_i^k)\|^2\big] \leq \epsilon\right\}.
\end{align*}
When the global function~$F$ further satisfies the PL condition, we define the iteration complexity of~\textbf{\texttt{GT-SAGA}} as
\begin{align*}
\inf\left\{k : \E\left[\frac{1}{n}\sum_{i=1}^n\left(F(\x_i^k) - F^*\right)\right] \leq \epsilon\right\}.
\end{align*}
These are standard metrics for decentralized stochastic non-convex optimization methods~\cite{DSGD_NIPS,improved_DSGT_Xin,GT-SARAH,D_Get}. We refer the iteration complexity as the the convergence rate metric of \textbf{\texttt{GT-SAGA}}, since it is the same as the communication and component gradient computation complexity at each node. We are now ready to state the main results of \textbf{\texttt{GT-SAGA}} in the next subsections and discuss their implications.

\subsection{General smooth non-convex functions}
In this subsection, we present the main convergence results of~\textbf{\texttt{GT-SAGA}} for general smooth non-convex functions. 

\begin{theorem}\label{main_ncvx}
Let Assumptions~\ref{sample},~\ref{smooth}, and~\ref{network} hold. If the step-size~$\a$ of \textbf{\texttt{GT-SAGA}} satisfies~$0<\a\leq\ol{\a}_1$, where $$\ol{\a}_1:=\min\left\{\frac{(1-\lambda^2)^2}{48\lambda},\frac{2n^{1/3}}{13m^{2/3}},\frac{1}{2},\frac{(1-\lambda^2)^{3/4}}{18\lambda^{1/2}m^{1/2}}\right\}\frac{1}{L},$$ then all nodes asymptotically agree on a stationary point
in both mean-squared and almost sure sense, i.e.,~$\forall i,r\in\mc{V}$,
\begin{align*}
&\P\Big(\lim_{k\rightarrow\infty}\|\x_i^k - \x_r^k\| = 0\Big)=1, \quad
\lim_{k\rightarrow\infty}\E\big[\|\x_i^k - \x_r^k\|^2\big] = 0, \\
&\P\Big(\lim_{k\rightarrow\infty}\|\nabla F(\x_i^k)\| = 0\Big)=1, \quad
\lim_{k\rightarrow\infty}\E\big[\|\nabla F(\x_i^k)\|^2\big] = 0.
\end{align*}
Moreover, if~${\a=\ol{\a}_1}$, \textbf{\texttt{GT-SAGA}} achieves an~$\epsilon$-accurate stationary point in
\begin{align}\label{main0_ncvx}
\mc{O}\left(\frac{EL(F(\ol{\x}^0)-F^*)}{\epsilon} + \frac{\lambda^2(1-\lambda^2)\|\nf(\x^0)\|^2}{n\epsilon}\right)
\end{align}
iterations, where~$E$ is given by 
\begin{align*}
E := \max\left\{\frac{m^{2/3}}{n^{1/3}},1,\frac{\lambda}{(1-\lambda)^2},\frac{\lambda^{1/2}m^{1/2}}{(1-\lambda)^{3/4}}\right\}
\end{align*}
and~$\|\nf(\x^0)\|^2 = \sum_{i=1}^n\|\nabla f_i(\ol{\x}^0)\|^2$.
\end{theorem}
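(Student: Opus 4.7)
The plan is to prove Theorem~\ref{main_ncvx} by constructing a joint linear time-invariant (LTI) inequality system in four coupled error quantities, using the approach highlighted in the paper's contributions. The quantities I would track are: (i) the consensus error $C^k := \sum_{i=1}^n \E\|\mb{x}_i^k - \ol{\mb{x}}^k\|^2$, (ii) the gradient tracking error $T^k := \sum_{i=1}^n \E\|\mb{y}_i^k - \ol{\mb{y}}^k\|^2$, (iii) an auxiliary sequence $U^k := \tfrac{1}{mn}\sum_{i,j}\E\|\nabla f_{i,j}(\mb{z}_{i,j}^k) - \nabla f_{i,j}(\ol{\mb{x}}^k)\|^2$ designed to control the SAGA variance, and (iv) the function-value progress $\E[F(\ol{\mb{x}}^k) - F^*]$. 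The overall strategy mirrors decentralized gradient-tracking analyses, but replaces the usual Lyapunov construction for SAGA with a direct LTI bound that leverages the conditional unbiasedness of the SAGA estimator.

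First I would derive a descent-type inequality for $\E[F(\ol{\mb{x}}^{k+1})]$: by $L$-smoothness of $F$ and the gradient tracking identity $\ol{\mb{y}}^{k+1} = \tfrac{1}{n}\sum_i \mb{g}_i^k$, conditional unbiasedness gives $\E[\ol{\mb{y}}^{k+1}\mid\F^k] = \tfrac{1}{n}\sum_i \nabla f_i(\mb{x}_i^k)$. Applying Young's inequality and expanding $\|\tfrac{1}{n}\sum_i\nabla f_i(\mb{x}_i^k) - \nabla F(\ol{\mb{x}}^k)\|^2$ via $L$-smoothness yields
\begin{equation*}
\E[F(\ol{\mb{x}}^{k+1})] \le \E[F(\ol{\mb{x}}^k)] - \tfrac{\a}{2}\E\|\nabla F(\ol{\mb{x}}^k)\|^2 + \tfrac{\a L^2}{n}C^k + \tfrac{L\a^2}{2}\E\|\ol{\mb{y}}^{k+1}\|^2,
\end{equation*}
and the last term splits into a bias piece (involving $\|\nabla F(\ol{\mb{x}}^k)\|^2$ and $C^k$) plus a SAGA variance piece controlled by $U^k$.

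Next, I would set up the three contraction inequalities. Using the doubly-stochastic primitive weight matrix and $\lambda = \lambda_2(\ul{\W})$, standard manipulations give $C^{k+1} \le \tfrac{1+\lambda^2}{2}C^k + c_1\a^2 T^{k+1}$ and $T^{k+1} \le \tfrac{1+\lambda^2}{2}T^k + c_2 \sum_i \E\|\mb{g}_i^k - \mb{g}_i^{k-1}\|^2$, where the latter increment is bounded via $L$-smoothness by $C^k$, $C^{k-1}$, $U^k$, and a $\|\nabla F(\ol{\mb{x}}^k)\|^2$ term coming from $\|\ol{\mb{x}}^k - \ol{\mb{x}}^{k-1}\|^2 = \a^2\|\ol{\mb{y}}^k\|^2$. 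For the SAGA sequence, exploiting that $\mb{z}_{i,j}^{k+1} = \mb{x}_i^k$ with probability $1/m$ and otherwise equals $\mb{z}_{i,j}^k$, a direct computation gives $U^{k+1} \le (1 - \tfrac{1}{m})U^k + \tfrac{L^2}{m}(\tfrac{C^k}{n} + \E\|\ol{\mb{x}}^{k+1} - \ol{\mb{x}}^k\|^2 + \text{terms involving }\|\nabla F\|^2)$, again reducible to the basic quantities.

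Stacking these yields a vector inequality $\mb{v}^{k+1} \le \G(\a)\mb{v}^k + \a^2 \mb{h}^k$, with $\mb{h}^k$ capturing the $\|\nabla F(\ol{\mb{x}}^k)\|^2$ driving term. The step-size range $\ol{\a}_1$ in the statement is precisely the range over which $\rho(\G(\a)) < 1$ and an explicit inverse $(\I - \G(\a))^{-1}$ admits a benign bound; each of the four arguments of the $\min$ in $\ol{\a}_1$ comes from balancing one contraction inequality (the $m^{2/3}/n^{1/3}$ term from $U^k$, the $\lambda$-terms from consensus and tracking). Telescoping the descent inequality after substituting these bounds, with the additional consensus and variance terms absorbed by the negative $\tfrac{\a}{2}\|\nabla F\|^2$ via the step-size choice, produces $\tfrac{1}{K}\sum_k\E\|\nabla F(\ol{\mb{x}}^k)\|^2 \lesssim \tfrac{F(\ol{\mb{x}}^0) - F^*}{\a K}$ plus a transient from initial gradients; combining with $\tfrac{1}{n}\sum_i\E\|\nabla F(\mb{x}_i^k)\|^2 \le 2\E\|\nabla F(\ol{\mb{x}}^k)\|^2 + 2L^2 C^k/n$ and setting $\a = \ol{\a}_1$ yields~\eqref{main0_ncvx}. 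Finally, almost sure convergence follows by recognizing the descent inequality (combined with the summability of the error terms inherited from $\rho(\G) < 1$) as a Robbins--Siegmund nonnegative almost-supermartingale, which gives $\sum_k \|\nabla F(\ol{\mb{x}}^k)\|^2 < \infty$ a.s., hence $\|\nabla F(\mb{x}_i^k)\|\to 0$ a.s.; consensus almost-sure convergence follows similarly from the summability of $C^k$. The main obstacle I anticipate is bounding $U^k$ tightly enough to separate the effect of topology ($\lambda$) from the effect of the local batch size $m$ in the step-size, so that the big-data regime yields a topology-independent rate as advertised; achieving this requires carefully exploiting the conditional unbiasedness of the SAGA gradient at $\mb{x}_i^k$ rather than at $\ol{\mb{x}}^k$, which avoids introducing extra consensus error into the variance recursion.
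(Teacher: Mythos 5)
Your proposal follows essentially the same route as the paper's proof: a descent inequality on $F(\ol{\mb{x}}^k)$, three coupled contraction inequalities (consensus, gradient tracking, and a SAGA auxiliary sequence --- the paper tracks $t^k$, the mean squared distance of the table points $\mb{z}_{i,j}^k$ to $\ol{\mb{x}}^k$, which is equivalent to your gradient-based $U^k$ up to the factor $L^2$) stacked into an LTI system driven by $\|\ol{\nabla\mb{f}}(\mb{x}^k)\|^2$, an entrywise bound on $(\mb{I}-\mb{G}_\a)^{-1}$, telescoping with absorption of the driving terms into the negative descent term, and almost-sure convergence from summability of the expected errors (the paper uses Tonelli rather than Robbins--Siegmund, but both work). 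The one place your sketch is slightly too optimistic is the $U^k$ recursion: since the reference point drifts from $\ol{\mb{x}}^k$ to $\ol{\mb{x}}^{k+1}$ in \emph{all} $m$ table entries, the drift cannot be damped by a $1/m$ factor and must instead be handled by a Young's-inequality cross term with a free parameter, which inflates the contraction factor to $(1-\tfrac{1}{m})(1+\a\beta)$ exactly as in the paper's Lemma on the auxiliary sequence.
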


Theorem~\ref{main_ncvx} is formally proved in Subsection~\ref{sec_Th1_proof}. We discuss its implications in the following remarks.
\begin{rmk}[\textbf{Effect of the function heterogeneity}]\label{rmk_hete}
\normalfont We note that $\|\nf(\x^0)\|^2/n$ in the second term of~\eqref{main0_ncvx} can be viewed as a measure of heterogeneity among the local functions. In particular, when all local functions are identical such that ${f_i = f_r = F}, {\forall i,r\in\mc{V}}$, this term diminishes, i.e., it can be shown that $\|\nf(\x^0)\|^2/n = \|\nabla F(\ol{\x}^0)\|^2 \leq 2L(F(\ol{\x}^0) - F^*)$. On the other hand, when the local functions are significantly different, $\|\nf(\x^0)\|^2/n$ can be fairly large compared with $L(F(\ol{\x}^0) - F^*)$. Based on Theorem~\ref{main_ncvx}, it is important to note that the effect of the function heterogeneity $\|\nf(\x^0)\|^2/n$ on the convergence rate of \textbf{\texttt{GT-SAGA}} is decoupled from~$E$, the effect of the local batch size $m$ and the network spectral gap $1-\lambda$. It is further interesting to observe that the heterogeneity effect diminishes when the network is sufficiently either well-connected or weakly-connected. In other words, the function heterogeneity effect is dominated by the network effect in these two extreme cases of interest.     
\end{rmk}

We next view Theorem~\ref{main_ncvx} in two different regimes. 

\begin{rmk}[\textbf{Big-data regime}]\label{rmk_bdata}
\normalfont
We first consider a big-data regime that is often applicable in data centers, where the local batch size~$m$ is relatively large compared with the network spectral gap inverse~$(1-\lambda)^{-1}$ and the number of the nodes~$n$. In particular, if~$m$ large enough such that
\begin{align}\label{bdata_con}
\max\left\{1,\frac{\lambda}{(1-\lambda)^2},\frac{\lambda^{1/2}m^{1/2}}{(1-\lambda)^{3/4}}\right\} \lesssim \frac{m^{2/3}}{n^{1/3}},
\end{align}
Theorem~\ref{main_ncvx} results into an iteration complexity of
\begin{align}\label{bdata}
\mc{O}\left(\frac{m^{2/3}L(F(\ol{\x}^0)-F^*)}{n^{1/3}\epsilon} + \frac{\lambda^2(1-\lambda^2)\|\nf(\x^0)\|^2}{n\epsilon}\right).            
\end{align}

We emphasize that the first term in~\eqref{bdata} matches the iteration complexity of the centralized SAGA with a minibatch size~$n$~\cite{SAGA_reddi}, as~\textbf{\texttt{GT-SAGA}} computes~$n$ component gradients across the nodes in parallel at each iteration.
We note that under the big-data condition~\eqref{bdata_con}, it typically holds that $\|\nf(\x^0)\|^2/n \lesssim m^{2/3}L(F(\ol{\x}^0)-F^*)/n^{1/3}$, i.e., the first term dominates the second term in~\eqref{bdata}. Therefore, \textbf{\texttt{GT-SAGA}} in this regime achieves a non-asymptotic linear speedup, i.e., the total number of component gradient computations required at each node to achieve an $\epsilon$-accurate stationary point is reduced by a factor of~$1/n$, compared with the centralized minibatch SAGA that operates on a single machine.
\end{rmk}

\begin{rmk}[\textbf{Large-scale network regime}]\label{rmk_network}
\normalfont
{\color{black}We now consider the case where a large number of nodes are weakly connected, a scenario that commonly appears in sensor networks, robotic swarms, and ad hoc IoT (Internet of Things) networks. In this case, the number of the nodes~$n$ and the network spectral gap inverse~$(1-\lambda)^{-1}$ are relatively large in comparison with the local batch size~$m$. In particular, if
\begin{align}\label{l_net}
\max\left\{1,\frac{m^{2/3}}{n^{1/3}},\frac{\lambda^{1/2}m^{1/2}}{(1-\lambda)^{3/4}}\right\} \lesssim \frac{\lambda}{(1-\lambda)^2}, 
\end{align}
then the component gradient computation complexity at each node of \textbf{\texttt{GT-SAGA}}, according to Theorem~\ref{main_ncvx}, becomes
\begin{align}\label{saga_ar}
\mc{O}\left(\frac{\lambda L(F(\ol{\x}^0)-F^*)}{(1-\lambda)^2\epsilon} + \frac{\lambda^2(1-\lambda^2)\|\nf(\x^0)\|^2}{n\epsilon}\right).    
\end{align}
We note that the component gradient complexity at each node of~GT-SARAH~\cite{GT-SARAH}, the state-of-the-art decentralized non-convex variance-reduced method, in this regime is
\begin{align}\label{sarah_ar}
\mc{O}\left(\frac{\lambda}{(1-\lambda)^2\epsilon}\left(L(F(\ol{\x}^0)-F^*) + \frac{\|\nf(\x^0)\|^2}{n}\right)\right).    
\end{align}
Comparing~\eqref{sarah_ar} to~\eqref{saga_ar}, we observe that GT-SARAH, unlike \textbf{\texttt{GT-SAGA}}, does not achieve a separation between the dependence of the network spectral gap ${1-\lambda}$ and the function heterogeneity measure~$\|\nf(\x^0)\|^2/n$ on the convergence rate. We hence conclude that \textbf{\texttt{GT-SAGA}} outperforms GT-SARAH if the network is weakly connected and the local functions are largely heterogeneous, i.e., when ${1-\lambda}$ is small and~$\|\nf(\x^0)\|^2/n$ is large. Moreover, we recall from Remark~\ref{rmk_prac} that \textbf{\texttt{GT-SAGA}} is single-timescale and thus is much easier to implement than the two-timescale GT-SARAH over large-scale networks.} We also emphasize that the storage requirement of \textbf{\texttt{GT-SAGA}} in this regime is significantly relaxed since the data samples are distributed across a large network, leading to a small local batch size~$m$ at each node.
\end{rmk}

\subsection{Global PL condition}
\begin{theorem}\label{main_PL}
Let Assumptions~\ref{sample},~\ref{smooth},~\ref{PL}, and~\ref{network} hold. If the step-size~$\a$ of \textbf{\texttt{GT-SAGA}} satisfies~$0<\a\leq\ol{\a}_2$, where
\begin{align*}
\ol{\a}_2 := \min\bigg\{&\frac{(1-\lambda^2)^2}{55\lambda L}, \frac{1-\lambda^2}{13\lambda\kappa^{1/4}L},\frac{(1-\lambda^2)^3}{388\lambda^2nL}, \\
&\frac{n^{1/3}}{10.5m^{2/3}\kappa^{1/3}L},\frac{1}{36L},\frac{1-\lambda^2}{2\mu},\frac{1}{4m\mu}\bigg\},    
\end{align*}
then all nodes converge linearly at the rate~$\mc{O}((1-\mu\a)^k)$ to a global minimizer of~$F$. In particular, if ${\a =\ol{\a}_2}$, then all nodes agree on an~$\epsilon$-accurate global minimizer in $$\mc{O}\left(\max\Big\{Q_{\text{opt}},Q_{\text{net}}\Big\}\log\frac{1}{\epsilon}\right)$$ iterations, where~$Q_{\text{opt}}$ and~$Q_{\text{net}}$ are given respectively by
\begin{align*}
&Q_{\text{opt}} := \max\left\{\frac{m^{2/3}\kappa^{4/3}}{n^{1/3}},\kappa,m\right\},
\\
&Q_{\text{net}} := \max\left\{\frac{\lambda\kappa}{(1-\lambda)^2},\frac{\lambda\kappa^{5/4}}{1-\lambda},\frac{\lambda^2n\kappa}{(1-\lambda)^3},\frac{1}{1-\lambda}\right\}.
\end{align*}
\end{theorem}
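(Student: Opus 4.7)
The plan is to carry out a linear time-invariant (LTI) dynamics analysis on a carefully chosen vector of expected error quantities and close the loop via the PL condition. Specifically, I would track at each iteration $k$: the global optimality gap $\E[F(\ol{\x}^k) - F^*]$ with $\ol{\x}^k := \tfrac{1}{n}\sum_i \x_i^k$; the consensus error $\E\|\mb{x}^k - \mb{1}_n\otimes\ol{\x}^k\|^2$; the gradient-tracking dispersion $\E\|\mb{y}^k - \mb{1}_n\otimes\ol{\mb{y}}^k\|^2$; and an auxiliary SAGA variance proxy $U^k$ that upper-bounds $\tfrac{1}{mn}\sum_{i,j}\E\|\nabla f_{i,j}(\mb{z}_{i,j}^k) - \nabla f_{i,j}(\x_i^k)\|^2$, built as anticipated in Subsection~\ref{sec_bvr}. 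The conditional unbiasedness $\E[\g_i^k \mid \F^k] = \nabla f_i(\x_i^k)$ of the SAGA estimator would be invoked repeatedly so that cross-terms average to zero and variance bounds stay tight.

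First, I would derive a one-step inequality on the optimality gap from $L$-smoothness of $F$ and the average dynamics $\ol{\x}^{k+1} = \ol{\x}^k - \a\ol{\mb{y}}^{k+1}$, which, via gradient tracking, gives $\ol{\mb{y}}^{k+1} = \tfrac{1}{n}\sum_i \g_i^k$ with conditional mean $\tfrac{1}{n}\sum_i\nabla f_i(\x_i^k)$; smoothness turns the resulting quantity into $\nabla F(\ol{\x}^k)$ up to a consensus drift controlled by $L^2\,\E\|\mb{x}^k - \mb{1}_n\otimes\ol{\x}^k\|^2$. Invoking the PL condition on the remaining $\|\nabla F(\ol{\x}^k)\|^2$ term yields the crucial contraction factor $(1-\mu\a)$ on $\E[F(\ol{\x}^k)-F^*]$, modulo additive terms in the consensus error and $U^k$. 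Second, standard gradient-tracking arguments exploiting the contractivity of $\ul{\W} - \tfrac{1}{n}\mb{1}_n\mb{1}_n^\top$ at rate $\lambda$ produce linear recursions for the consensus and tracker dispersions, with forcing terms of order $\a^2 L^2$ times $U^k$ and $\E\|\nabla F(\ol{\x}^k)\|^2 \le 2L\,\E[F(\ol{\x}^k)-F^*]$. Third, the randomized replacement rule $\mb{z}_{i,s_i^k}^{k+1}=\x_i^k$ together with a Young-type splitting yields a recursion of the form $U^{k+1} \le (1-\tfrac{1}{2m})U^k + c\,L^2\bigl(\E\|\mb{x}^{k+1}-\mb{x}^k\|^2 + \E\|\mb{x}^k - \mb{1}_n\otimes\ol{\x}^k\|^2\bigr)$ for an absolute constant $c$.

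These four one-step bounds then assemble into a nonnegative vector inequality $\mb{u}^{k+1} \le \mb{M}(\a)\,\mb{u}^k$, where $\mb{M}(\a)$ has explicit entries in $\a, L, \mu, \lambda, m, n$. The key algebraic step is to exhibit a strictly positive vector $\mb{v}$ such that $\mb{M}(\a)\mb{v} \le (1-\mu\a)\,\mb{v}$ entry-wise for all $\a\in(0,\ol{\a}_2]$; each of the four component inequalities imposes one family of upper bounds on $\a$, and their minimum collectively reproduces the seven terms defining $\ol{\a}_2$. Iterating gives $\mb{u}^k \le (1-\mu\a)^k\,\mb{u}^0$ component-wise, and lifting from $\ol{\x}^k$ to each $\x_i^k$ via the consensus error contribution delivers the claimed per-node linear convergence. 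Setting $\a=\ol{\a}_2$ and solving $(1-\mu\a)^K \le \ve$ translates each of the seven reciprocals $1/(\mu\ol{\a}_2)$ into either a $Q_{\text{opt}}$ or a $Q_{\text{net}}$ term, yielding the $\mc{O}\bigl(\max\{Q_{\text{opt}},Q_{\text{net}}\}\log(1/\ve)\bigr)$ complexity.

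I expect the main obstacle to be the non-uniqueness of minimizers under PL, which blocks the textbook strongly-convex tactic of tracking $\|\ol{\x}^k - \x^*\|^2$ and forces a direct recursion for $F(\ol{\x}^k)-F^*$. In particular, the cross term arising in the descent lemma must be split with a Young-type inequality whose parameter is tuned to $\mu$, so as to preserve the $(1-\mu\a)$ rate rather than weaken it. A secondary difficulty is ensuring that the SAGA timescale $(1-1/(2m))$ and the PL timescale $(1-\mu\a)$ compose into a single joint contraction at rate $(1-\mu\a)$ instead of the trivial $\min\{1-1/(2m),1-\mu\a\}$: this is exactly where the step-size constraint $\a \le 1/(4m\mu)$ in $\ol{\a}_2$ enters, forcing $\mu\a \le 1/(4m)$ so that the SAGA contraction dominates and cleanly absorbs the $U^k$ drift inside the LTI system.
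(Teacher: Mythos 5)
Your proposal follows essentially the same route as the paper: a PL-refined descent inequality on $F(\ol{\x}^k)-F^*$, contraction recursions for the consensus error, the gradient-tracker dispersion, and a SAGA staleness proxy, assembled into a four-dimensional nonnegative LTI system whose spectral radius is certified to be at most $1-\Theta(\mu\a)$ by exhibiting a positive vector $\mb{s}$ with $\H_\a\mb{s}\leq(1-\tfrac{\mu\a}{2})\mb{s}$, with the seven step-size constraints emerging from the componentwise inequalities exactly as you anticipate. The only cosmetic difference is that the paper's auxiliary sequence is the iterate-distance quantity $t^k=\tfrac{1}{n}\sum_i\tfrac{1}{m}\sum_j\|\ol{\x}^k-\mb{z}_{i,j}^k\|^2$ anchored at the network mean rather than your gradient-difference proxy $U^k$; the two are interchangeable via $L$-smoothness and lead to the same recursion structure, including the role of $\a\leq\tfrac{1}{4m\mu}$ in keeping the $1-\Theta(1/m)$ SAGA contraction dominant over the $1-\mu\a$ rate.
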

Theorem~\ref{main_PL} is formally proved in Subsection~\ref{PL_sec}. 
The following remarks discuss a few key aspects of it.

\begin{rmk}[\textbf{Linear rate under the global PL condition}]\label{rmk_pl_bdata}
\normalfont Theorem~\ref{main_PL} shows that \textbf{\texttt{GT-SAGA}} linearly converges to an optimal solution when the global~$F$ additionally satisfies the PL condition. This is the first linear rate result for decentralized variance-reduced methods under the PL condition while the existing ones require strong convexity, e.g.,~\cite{GTVR,DAVRG,DSA,AccDVR,Network-DANE}. 
A notable feature of the linear rate in Theorem~\ref{main_PL} is that the effects of the local batch size $m$ and the network spectral gap $1-\lambda$ are decoupled. Hence, in a big-data regime where the local batch size $m$ is sufficiently large such that $Q_{\text{net}}\lesssim Q_{\text{opt}}$, $\textbf{\texttt{GT-SAGA}}$ achieves a network topology-independent rate of $\mc{O}(Q_{\text{opt}}\log\frac{1}{\epsilon})$. In addition, we note that
Theorem~\ref{main_PL} implies the linear rate of \textbf{\texttt{GT-SAGA}} in the almost sure sense under the PL condition, by Chebyshev's inequality and the Borel-Cantelli lemma; see Lemma 7 in~\cite{GTVR} for details.
\end{rmk}

\begin{rmk}[\textbf{Comparison with other decentralized gradient methods}]\label{rmk_pl_full}
\normalfont  When the local batch size~$m$ is relatively large, the linear rate of \textbf{\texttt{GT-SAGA}} improves that of the existing decentralized batch gradient methods~\cite{GT_zero,PL_DPD,improved_DSGT_Xin} under the PL condition in terms of the component gradient computation complexity. Moreover, decentralized online stochastic gradient methods, e.g.,~\cite{improved_DSGT_Xin,PD_SGD}, only exhibit sublinear rate under the PL condition due to the persistent variances of the stochastic gradients. Therefore, \textbf{\texttt{GT-SAGA}} achieves faster convergence under the PL condition compared with the existing decentralized methods, demonstrating the advantage of the employed SAGA variance reduction scheme that is able to exploit the finite-sum structure of local functions. 
\end{rmk}

\begin{rmk}[\textbf{Improved convergence results for the centralized minibatch SAGA}]
\label{rmk_pl_c}
\normalfont {\color{black}When~$\lambda = 0$, i.e., when the underlying network is a complete graph whose weight matrix can be easily chosen as~$\ul{\W}= \frac{1}{n}\mb{1}_n\mb{1}_n^\top$,} \textbf{\texttt{GT-SAGA}} reduces to the centralized minibatch SAGA and achieves the linear rate of~$\mc{O}(Q_{\text{opt}}\log\frac{1}{\epsilon})$. Hence, a special case of Theorem~\ref{main_PL}, i.e,~$\lambda = 0$, provides the first linear rate result under the PL condition for the centralized SAGA. Indeed, the existing linear rate results~\cite{SAGA_reddi,prox_SAGA_reddi} under the PL condition are only applicable to a modified SAGA that \emph{periodically restarts}~$\mc{O}(\log\frac{1}{\epsilon})$ times with the output of each cycle being selected randomly from the past iterates in this cycle. This procedure is not feasible particularly in decentralized settings. In contrast, the linear rate shown Theorem~\ref{main_PL} is on the \emph{last iterate} of the original SAGA without periodic restarting and sampling. 
\end{rmk}

\section{Numerical experiments} \label{exp}
\begin{figure*}
\centering
\includegraphics[width=1.753in]{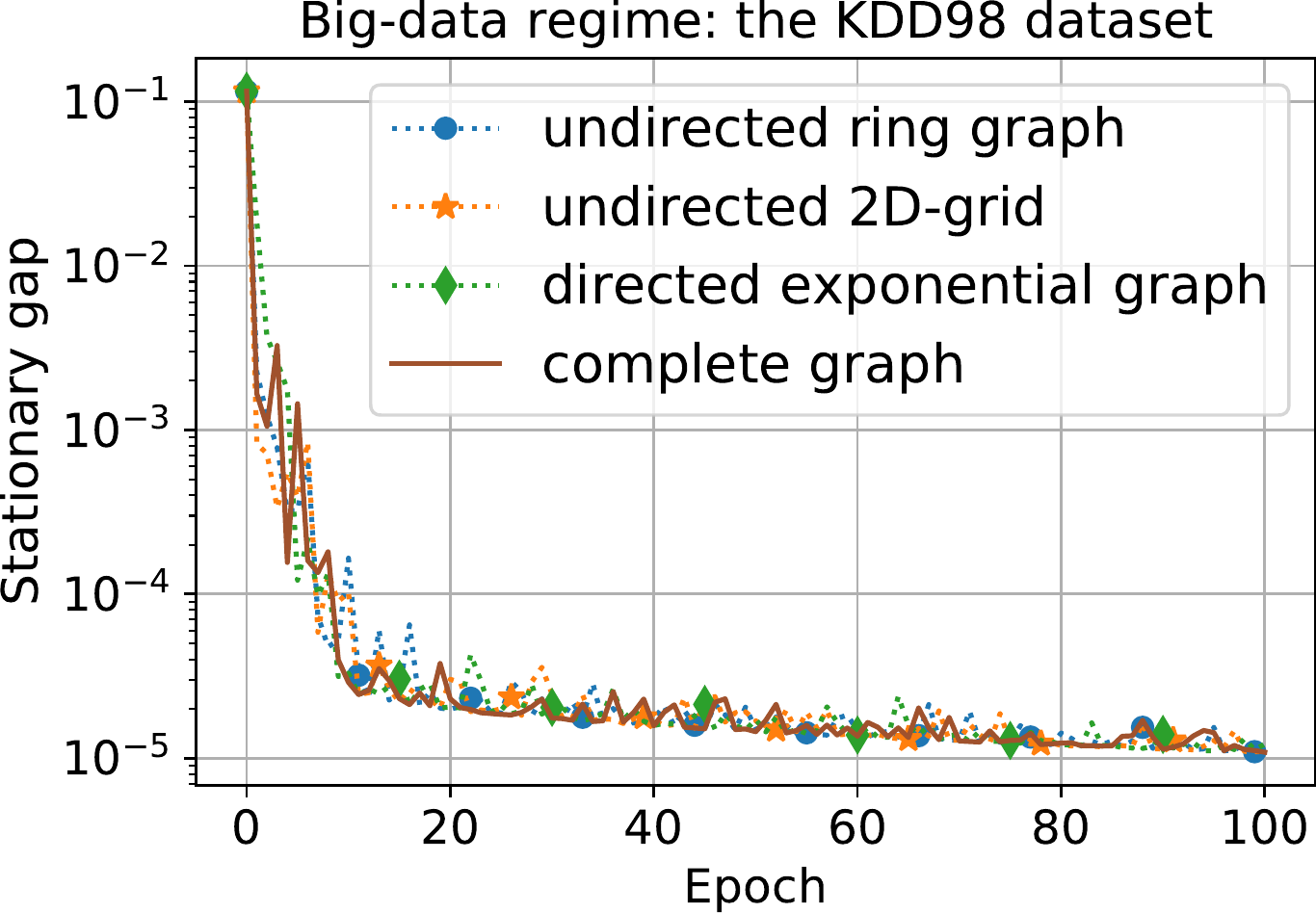}
\includegraphics[width=1.753in]{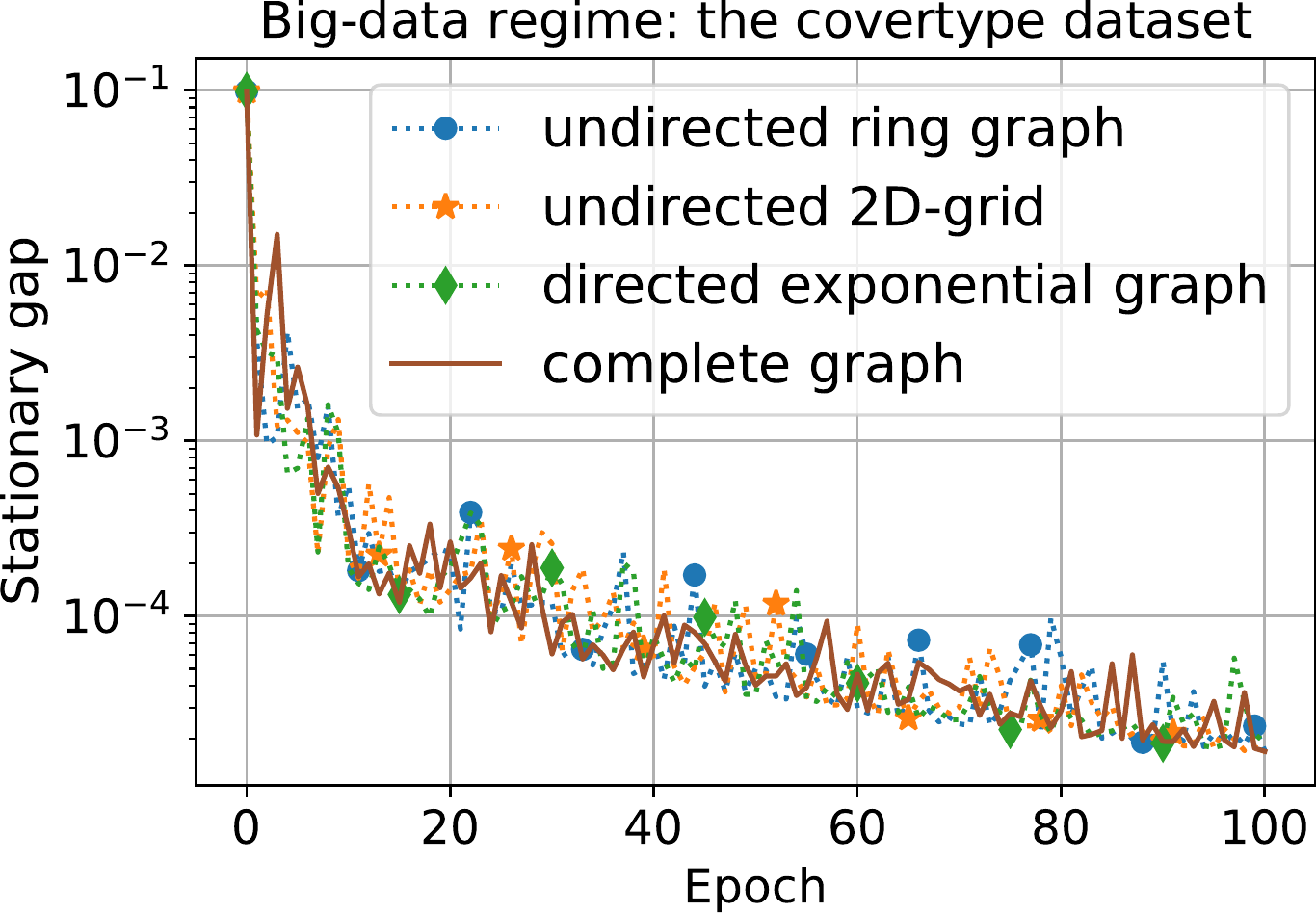}
\includegraphics[width=1.753in]{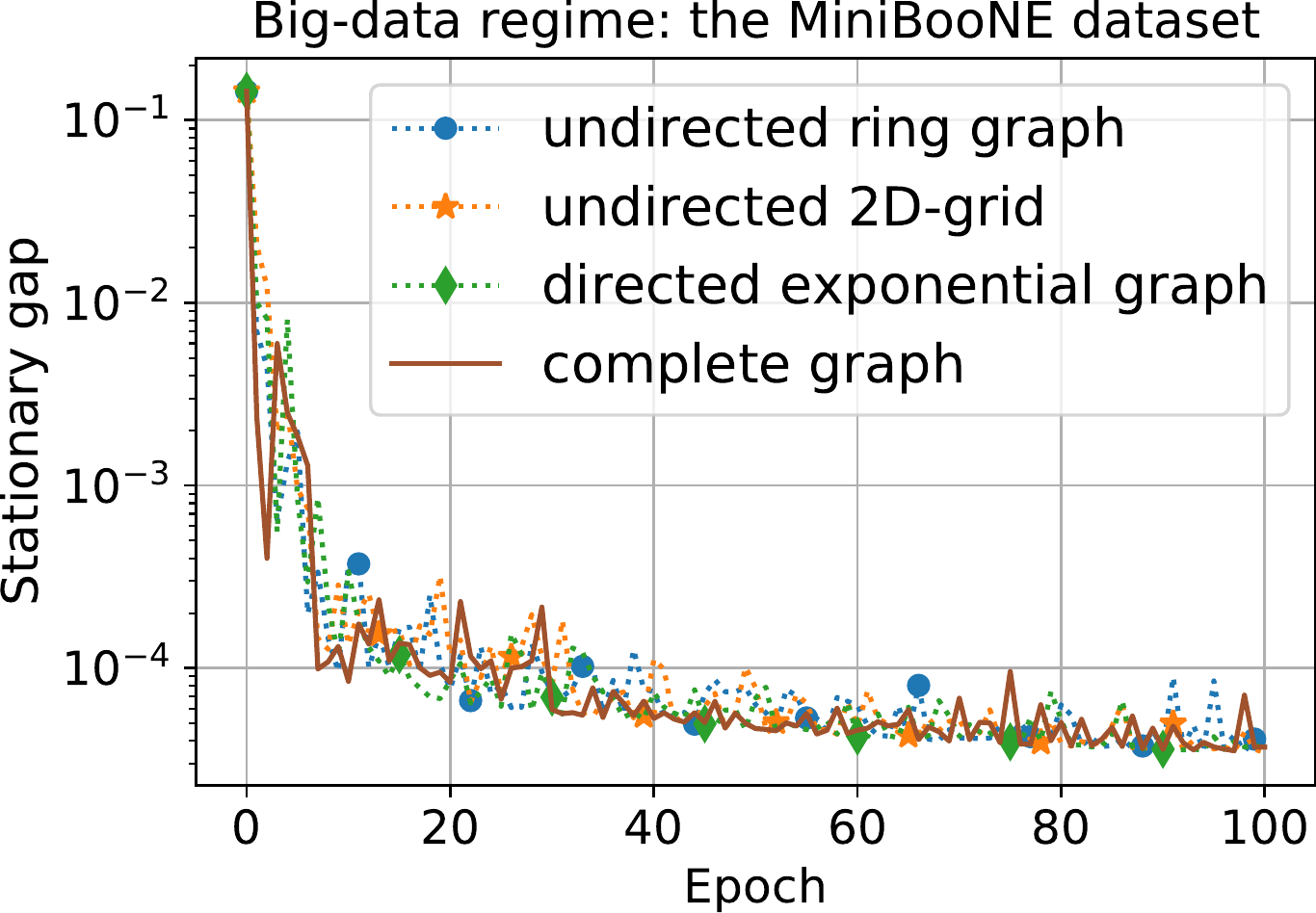}
\includegraphics[width=1.753in]{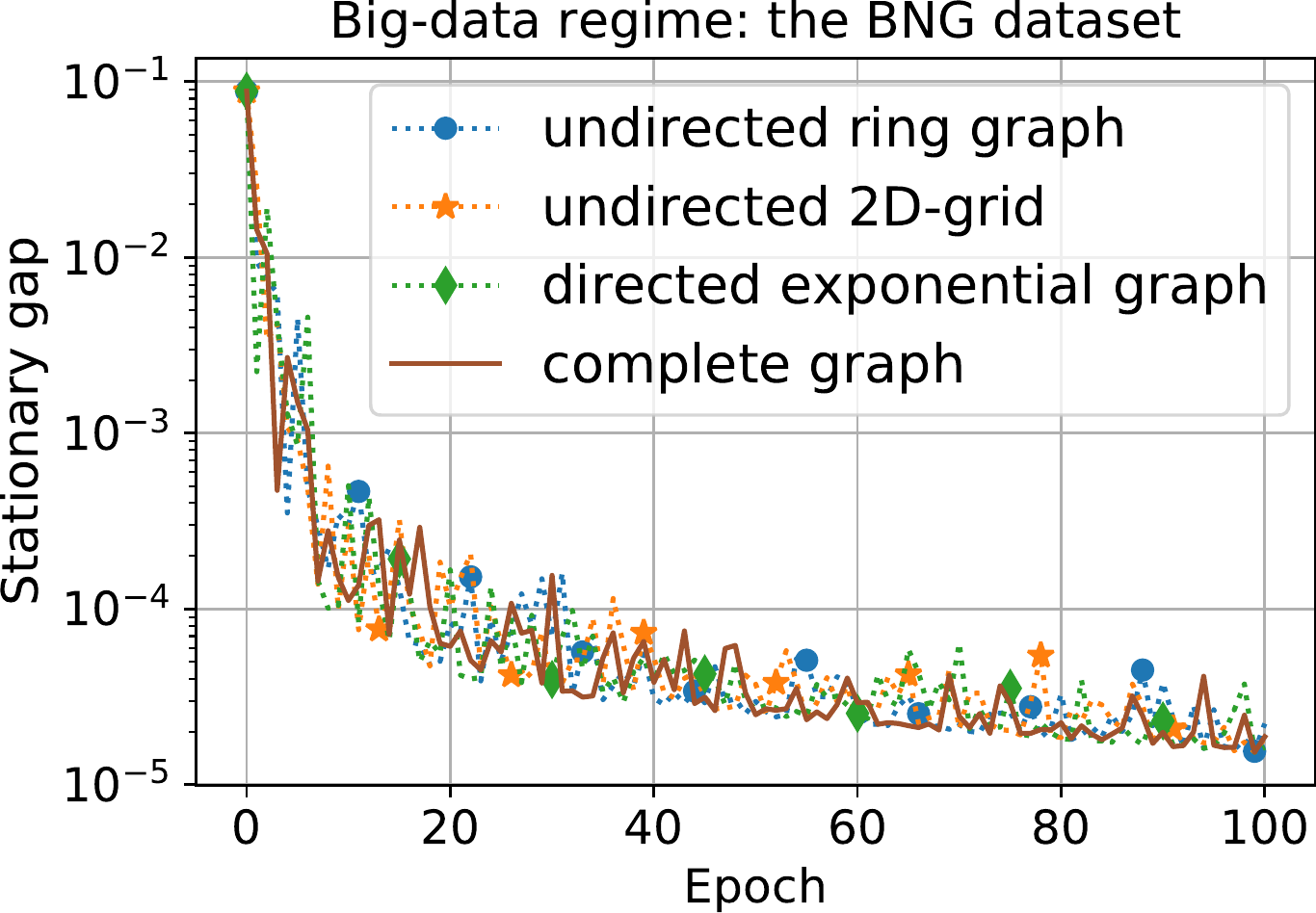}
\caption{Big data regime: the network topology-independent convergence rate of \textbf{\texttt{GT-SAGA}} on the KDD98, covertype, MiniBooNE, and BNG(sonar) datasets.}
\label{bdata_simul}
\end{figure*}

\begin{figure*}
\centering
\includegraphics[width=1.753in]{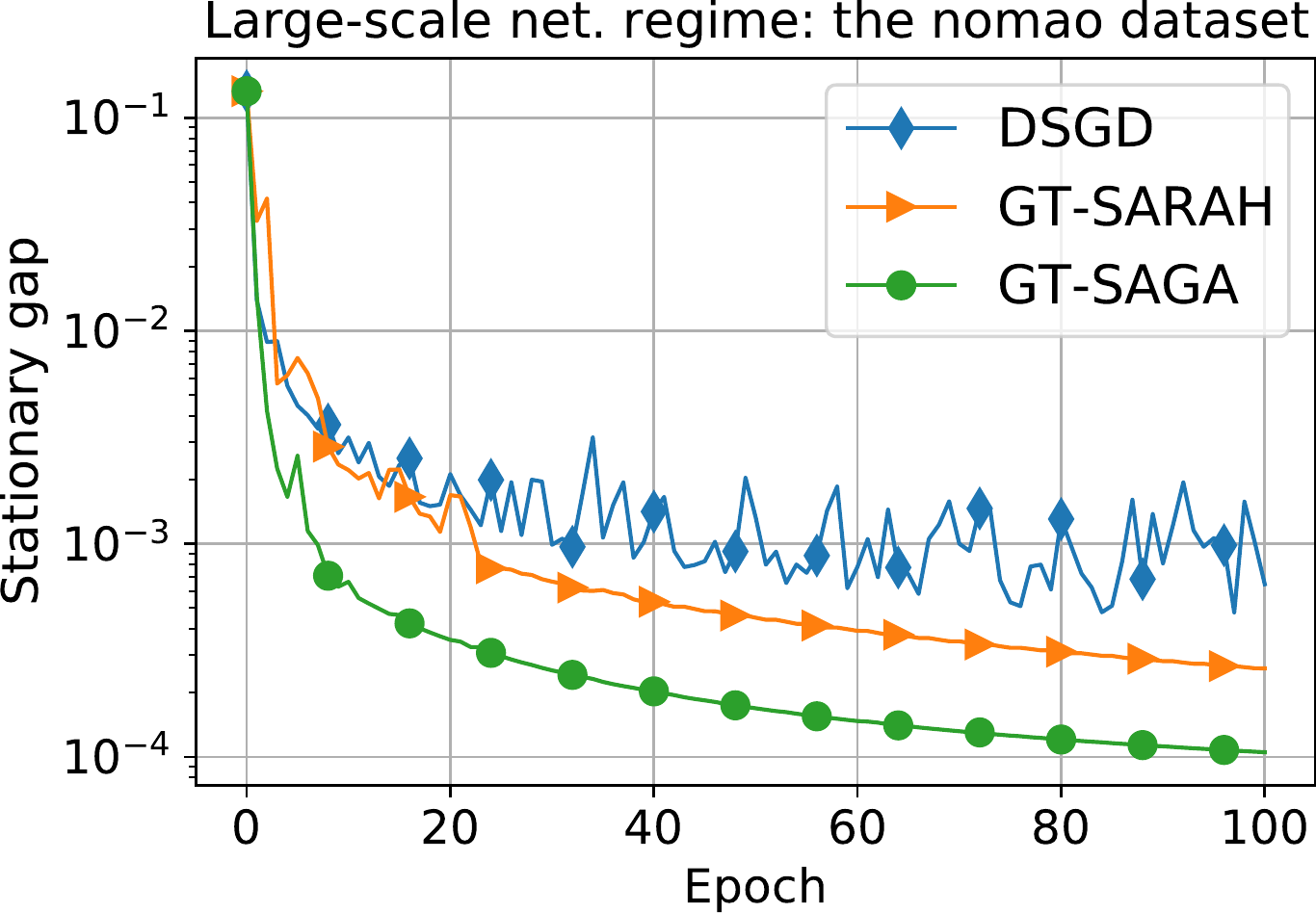}
\includegraphics[width=1.753in]{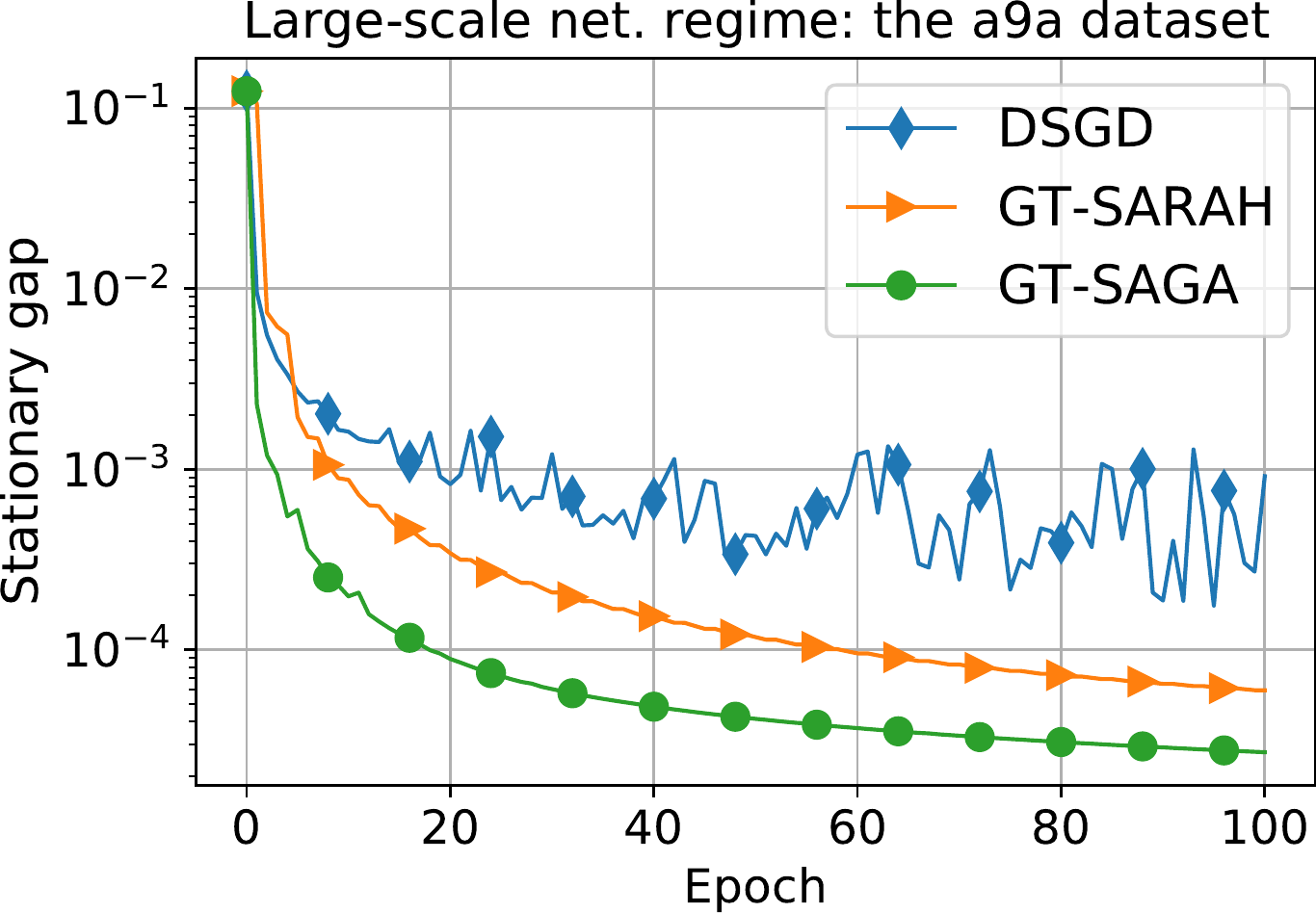}
\includegraphics[width=1.753in]{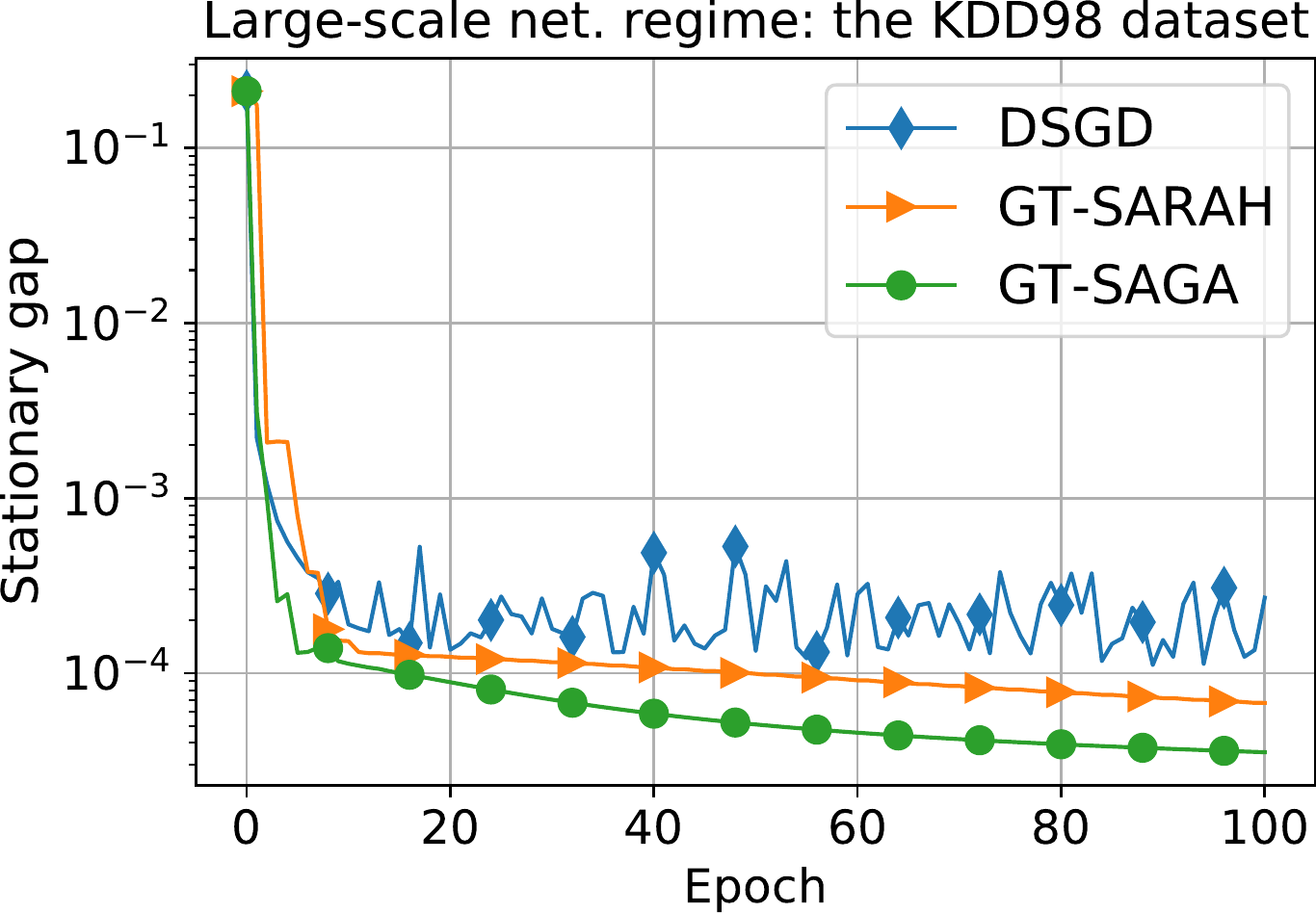}
\includegraphics[width=1.753in]{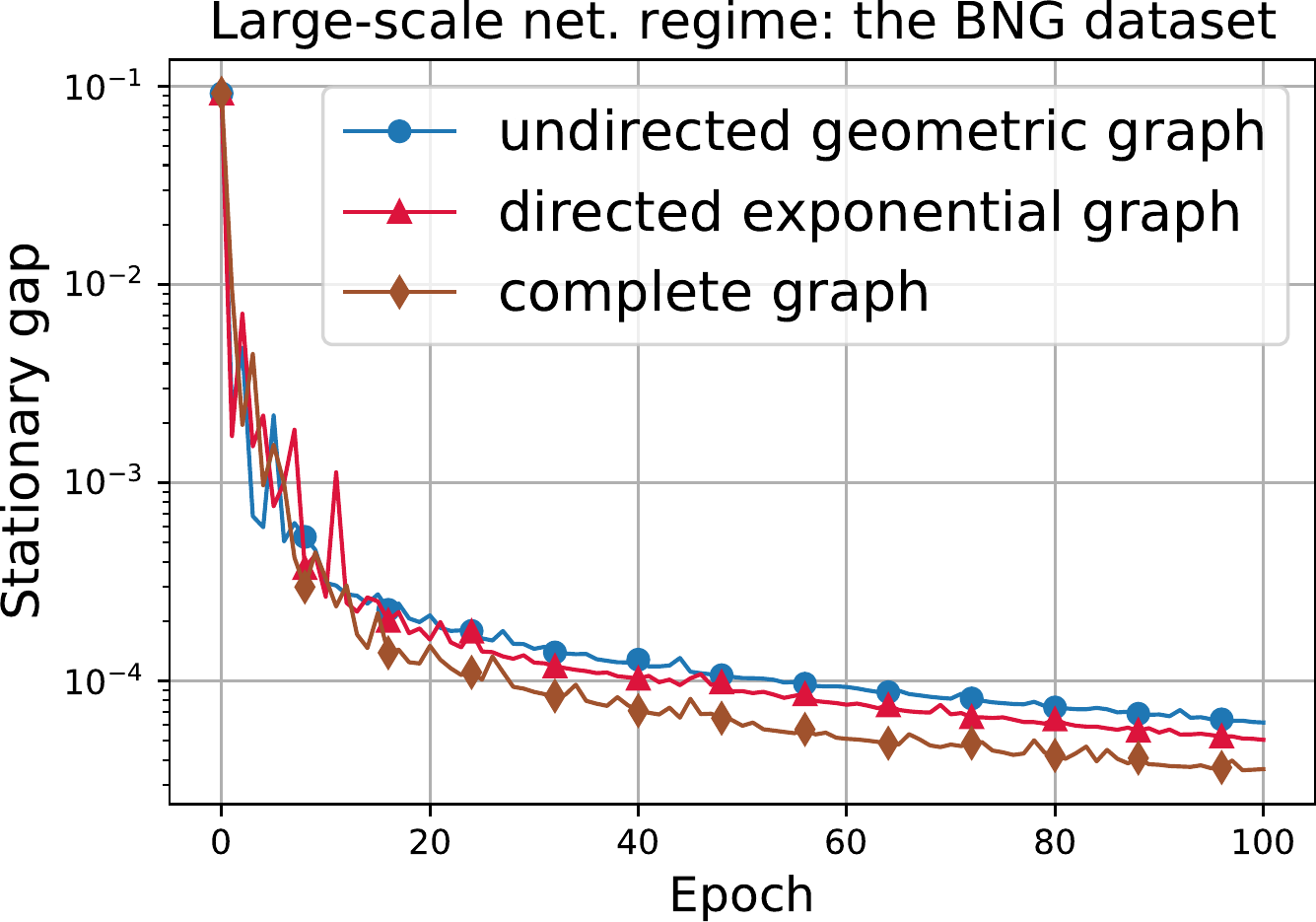}
\caption{Large-scale network regime: (i) the first three plots present the performance comparison between \textbf{\texttt{GT-SAGA}}, DSGD, and GT-SARAH on the nomao, a9a, and KDD98 datasets; (ii) the last plot presents the performance of \textbf{\texttt{GT-SAGA}} over different graph topologies in this regime on the BNG(sonar) dataset.}
\label{lnet_simul}
\end{figure*}

\begin{figure}
\centering
\includegraphics[width=1.722in]{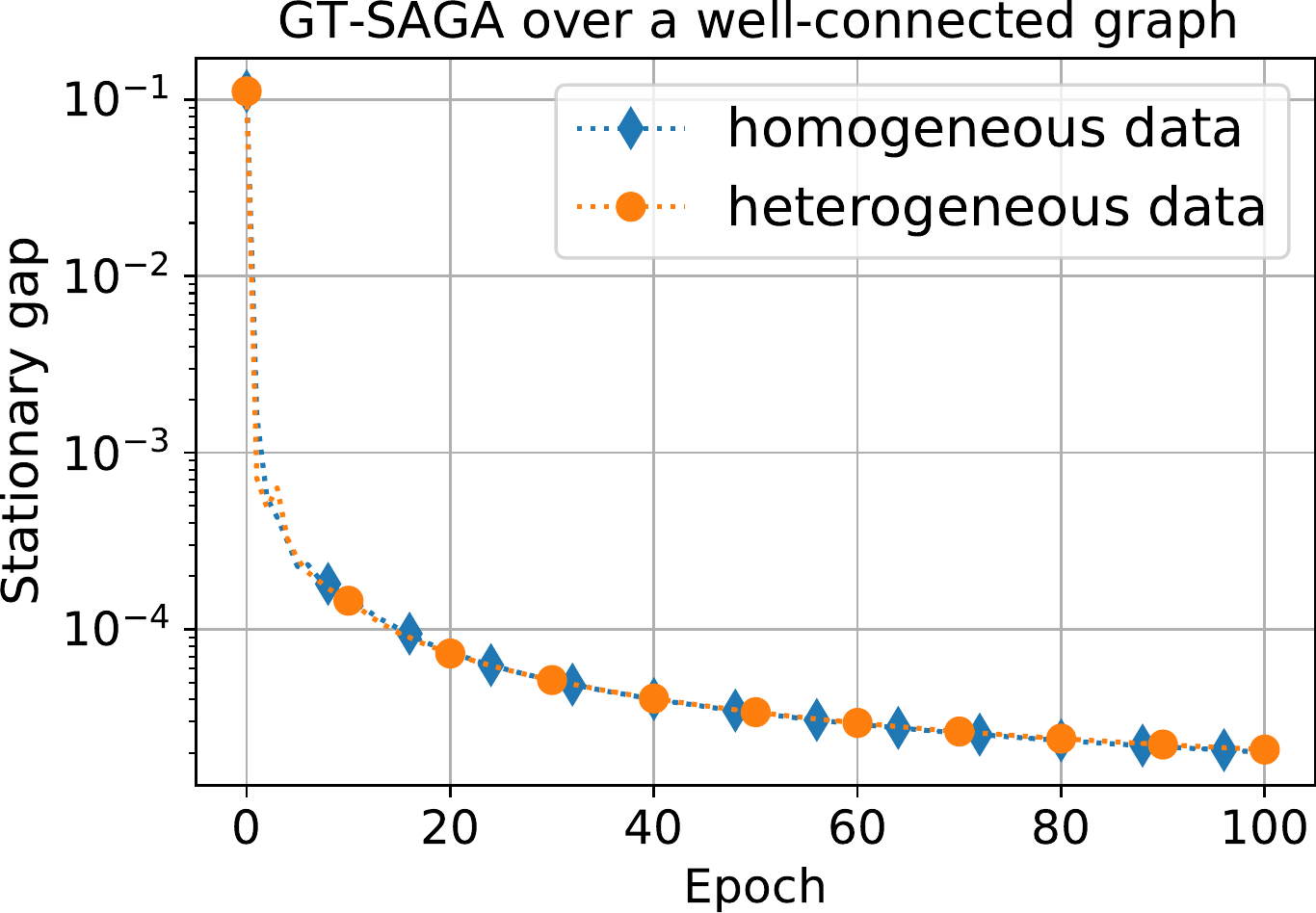}
\includegraphics[width=1.722in]{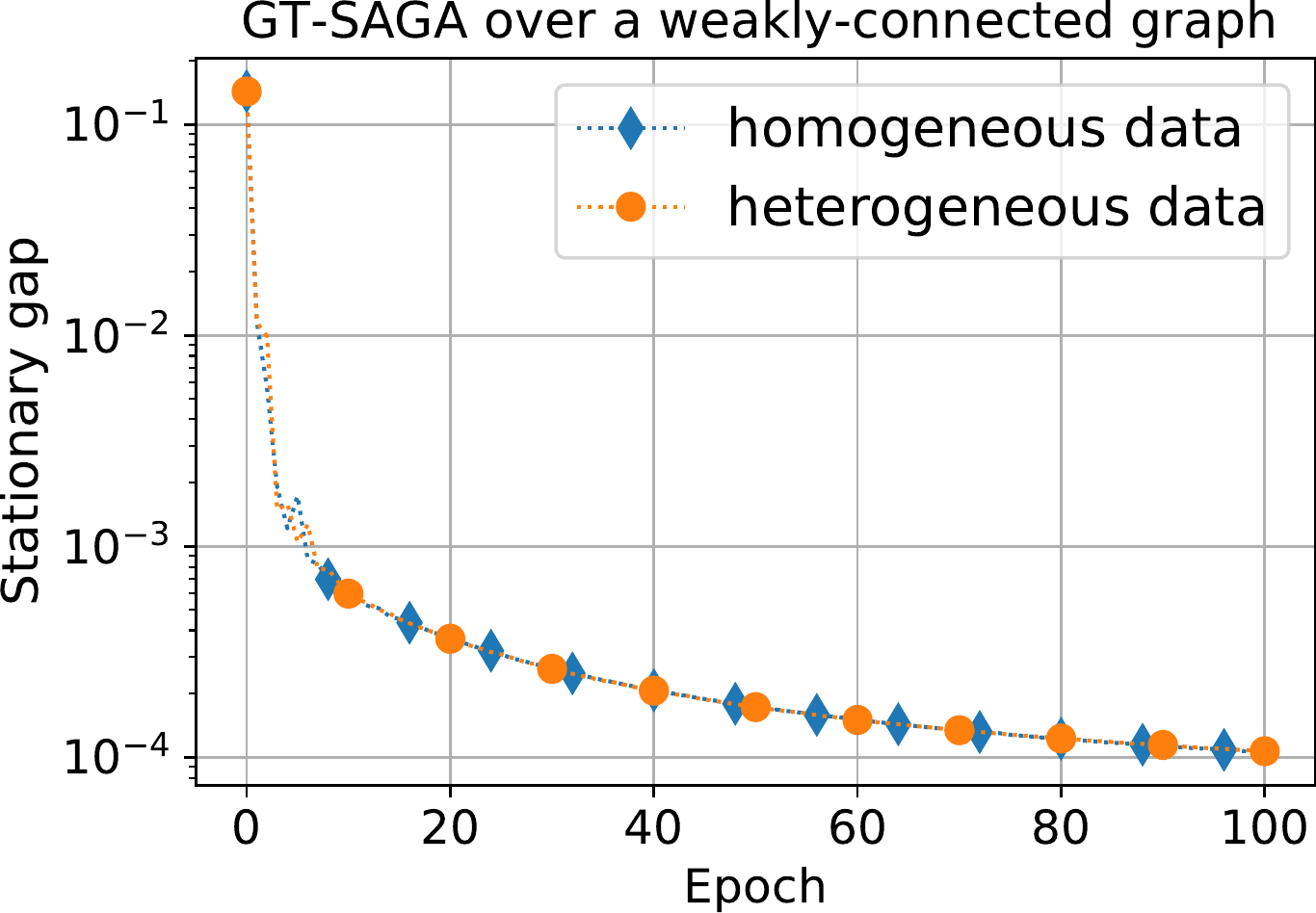}
\caption{Robustness of \textbf{\texttt{GT-SAGA}} to heterogeneous data over well- and weakly-connected graphs on the nomao dataset.}
\label{robust_simul}
\end{figure}

\begin{figure*}
\centering
\includegraphics[width=1.8in]{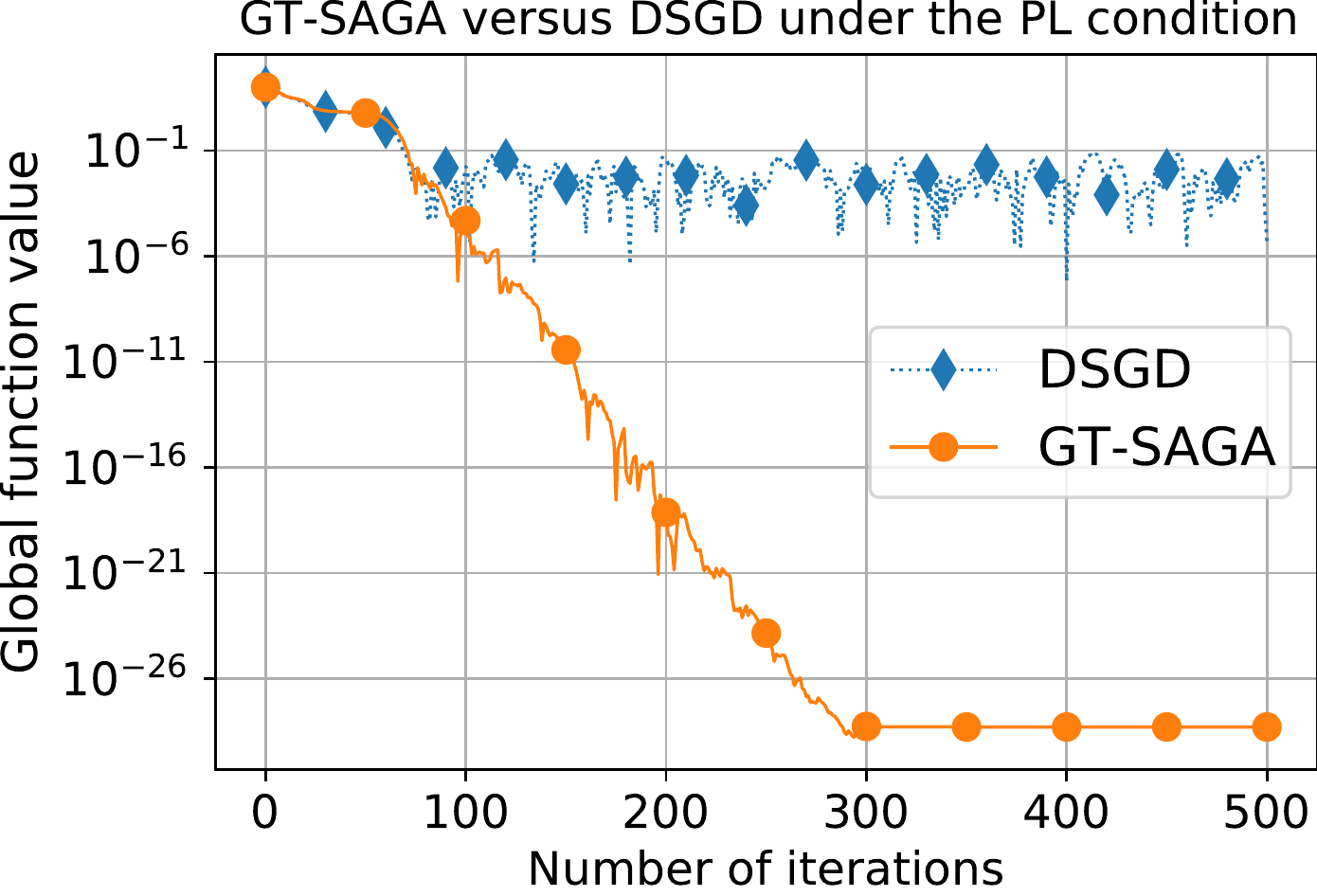}
\includegraphics[width=1.72in]{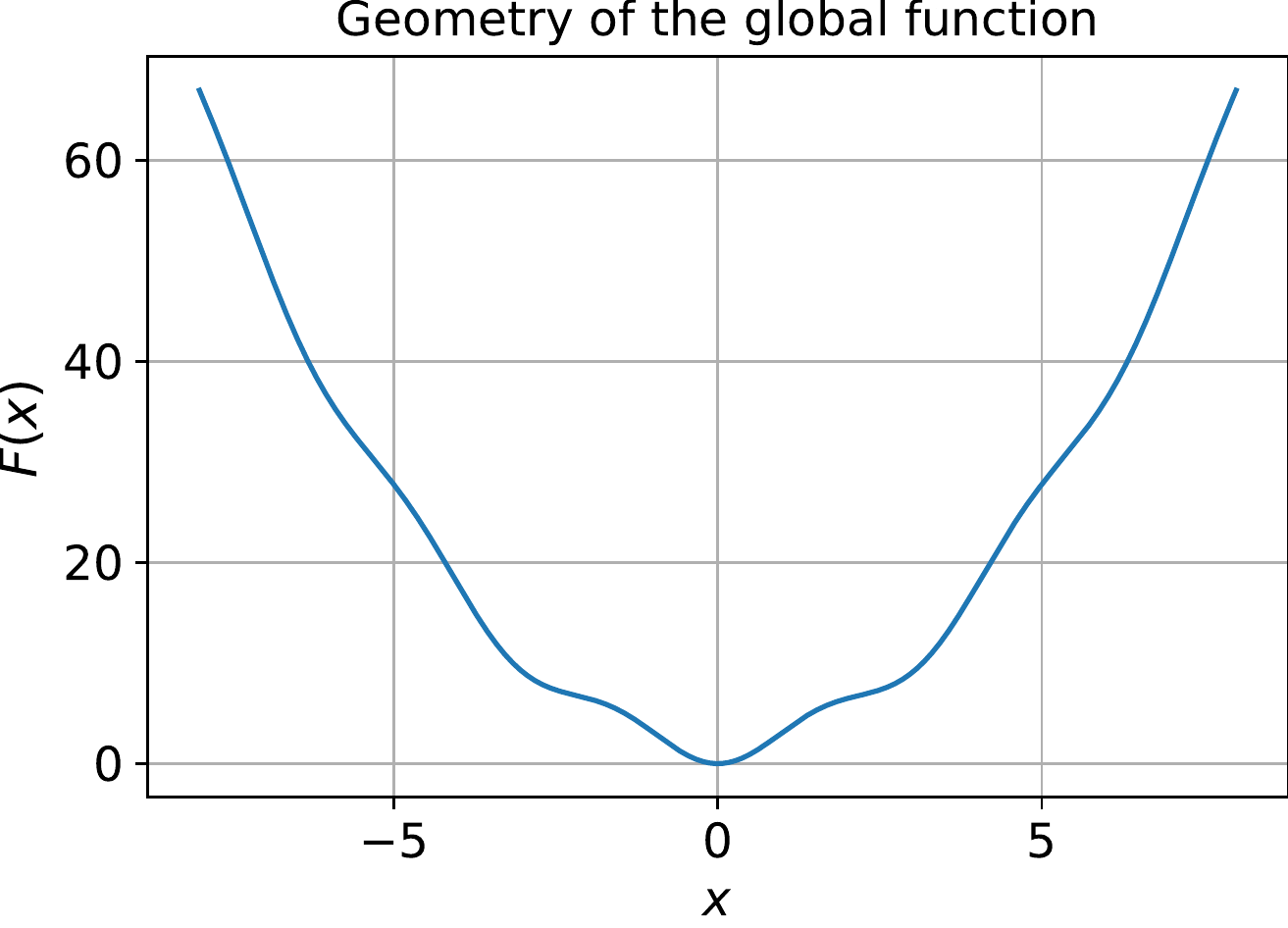}
\includegraphics[width=1.75in]{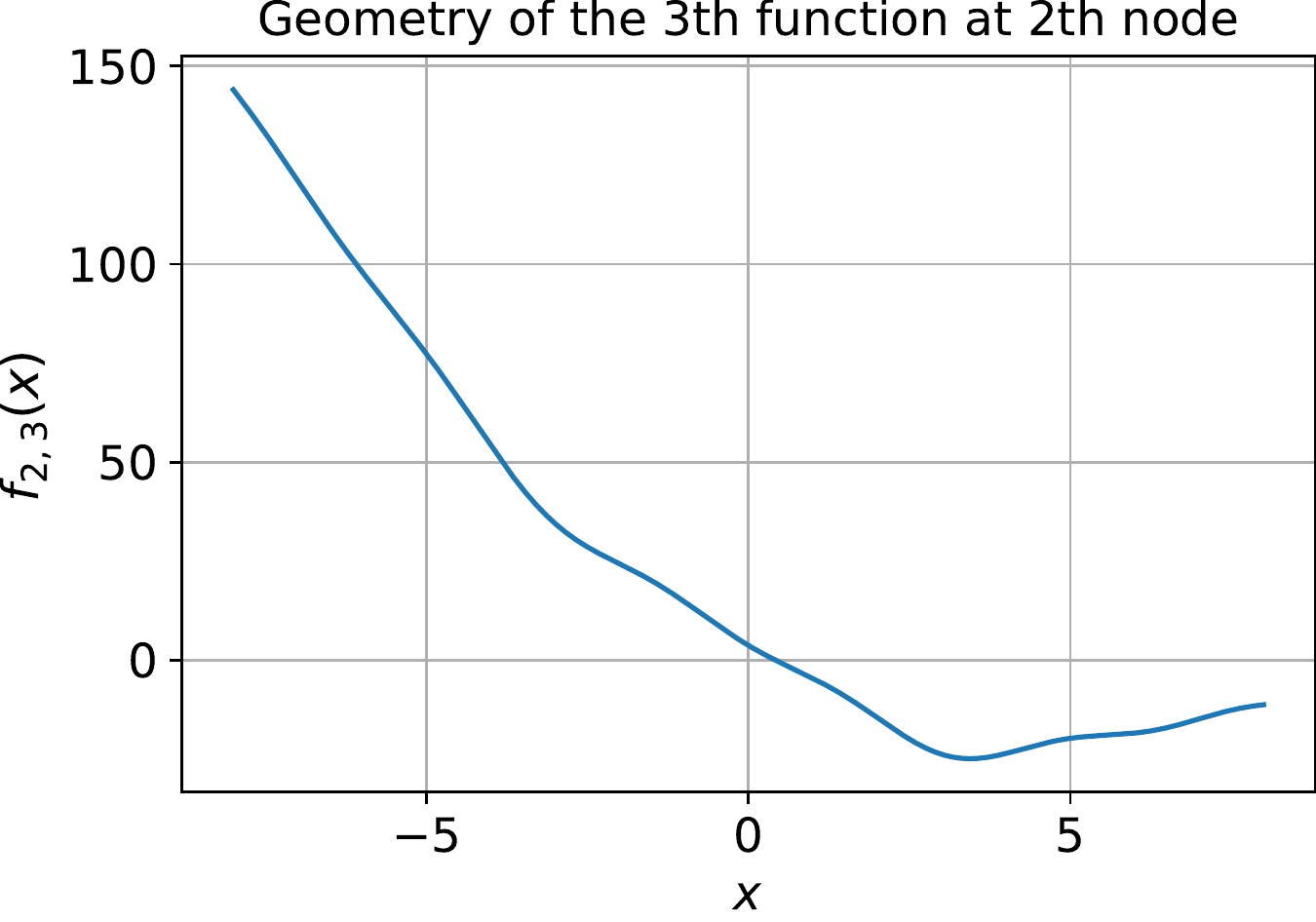}
\includegraphics[width=1.72in]{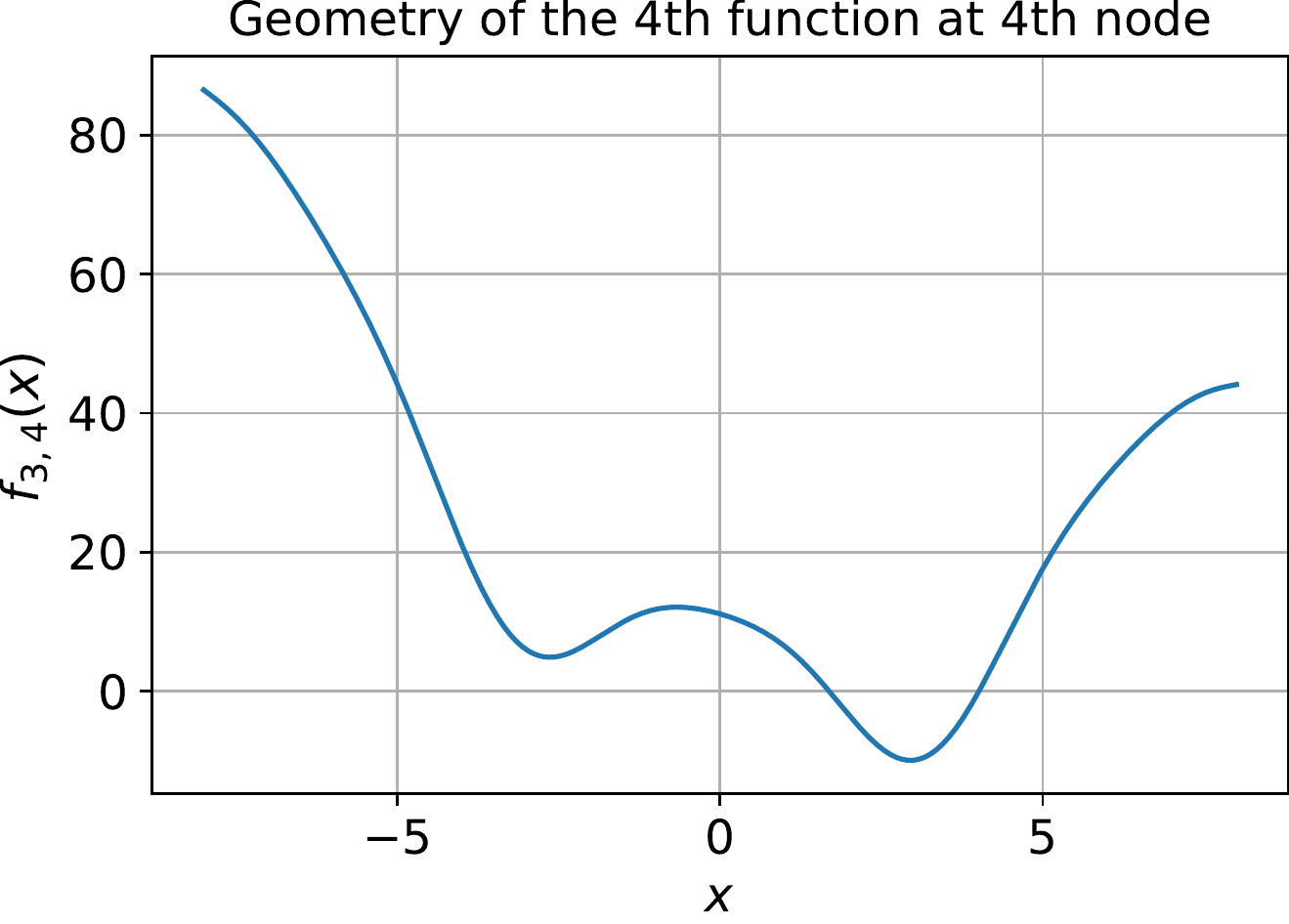}
\caption{The PL condition: (i) the first plot presents the performance comparison between \textbf{\texttt{GT-SAGA}} and DSGD when the global function satisfies the PL condition; (ii) the last three plots present the geometry comparison of the global and local component functions.}
\label{PL_simul}
\end{figure*}

In this section, we present numerical simulations to illustrate our main theoretical results. The network topologies of interest are undirected ring, undirected 2D-grid, directed exponential, undirected geometric, and complete graphs; see~\cite{GTVR,tutorial_nedich,SGP_ICML} for details of these graphs. The doubly stochastic weights are set to be equal for the ring and exponential graphs, and are generated by the lazy Metropolis rule for the grid and geometric graphs. We manually optimize the parameters of all algorithms in all experiments for their best performance.

\subsection{Non-convex binary classification}\label{exp_BC}
{\color{black}In this subsection, we consider a decentralized non-convex generalized linear model for binary classification. In view of Problem~\eqref{Problem}, each component cost~$f_{i,j}$ is defined as~\cite{nonconvex_BC}
\begin{align*}
f_{i,j}(\x) := \ell\big(\xi_{i,j}\mb{x}^\top\bds{\theta}_{i,j}\big),
~~
\ell(u) := \left(1-\frac{1}{1+\exp(-u)}\right)^2,
\end{align*}
where~${\bds{\theta}_{i,j}\in\mathbb{R}^{p}}$ is the~$j$-th data vector at the~$i$-th node, $\xi_{i,j}\in\{-1,+1\}$ is the label of~$\bds{\theta}_{i,j}$, and~$\ell:\mathbb{R}\rightarrow\mathbb{R}$ is a $\frac{4}{3}$-smooth non-convex loss. We normalize each data to be~$\|\bds{\theta}_{i,j}\| = 1, \forall i,j$. Since $\nabla^2f_{i,j}(\x) = \ell''(\xi_{i,j}\x^\top\bds{\theta}_{i,j})\bds{\theta}_{i,j}\bds{\theta}_{i,j}^\top$, it can be verified that $\|\nabla^2f_{i,j}(\x)\|=|\ell''(\xi_{i,j}\x^\top\bds{\theta}_{i,j})|\leq\frac{4}{3}$. Hence each component cost~$f_{i,j}$ is non-convex and~$\frac{4}{3}$-smooth. We measure the performance of the algorithms in question in terms of the decrease of the stationary gap~$\|\nabla F(\ol{\x})\|$ versus epochs, where~$\ol{\x} := \frac{1}{n}\sum_{i=1}^n\x_i$ for~$\x_i$ being the model at node~$i$ and each epoch represents~$m$ component gradient evaluations at each node. All nodes start from a vector randomly generated from the standard Gaussian distribution.} The statistics of the datasets used in the experiments are provided in Table~\ref{datasets}.

\begin{table}[hbt]
\renewcommand{\arraystretch}{1.1}
\caption{Datasets used in numerical experiments, available at \href{https://www.openml.org/}{https://www.openml.org/}.}
\vspace{-0.2cm}
\begin{center}
\begin{tabular}{|c|c|c|c|}
\hline
\textbf{Dataset} & \textbf{train} ($N = nm$)  & \textbf{dimension} ($p$) \\ \hline
nomao & $30,\!000$ & $119$  \\ \hline
a9a & $48,\!800$ & $124$  \\ \hline
w8a & $60,\!000$ & $300$  \\ \hline
KDD98 & $80,\!000$ & $478$  \\ \hline
covertype & $100,\!000$ & $55$ \\ \hline
MiniBooNE & $100,\!000$ & $51$  \\ \hline
BNG(sonar) & $100,\!000$ & $61$  \\ \hline
\end{tabular}
\end{center}
\label{datasets}
\end{table}

\subsubsection{Big data regime} We first test the convergence behavior of \textbf{\texttt{GT-SAGA}} in the big data regime by uniformly distributing the KDD98, covertype, MiniBooNE, and BNG(sonar) datasets over a network of~${n=20}$ nodes. We consider four different network topologies with decreasing sparsity, i.e., the undirected ring, undirected 2D-grid, directed exponential, and complete graph; their corresponding {\color{black}second largest singular values} of the weight matrices are~${\lambda = 0.98, 0.97, 0.6,0}$, respectively. It can be verified that the big data condition~\eqref{bdata_con} holds. The experimental results are shown in Fig.~\ref{bdata_simul}, where we observe that the convergence rate of \textbf{\texttt{GT-SAGA}} is independent of the network topology in this big data regime; see Remark~\ref{rmk_bdata}.

\subsubsection{Large-scale network regime} We next compare the performance of \textbf{\texttt{GT-SAGA}} with DSGD~\cite{DSGD_NIPS} and GT-SARAH~\cite{GT-SARAH} in the large-scale network regime. To this aim, we generate a sparse geometric graph of~${n=200}$ nodes with $\lambda \approx 0.99$ and uniformly distribute the nomao, a9a, w8a, and BNG(sonar) datasets over the nodes. It can be verified that the large-scale network condition~\eqref{l_net} holds. {\color{black}The numerical results are presented in Fig.~\ref{lnet_simul}: the first three plots show that \textbf{\texttt{GT-SAGA}} achieves the best performance among the algorithms in comparison, while the last plot shows that the convergence rate of \textbf{\texttt{GT-SAGA}} is dependent on the network topology in this large-scale network regime; see Remark~\ref{rmk_network}.}

\subsubsection{Robustness to heterogeneous data}\label{exp_rob}
{\color{black}We now make the data distributions across the nodes significantly heterogeneous by letting each node only have data samples of one label, so that no node can train a valid classification model only from its local data. 
We compare the performance of \textbf{\texttt{GT-SAGA}} under heterogeneous and homogeneous distribution of the nomao dataset.
We consider a well-connected graph, i.e., the~$20$-node exponential graph,
and a weakly-connected graph, i.e.,
the~$200$-node geometric graph. The numerical results are shown in Fig.~\ref{robust_simul}, where we observe that the convergence rate of \textbf{\texttt{GT-SAGA}}
is not affected by the data heterogeneity over both graphs; see Remark~\ref{rmk_hete}.}

\subsection{Synthetic functions that satisfy the PL condition}
{\color{black}Finally, we verify the linear rate of \textbf{\texttt{GT-SAGA}} when the global function~$F$ satisfies the PL condition. Specifically, we choose each component function~$f_{i,j}:\mathbb{R}\rightarrow\mathbb{R}$ as~$$f_{i,j}(x) = x^2 + 3\sin^2(x) + a_{i,j}\cos(x) + b_{i,j}x,$$ where~${\sum_{i=1}^n\sum_{j=1}^ma_{i,j} = 0}$ and ${\sum_{i=1}^n\sum_{j=1}^mb_{i,j} = 0}$ such that $a_{i,j}\neq0$,~$b_{i,j}\neq0$,~$\forall i,j$. This formulation hence leads to the global function~${F(x) = x^2 + 3\sin^2(x)}$. It can be verified that~$F$ is non-convex and satisfies the PL condition~\cite{PL_1}. Note that each~$f_{i,j}$ is nonlinear and highly deviated from~$F$; see the last three plots in Fig.~\ref{PL_simul} for a comparison of local and global geometries. We use the~$20$-node exponential graph and set~$m = 5$. It can be observed from the first plot in Fig.~\ref{PL_simul} that \textbf{\texttt{GT-SAGA}} achieves linear rate to the optimal solution, while DSGD converges to an inexact solution; see Remark~\ref{rmk_pl_full}.}

\section{Convergence analysis}\label{conv}
In this section, we present the convergence analysis of \textbf{\texttt{GT-SAGA}}, i.e., the sublinear convergence for general smooth non-convex functions and the linear convergence when the global function~$F$ additionally satisfies the PL condition. Throughout this section, we assume Assumption~\ref{sample},~\ref{smooth}, and~\ref{network} hold without explicitly stating them; we only assume Assumption~\ref{PL} hold in Subsection~\ref{PL_sec}. In Subsections~\ref{sec_bvr}-\ref{sec_sgt}, we establish key relationships between several important quantities, based on which the proofs of Theorem~\ref{main_ncvx} and~\ref{main_PL} are derived in Subsections~\ref{sec_Th1_proof} and~\ref{PL_sec} respectively. 
We start by presenting some preliminary facts. 
\subsection{Preliminaries}
\textbf{\texttt{GT-SAGA}} can be written in the following form:~$\forall k\geq0$,
\begin{subequations}
\begin{align}
\mb{y}^{k+1} &= \W\left(\mb{y}^{k} + \mb{g}^{k} - \mb{g}^{k-1}\right), \label{gtsaga_y}\\ 
\mb{x}^{k+1} &= \W\left(\mb{x}^{k} - \alpha\mb{y}^{k+1}\right),     \label{gtsaga_x}
\end{align}
\end{subequations}
where~$\mb x^k, \mb y^k, \mb g^k$ are random vectors in~$\mathbb{R}^{np}$ that concatenate all local states~$\{\mb{x}_i^k\}_{i=1}^n$, gradient trackers~$\{\mb{y}_i^k\}_{i=1}^n$, local SAGA estimators~$\{\mb{g}_i^k\}_{i=1}^n$, respectively, and~$\W = \ul{\W}\otimes \I_p.$  We denote~$\F^k$ as the filtration of \textbf{\texttt{GT-SAGA}}, i.e.,~$\forall k\geq1$,
\begin{align*}
\F^k := \sigma\left(\left\{\tau_i^t,s_i^t:i\in\mc{V},t\leq k-1\right\}\right),
\qquad
\F^0 :=\left\{\phi,\Omega\right\},
\end{align*} 
where~$\phi$ is the empty set. It can be verified that $\mb{x}^k$, $\mb{y}^k$ and $\mb{z}_{i,j}^k,\forall i,j$, are~$\F^k$-measurable and~$\mb{g}^k$ is~$\mc{F}^{k+1}$-measurable for all~$k\geq0$.   
We use~$\mathbb{E}[\cdot|\mc{F}^k]$ to denote the conditional expectation  with respect to~$\mc{F}^k$. For the ease of exposition, we introduce the following quantities:
\begin{align*}
&{\color{black}\mb{J} := \left(\mb{1}_n\mb{1}_n^\top/n\right)\otimes\mb{I}_p,} \\
&\nabla\mb{f}(\mb{x}^k) = \big[\nabla f_1(\mb{x}_1^k)^\top,\cdots,\nabla f_n(\mb{x}_n^k)^\top\big]^\top, \\
&\ol{\nabla\mb{f}}(\mb{x}^k) = \left(\mb{1}_n^\top\otimes \I_p/n\right)\nabla\mb{f}(\mb{x}^k), \qquad\ol{\mb x}^k = \left(\mb{1}_n^\top\otimes \I_p/n\right)\mb{x}^k, \\
&\ol{\mb y}^k = \left(\mb{1}_n^\top\otimes \I_p/n\right)\mb{y}^k, \qquad\ol{\mb g}^k = \left(\mb{1}_n^\top\otimes \I_p/n\right)\mb{g}^k.
\end{align*}
We assume $\ol{\mb x}^0\in\mbb{R}^p$ is constant and hence all random variables generated by \textbf{\texttt{GT-SAGA}} have bounded second moment. The following lemma lists several well-known facts in the context of gradient tracking and SAGA estimators, which may be found in~\cite{harnessing,SAGA,GTVR,polyak1987introduction,NEXT_scutari}.
\begin{lemma}\label{basic}
The following relationships hold.
\begin{enumerate}[(a)]
\item $\forall\mb{x}\in\mbb{R}^{np}$,~$\|\W\mb{x}-\J\mb{x}\|\leq\lambda\|\mb{x}-\J\mb{x}\|$. \label{W}
\item $\ol{\mb y}^{k+1} = \ol{\mb g}^k ,\forall k\geq0$. \label{track}
\item $\|\ol{\nabla\mb{f}}(\mb{x}^k)-\nabla F(\ol{\mb{x}}^k)\|^2\leq\frac{L^2}{n}\|\mb{x}^k-\J\mb{x}^k\|^2,\forall k\geq0$. \label{Lbound}
\item $\E[\mb{g}_i^k|\F^k] = \nabla f_i(\mb{x}_i^k),\forall i\in\mc{V},\forall k\geq0$.
\item $\|\nabla F(\x)\|^2 \leq 2L\left(F(\x)-F^*\right).$ \label{upper_L}
\end{enumerate}
\end{lemma}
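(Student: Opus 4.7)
The plan is to establish each of the five items (a)--(e) separately as short, self-contained calculations; all are standard consequences of doubly-stochastic averaging, the definition of the SAGA estimator, and $L$-smoothness of the component functions. I would arrange the proofs so that (a)--(b) only invoke Assumption~\ref{network} and the algorithm's update rules, (c) adds in Assumption~\ref{smooth}, (d) uses Assumption~\ref{sample}, and (e) uses only $L$-smoothness together with the lower-boundedness of $F$ from Assumption~\ref{smooth}.

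For (a), I would use the Kronecker structure $\W = \ul{\W}\otimes\I_p$ and $\J = (\mb{1}_n\mb{1}_n^\top/n)\otimes\I_p$ together with the double stochasticity of $\ul{\W}$ to verify $\W\J=\J\W=\J$; hence $\W\mb{x}-\J\mb{x} = \W\mb{x}-\W\J\mb{x} = \W(\mb{x}-\J\mb{x})$. Since $\mb{x}-\J\mb{x}$ lies in the range of $\I_{np}-\J$, which is $\W$-invariant and orthogonal to $\mb{1}_n\otimes\I_p$, the norm bound follows from $\lambda_2(\ul{\W})=\lambda$. For (b), left-multiplying~\eqref{gtsaga_y} by $(\mb{1}_n^\top/n)\otimes\I_p$ and using $\mb{1}_n^\top\ul{\W}=\mb{1}_n^\top$ gives $\ol{\mb{y}}^{k+1}=\ol{\mb{y}}^k+\ol{\mb{g}}^k-\ol{\mb{g}}^{k-1}$; an induction starting from $\ol{\mb{y}}^0=\mb{0}_p$ and $\mb{g}^{-1}=\mb{0}_{np}$ telescopes this to $\ol{\mb{y}}^{k+1}=\ol{\mb{g}}^k$.

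For (c), I would write $\ol{\nf}(\mb{x}^k)-\nabla F(\ol{\mb{x}}^k) = \tfrac{1}{n}\sum_{i=1}^n\bigl(\nabla f_i(\mb{x}_i^k)-\nabla f_i(\ol{\mb{x}}^k)\bigr)$, apply Jensen's inequality to move $\|\cdot\|^2$ inside the average, and invoke $L$-smoothness of each $f_i$, together with the identity $\sum_{i=1}^n \|\mb{x}_i^k-\ol{\mb{x}}^k\|^2 = \|\mb{x}^k-\J\mb{x}^k\|^2$. For (d), Assumption~\ref{sample} makes $\tau_i^k$ independent of $\F^k$, while $\mb{x}_i^k$ and the table $\{\mb{z}_{i,j}^k\}_{j=1}^m$ are $\F^k$-measurable; uniform sampling therefore yields $\E[\nabla f_{i,\tau_i^k}(\mb{x}_i^k)\mid\F^k]=\nabla f_i(\mb{x}_i^k)$ and $\E[\nabla f_{i,\tau_i^k}(\mb{z}_{i,\tau_i^k}^k)\mid\F^k]=\tfrac{1}{m}\sum_{j=1}^m\nabla f_{i,j}(\mb{z}_{i,j}^k)$, which cancels exactly the control-variate term in the definition of $\mb{g}_i^k$. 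For (e), I would apply the standard descent lemma to the $L$-smooth $F$ at $\x$ along the direction $-\nabla F(\x)/L$ to obtain $F(\x-\nabla F(\x)/L)\leq F(\x)-\tfrac{1}{2L}\|\nabla F(\x)\|^2$; since the left-hand side is at least $F^*$, the inequality follows.

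No part is a substantive obstacle, as each is a direct consequence of either the algorithm's structure or a textbook inequality. The only step requiring some care is (d), where one must use Assumption~\ref{sample} to argue the independence of $\tau_i^k$ from both $\mb{x}_i^k$ and the entire gradient table $\{\mb{z}_{i,j}^k\}_{j=1}^m$ before taking the conditional expectation; the $\F^k$-measurability of these $k$-indexed quantities, noted in the Preliminaries, is what makes this rigorous.
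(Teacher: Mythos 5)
Your proof is correct in all five parts; the paper itself does not prove this lemma but simply cites standard references, and your arguments are exactly the textbook ones those references contain (double stochasticity giving $\W\J=\J\W=\J$ for (a)--(b), Jensen plus $L$-smoothness for (c), independence of $\tau_i^k$ from $\F^k$ for (d), and the descent lemma evaluated at $\x-\nabla F(\x)/L$ for (e)). The only cosmetic remark is that for (a) you could equivalently write $\W\x-\J\x=(\W-\J)(\x-\J\x)$ and invoke $\|\W-\J\|=\lambda$ directly, which is the form the paper uses later in its gradient-tracking estimates.
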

Note that Lemma~\ref{basic}(\ref{upper_L}) is a consequence of the~$L$-smoothness of the global function~$F$ and is only used in Subsection~\ref{PL_sec} while other statements in Lemma~\ref{basic} are frequently utilized throughout the analysis. 
The next lemma states some standard inequalities on the network consensus error~\cite{GTVR,improved_DSGT_Xin}.
\begin{lemma}
The following inequality holds:~$k\geq0$,
\begin{align}
\|\mb{x}^{k+1}-\J\mb{x}^{k+1}\|^2 \leq&~\textstyle\frac{1+\lambda^2}{2} \|\mb{x}^{k}-\J\mb{x}^{k}\|^2 \nonumber\\
&\textstyle\quad+ \frac{2\alpha^2\lambda^2}{1-\lambda^2}\|\mb{y}^{k+1}-\J\mb{y}^{k+1}\|^2. \label{consensus1}\\
\|\mb{x}^{k+1}-\J\mb{x}^{k+1}\|^2 \leq&~2\lambda^2\|\mb{x}^{k}-\J\mb{x}^{k}\|^2 \nonumber\\
&\quad+ 2\alpha^2\lambda^2\|\mb{y}^{k+1}-\J\mb{y}^{k+1}\|^2. \label{consensus2}
\\
\|\mb{x}^{k+1}-\J\mb{x}^{k+1}\| \leq&~\lambda\|\mb{x}^{k}-\J\mb{x}^{k}\|
+ \alpha\lambda\|\mb{y}^{k+1}-\J\mb{y}^{k+1}\|. \label{consensus3}
\end{align}
\end{lemma}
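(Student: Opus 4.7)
The plan is to derive all three inequalities from a single clean bound on $\|\mb{x}^{k+1}-\J\mb{x}^{k+1}\|$, and then obtain the squared versions by elementary inequalities. The starting point is the update \eqref{gtsaga_x}, namely $\mb{x}^{k+1}=\W(\mb{x}^k-\alpha\mb{y}^{k+1})$. Since $\ul{\W}$ is doubly stochastic, $\W\J=\J\W=\J$, and therefore $\J\mb{x}^{k+1}=\J(\mb{x}^k-\alpha\mb{y}^{k+1})$. Subtracting these two identities gives
\begin{align*}
\mb{x}^{k+1}-\J\mb{x}^{k+1} = (\W-\J)(\mb{x}^k-\alpha\mb{y}^{k+1}).
\end{align*}
Applying Lemma~\ref{basic}(\ref{W}) to the vector $\mb{x}^k-\alpha\mb{y}^{k+1}$ then yields
\begin{align*}
\|\mb{x}^{k+1}-\J\mb{x}^{k+1}\| \leq \lambda\,\|(\mb{x}^k-\J\mb{x}^k) - \alpha(\mb{y}^{k+1}-\J\mb{y}^{k+1})\|,
\end{align*}
and the triangle inequality gives \eqref{consensus3}.

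For \eqref{consensus2} I would simply square \eqref{consensus3} and use the elementary bound $(a+b)^2\leq 2a^2+2b^2$. For \eqref{consensus1}, squaring \eqref{consensus3} and applying Young's inequality $(a+b)^2\leq(1+c)a^2+(1+c^{-1})b^2$ with the choice $c=(1-\lambda^2)/(2\lambda^2)$ produces the coefficient $(1+c)\lambda^2=(1+\lambda^2)/2$ on the consensus term. The resulting coefficient on the gradient-tracker term is $(1+c^{-1})\alpha^2\lambda^2=\alpha^2\lambda^2(1+\lambda^2)/(1-\lambda^2)$, which is bounded by $2\alpha^2\lambda^2/(1-\lambda^2)$ because $\lambda\in[0,1)$ implies $1+\lambda^2\leq 2$. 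This gives \eqref{consensus1}.

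There is no substantial obstacle here. The only mildly subtle point is picking the Young's-inequality constant $c$ so that the contraction coefficient $(1+\lambda^2)/2<1$ (a strict contraction less than $\lambda^2$-scaling would be) emerges on the consensus term; once $c=(1-\lambda^2)/(2\lambda^2)$ is selected, the rest is a one-line calculation that exploits $\lambda<1$. The whole argument is a direct consequence of the contraction property of $\W-\J$ recorded in Lemma~\ref{basic}(\ref{W}) combined with the algebraic identity $\J\W=\J$; no probabilistic reasoning or properties of the SAGA estimator are needed.
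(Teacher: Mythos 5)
Your proof is correct and is exactly the standard argument that the paper's cited references use (the paper itself omits the proof of this lemma): the identity $\mb{x}^{k+1}-\J\mb{x}^{k+1}=(\W-\J)(\mb{x}^k-\alpha\mb{y}^{k+1})$, Lemma~\ref{basic}(\ref{W}), the triangle inequality, and Young's inequality with $c=(1-\lambda^2)/(2\lambda^2)$ give all three bounds. The only cosmetic caveat is that this choice of $c$ is undefined when $\lambda=0$, but in that degenerate case $\W=\J$ and all three inequalities hold trivially, so nothing is lost.
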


\subsection{Bounds on the variance of local SAGA estimators}\label{sec_bvr}
 In this subsection, we bound the variance of the local SAGA gradient estimators~$\mb{g}_i^k$'s. For analysis purposes, we construct two auxiliary~$\F^k$-adapted sequences: $\forall i\in\mc{V}$, $\forall k\geq0$,
\begin{align*}
t_i^k := \frac{1}{m}\sum_{j=1}^m\|\ol{\mb{x}}^k - \mb{z}_{i,j}^k\|^2, \qquad
t^k := \frac{1}{n}\sum_{i=1}^n t_i^k.
\end{align*}
These two sequences are essential in the convergence analysis. 
We note that~$t^k$ measures the average distance between the mean state~$\ol{\x}^k$ of the networked nodes and the latest iterates~$\mb{z}_{i,j}^k$'s where the component gradients were computed at iteration~$k$ in the gradient tables.
Intuitively,~$t^k$ goes to~$0$ as all nodes in \textbf{\texttt{GT-SAGA}} reach consensus on a stationary point. We will establish a contraction argument in~$t^k$ in Subsection~\ref{sec_t}. In the following lemma, we show that the variance of~$\mb{g}_i^k$ may be bounded by the network consensus error and~$t^k$. 

\begin{lemma}\label{BVR}
The following inequality holds:~$\forall k\geq0$,
\begin{align}
&\mathbb{E}\big[\|\mb{g}^k - \nabla\mb{f}(\mb{x}^k)\|^2|\mc{F}^k\big]
\leq 2L^2\|\mb{x}^k - \J\mb{x}^k\|^2
+ 2nL^2 t^k,         \label{BVR1}                 \\
&\mathbb{E}\big[\|\ol{\mb g}^k\|^2|\mc{F}^k\big] 
\leq
\frac{2L^2}{n^2}\|\mb{x}^k - \J\mb{x}^k\|^2
+\frac{2L^2}{n}t^k
+\|\ol{\nabla\mb{f}}(\mb{x}^k)\|^2. \label{BVR2}
\end{align}
\end{lemma}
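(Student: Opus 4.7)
The plan is to bound the conditional second moment of each local SAGA estimator $\mb{g}_i^k$, and then aggregate across nodes using Assumption~\ref{sample} (node-wise independence of the sampling indices) and the structure of $\J$ as a mean-computing operator. Throughout, I would exploit the key observation that since $\x^k$ and all table entries $\{\z_{i,j}^k\}$ are $\F^k$-measurable, the only randomness in $\mb{g}_i^k$ given $\F^k$ comes from $\tau_i^k$, which is uniformly distributed on $\{1,\dots,m\}$.

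To prove \eqref{BVR1}, I would first fix a node $i$ and write
\[
\mb{g}_i^k - \nabla f_i(\x_i^k) = X_i^k - \E[X_i^k\mid\F^k],
\qquad X_i^k := \nabla f_{i,\tau_i^k}(\x_i^k) - \nabla f_{i,\tau_i^k}(\z_{i,\tau_i^k}^k),
\]
using Lemma~\ref{basic}(d) to identify $\E[X_i^k\mid\F^k] = \nabla f_i(\x_i^k) - \tfrac{1}{m}\sum_j \nabla f_{i,j}(\z_{i,j}^k)$. Since variance is bounded by the raw second moment, the uniform distribution of $\tau_i^k$ gives
\[
\E\!\left[\|\mb{g}_i^k - \nabla f_i(\x_i^k)\|^2\,\big|\,\F^k\right]
\le \E\!\left[\|X_i^k\|^2\,\big|\,\F^k\right]
= \frac{1}{m}\sum_{j=1}^m \|\nabla f_{i,j}(\x_i^k) - \nabla f_{i,j}(\z_{i,j}^k)\|^2.
\]
Then Assumption~\ref{smooth} ($L$-smoothness of each $f_{i,j}$) followed by Young's inequality $\|\x_i^k-\z_{i,j}^k\|^2 \le 2\|\x_i^k-\ol{\x}^k\|^2 + 2\|\ol{\x}^k-\z_{i,j}^k\|^2$ yields the per-node bound $2L^2\|\x_i^k-\ol{\x}^k\|^2 + 2L^2 t_i^k$. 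Summing over $i\in\mc{V}$ and using $\|\x^k-\J\x^k\|^2=\sum_i\|\x_i^k-\ol{\x}^k\|^2$ (a consequence of the definition of $\J$) and $nt^k = \sum_i t_i^k$ delivers~\eqref{BVR1}; note this step does not actually require independence across nodes because the concatenated norm decouples into a direct sum.

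For \eqref{BVR2}, I would use the standard bias–variance decomposition on $\ol{\mb{g}}^k = \tfrac{1}{n}\sum_i \mb{g}_i^k$:
\[
\E\!\left[\|\ol{\mb{g}}^k\|^2\,\big|\,\F^k\right]
= \|\ol{\nabla \mb{f}}(\x^k)\|^2 + \E\!\left[\left\|\tfrac{1}{n}\textstyle\sum_i (\mb{g}_i^k - \nabla f_i(\x_i^k))\right\|^2\,\Big|\,\F^k\right],
\]
where I have used Lemma~\ref{basic}(d) for the conditional mean $\E[\ol{\mb{g}}^k\mid\F^k] = \ol{\nabla\mb{f}}(\x^k)$. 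Here is where Assumption~\ref{sample} plays its essential role: because $\{\tau_i^k\}_{i\in\mc{V}}$ are mutually independent (given $\F^k$), the zero-mean summands $\mb{g}_i^k - \nabla f_i(\x_i^k)$ are conditionally uncorrelated, so the cross terms vanish and the fluctuation term reduces to $\tfrac{1}{n^2}\sum_i \E[\|\mb{g}_i^k-\nabla f_i(\x_i^k)\|^2\mid\F^k]$. Plugging in the per-node bound established while proving \eqref{BVR1} gives $\tfrac{2L^2}{n^2}\|\x^k-\J\x^k\|^2 + \tfrac{2L^2}{n}t^k$, which, combined with the bias term, yields~\eqref{BVR2}.

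The only subtlety I anticipate is making sure the independence hypothesis is invoked cleanly: in \eqref{BVR1} the decoupling is purely algebraic (block-norm of a stacked vector), whereas in \eqref{BVR2} one genuinely needs the $\tau_i^k$'s to be independent across $i$ for the cross-term cancellation. Beyond this bookkeeping, the remaining steps are routine applications of smoothness and Young's inequality, so I do not expect any major technical obstacle.
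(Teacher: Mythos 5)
Your proposal is correct and follows essentially the same route as the paper's proof: bound the conditional variance of each local estimator by the raw second moment of $\nabla f_{i,\tau_i^k}(\x_i^k)-\nabla f_{i,\tau_i^k}(\z_{i,\tau_i^k}^k)$, apply smoothness and Young's inequality to reach the per-node bound $2L^2\|\x_i^k-\ol{\x}^k\|^2+2L^2 t_i^k$, then sum for~\eqref{BVR1} and use the conditional bias--variance decomposition with cross-term cancellation (from the independence of the $\tau_i^k$ across nodes) for~\eqref{BVR2}. Your remark that independence is only genuinely needed for~\eqref{BVR2} is accurate and matches the structure of the paper's argument.
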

\begin{proof}
We denote~$\wh{\nabla}_i^k: = \nabla f_{i,\tau_i^{k}}\big(\mb{x}_{i}^{k}\big) - \nabla f_{i,\tau_i^{k}}\big(\mb{z}_{i,\tau_i^{k}}^{k}\big)$, $\forall i\in\mc{V}$, $\forall k\geq0$, for the ease of exposition. We first observe from Algorithm~\ref{GT-SAGA} that~$\forall k\geq0,\forall i\in\mc{V}$,
\begin{align}\label{unbias}
\mbb{E}\big[\wh{\nabla}_i^k|\mc{F}^k\big] 
= \nabla f_i(\mb{x}_i^k) - \frac{1}{m}\sum_{j=1}^{m}\nabla f_{i,j}\big(\mb{z}_{i,j}^{k}\big).
\end{align}
In light of~\eqref{unbias}, we bound the variance of~$\mb{g}_i^k$ in the following:~$\forall k\geq0,\forall i\in\mc{V}$,
\begin{align}
&\mathbb{E}\big[\|\mb{g}_i^k-\nabla f_i(\mb{x}_i^k)\|^2|\mc{F}^k\big]\nonumber\\
=&~\mathbb{E}\big[\|\wh{\nabla}_i^k - \mathbb{E}\big[\wh{\nabla}_i^k|\mc{F}^k\big]\|^2|\mc{F}^k\big] \nonumber\\
\stackrel{(i)}{\leq}&~
\mathbb{E}\big[\|\wh{\nabla}_i^k\|^2\big|\mc{F}^k\big]\nonumber\\
=&~
\mathbb{E}\bigg[\sum_{j=1}^m\1_{\{\tau_i^k = j\}}\left\|\nabla f_{i,j}\big(\mb{x}_{i}^{k}\big) - \nabla f_{i,j}\big(\mb{z}_{i,j}^{k}\big)\right\|^2\big|\mc{F}^k\bigg]\nonumber\\
\stackrel{(ii)}{=}&~\frac{1}{m}\sum_{j=1}^m\left\|\nabla f_{i,j}\big(\mb{x}_{i}^{k}\big) - \nabla f_{i,j}\big(\mb{z}_{i,j}^k\big)\right\|^2 \nonumber\\
\stackrel{(iii)}{\leq}&~\frac{L^2}{m}\sum_{j=1}^m\left\|\mb{x}_i^k - \mb{z}_{i,j}^k\right\|^2 \nonumber\\
\leq&~2L^2\left\|\mb{x}_i^k - \ol{\mb{x}}^k\right\|^2 + 2L^2 t_i^k. \label{BVR_i}
\end{align}
where~$(i)$ the conditional variance decomposition,~$(ii)$ uses that $\|\nabla f_{i,j}(\mb{x}_{i}^{k}) - \nabla f_{i,j}(\mb{z}_{i,j}^{k})\|^2$ is~$\F^k$-measurable and that~$\tau_i^k$ is independent of~$\F^k$, and~$(iii)$ uses the~$L$-smoothness of each~$f_{i,j}$. Summing up~\eqref{BVR_i} over~$i$ from~$1$ to~$n$ gives~\eqref{BVR1}. Towards~\eqref{BVR2}, we have:~$\forall k\geq0$,
\begin{align}\label{BVR20}
\mathbb{E}\big[\|\ol{\mb g}^k\|^2&|\mc{F}^k\big] 
\stackrel{(i)}{=}\mathbb{E}\big[\|\ol{\mb g}^k - \ol{\nabla\mb{f}}(\mb{x}^k)\|^2 |\mc{F}^k\big]
+\|\ol{\nabla\mb{f}}(\mb{x}^k)\|^2  \nonumber\\
\stackrel{(ii)}{=}&~\frac{1}{n^2}\mathbb{E}\big[\|\mb{g}^k - \nabla\mb{f}(\mb{x}^k)\|^2 |\mc{F}^k\big]
+\|\ol{\nabla\mb{f}}(\mb{x}^k)\|^2, 
\end{align}
where~$(i)$ uses that~$\mathbb{E}[\ol{\mb g}^k |\mc{F}^k] = \ol{\nabla\mb{f}}(\mb{x}^k)$ and that~$\ol{\nabla\mb{f}}(\mb{x}^k)$ is~$\mc{F}^k$-measurable while~$(ii)$ uses that, whenever~$i\neq j$, 
$\mbb{E}[\langle\mb{g}_i^k-\nabla f_i(\mb{x}_i^k),\mb{g}_j^k-\nabla f_j(\mb{x}_j^k)\rangle|\mc{F}^k] = 0,$  since~$\tau_i^k$ is independent of~$\sigma(\sigma(\tau_j^k),\mc{F}^k)$ and~$\mathbb{E}[\mb{g}^k |\mc{F}^k] =\nabla\mb{f}(\mb{x}^k)$. The proof follows by applying~\eqref{BVR1} to~\eqref{BVR20}.
\end{proof}

\subsection{A descent inequality}\label{sec_ds}
In this subsection, we provide a key descent inequality that characterizes the expected decrease of the global function value at each iteration of \textbf{\texttt{GT-SAGA}}.
\begin{lemma}\label{descent}
If~$0<\alpha\leq\frac{1}{2L}$, then~$\forall k\geq0$,
\begin{align*}
\mathbb{E}\big[F(\ol{\mb x}^{k+1})|\F^k\big] 
\leq&~F(\ol{\mb x}^{k})
-\frac{\alpha}{2}\|\nabla F(\ol{\mb x}^{k})\|^2
-\frac{\alpha}{4}\|\ol{\nabla\mb{f}}(\mb{x}^{k})\|^2 \nonumber\\
&+ \frac{\alpha L^2}{n}\|\mb{x}^k - \J\mb{x}^k\|^2
+ \frac{\alpha^2L^3}{n}t^k.
\end{align*}
\end{lemma}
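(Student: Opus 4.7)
The plan is to reduce the lemma to a standard quadratic upper bound for an $L$-smooth function applied along the averaged trajectory, and then handle the stochastic and consensus residuals with the two tools already available: Lemma~\ref{basic}(\ref{Lbound}) for the bias between $\ol{\nabla\mb{f}}(\mb x^k)$ and $\nabla F(\ol{\mb x}^k)$, and Lemma~\ref{BVR} inequality~\eqref{BVR2} for the second moment of $\ol{\mb g}^k$.

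First I would project the state recursion~\eqref{gtsaga_x} onto $\mb J$. Because $\ul{\W}$ is doubly stochastic, $(\mb{1}_n^\top/n)\W = \mb{1}_n^\top/n$, so averaging yields $\ol{\mb x}^{k+1} = \ol{\mb x}^k - \alpha\ol{\mb y}^{k+1}$, and Lemma~\ref{basic}(\ref{track}) converts this into the clean one-step recursion $\ol{\mb x}^{k+1} = \ol{\mb x}^k - \alpha\ol{\mb g}^k$. Applying $L$-smoothness of $F$ along this step gives
\begin{align*}
F(\ol{\mb x}^{k+1}) \leq F(\ol{\mb x}^k) - \alpha\langle \nabla F(\ol{\mb x}^k), \ol{\mb g}^k\rangle + \tfrac{L\alpha^2}{2}\|\ol{\mb g}^k\|^2.
\end{align*}
Conditioning on $\mc F^k$ and using $\mathbb{E}[\ol{\mb g}^k|\mc F^k] = \ol{\nabla\mb f}(\mb x^k)$ turns the inner product into $\langle \nabla F(\ol{\mb x}^k), \ol{\nabla\mb f}(\mb x^k)\rangle$.

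Next I would apply the polarization identity $2\langle a,b\rangle = \|a\|^2 + \|b\|^2 - \|a-b\|^2$ to this inner product. The cross term $\|\nabla F(\ol{\mb x}^k) - \ol{\nabla\mb f}(\mb x^k)\|^2$ is exactly what Lemma~\ref{basic}(\ref{Lbound}) bounds by $\frac{L^2}{n}\|\mb x^k - \J\mb x^k\|^2$, producing an intermediate estimate of the form
\begin{align*}
\mathbb{E}[F(\ol{\mb x}^{k+1})|\mc F^k] \leq F(\ol{\mb x}^k) - \tfrac{\alpha}{2}\|\nabla F(\ol{\mb x}^k)\|^2 - \tfrac{\alpha}{2}\|\ol{\nabla\mb f}(\mb x^k)\|^2 + \tfrac{\alpha L^2}{2n}\|\mb x^k - \J\mb x^k\|^2 + \tfrac{L\alpha^2}{2}\mathbb{E}[\|\ol{\mb g}^k\|^2|\mc F^k].
\end{align*}
Substituting inequality~\eqref{BVR2} from Lemma~\ref{BVR} for the last term produces extra contributions of $\frac{L^3\alpha^2}{n^2}\|\mb x^k - \J\mb x^k\|^2$, $\frac{L^3\alpha^2}{n}t^k$, and $\frac{L\alpha^2}{2}\|\ol{\nabla\mb f}(\mb x^k)\|^2$.

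The remaining step is constant bookkeeping under the step-size constraint $\alpha \leq \frac{1}{2L}$. The coefficient on $\|\ol{\nabla\mb f}(\mb x^k)\|^2$ becomes $-\frac{\alpha}{2} + \frac{L\alpha^2}{2} \leq -\frac{\alpha}{4}$, which gives the claimed negative term. The coefficient on $\|\mb x^k - \J\mb x^k\|^2$ becomes $\frac{\alpha L^2}{2n} + \frac{L^3\alpha^2}{n^2}$, and since $L\alpha \leq 1/2$ and $n \geq 1$ the second summand is dominated by $\frac{\alpha L^2}{2n}$, yielding the stated $\frac{\alpha L^2}{n}$. The $\frac{L^3\alpha^2}{n}t^k$ term is already in the desired form. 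I do not anticipate any real obstacle here beyond careful constant tracking; the only subtlety is recognizing that combining the polarization step with Lemma~\ref{BVR}(\ref{BVR2}) means one must save a factor $1/4$ (rather than $1/2$) on $\|\ol{\nabla\mb f}(\mb x^k)\|^2$ so that the $\frac{L\alpha^2}{2}\|\ol{\nabla\mb f}(\mb x^k)\|^2$ contribution from the stochastic variance bound can be absorbed under the step-size condition.
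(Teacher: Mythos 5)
Your proposal is correct and follows essentially the same route as the paper's proof: averaging the state recursion via double stochasticity and Lemma~\ref{basic}(\ref{track}), applying $L$-smoothness, conditioning, polarizing the inner product, and then invoking Lemma~\ref{basic}(\ref{Lbound}) and~\eqref{BVR2} before absorbing constants under $\alpha\leq\frac{1}{2L}$. The constant bookkeeping (including the absorption of $\frac{L\alpha^2}{2}\|\ol{\nabla\mb f}(\mb x^k)\|^2$ into the $-\frac{\alpha}{4}$ coefficient and of $\frac{\alpha^2 L^3}{n^2}$ into $\frac{\alpha L^2}{n}$) matches the paper exactly.
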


\begin{proof}
Since~$F$ is~$L$-smooth, we have~\cite{book_beck}:~$\forall\mb{x},\mb{y}\in\mathbb{R}^p$,
\begin{equation}\label{DL}
F(\mb y) \leq F(\mb x) + \langle \nabla F(\mb x), \mb{y} - \mb{x}\rangle
+ \frac{L}{2}\left\|\mb y - \mb x\right\|^2.
\end{equation}
We multiply~\eqref{gtsaga_x} by~$\frac{1}{n}(\mb{1}_n^\top\otimes\I_p)$ and use Lemma~\ref{basic}(\ref{track}) to obtain:
$\ol{\mb{x}}^{k+1} 
= \ol{\mb{x}}^{k} - \alpha\ol{\mb y}^{k+1}
= \ol{\mb{x}}^{k} - \alpha\ol{\mb g}^k,\forall k\geq0.$
Setting~$\mb{y} = \ol{\mb x}^{k+1}$ and~$\mb{x} = \ol{\mb x}^{k}$ in~\eqref{DL} obtains:~$\forall k\geq0$,
\begin{equation}\label{D0}
F(\ol{\mb x}^{k+1}) \leq F(\ol{\mb x}^{k}) -\alpha \langle \nabla F(\ol{\mb x}^{k}), \ol{\mb g}^k\rangle
+ \frac{\alpha^2 L}{2}\|\ol{\mb g}^k\|^2. 
\end{equation}
Conditioning~\eqref{D0} with respect to~$\mc{F}^k$, since~$\nabla F(\ol{\mb x}^k)$ is~$\mc{F}^k$-measurable, we have: 
\begin{align}\label{D1}
\mathbb{E}\left[F(\ol{\mb x}^{k+1}) | \mc{F}^k\right] \leq&~ F(\ol{\mb x}^{k})
-\alpha \left\langle \nabla F(\ol{\mb x}^{k}), \ol{\nabla\mb{f}}(\mb{x}^k)\right\rangle
\nonumber\\
&+ \frac{\alpha^2 L}{2}\mathbb{E}\left[\|\ol{\mb g}^k\|^2|\mc{F}^k\right].
\end{align}
Using~$2\langle \mb{a},\mb{b} \rangle = \|\mb a\|^2 + \|\mb b\|^2 
- \|\mb{a}-\mb{b}\|^2$,$\forall\mb{a},\mb{b}\in\mbb{R}^p$, in~\eqref{D1}, we obtain:~$\forall k\geq0$,
\begin{align}\label{D2}
\mathbb{E}&\big[F(\ol{\mb x}^{k+1})|\mc{F}^k\big] 
\leq F(\ol{\mb x}^{k})
-\frac{\alpha}{2}\|\nabla F(\ol{\mb x}^{k})\|^2 
-\frac{\alpha}{2}\|\ol{\nabla\mb{f}}(\mb{x}^{k})\|^2 \nonumber\\
&+ \frac{\alpha}{2} \|\nabla F(\ol{\mb x}^{k})-\ol{\nabla\mb{f}}(\mb{x}^k)\|^2 
+ \frac{\alpha^2 L}{2}\mathbb{E}\left[\|\ol{\mb g}^k\|^2|\mc{F}^k\right].
\end{align}
Applying Lemma~\ref{basic}(\ref{Lbound}) and~\eqref{BVR2} to~\eqref{D2}, we have:~$\forall k\geq0$,
\begin{align}\label{D3}
&\mathbb{E}[F(\ol{\mb x}^{k+1})|\mc{F}^k] \nonumber\\
\leq&~F(\ol{\mb x}^{k})
-\frac{\alpha}{2}\|\nabla F(\ol{\mb x}^{k})\|^2
-\frac{\alpha(1-\alpha L)}{2}\|\ol{\nabla\mb{f}}(\mb{x}^{k})\|^2 \nonumber\\
&+ \Big(\frac{\alpha L^2}{2n}+\frac{\alpha^2L^3}{n^2}\Big) \|\mb{x}^k - \J\mb{x}^k\|^2 
+ \frac{\alpha^2L^3}{n}t^k.
\end{align}
The proof follows by the fact that if~$0<\alpha\leq\frac{1}{2L}$, we have~$-\frac{\alpha\left(1-\alpha L\right)}{2}\leq-\frac{\alpha}{4}$ and~$\frac{\alpha L^2}{2n}+\frac{\alpha^2 L^3}{n^2}\leq\frac{\alpha L^2}{n}$.
\end{proof}
Compared with the corresponding descent inequality for centralized batch gradient descent~\cite{book_beck}, Lemma~\ref{descent} exhibits two additional bias terms, i.e.,~$\|\mb{x}^k-\J\mb{x}^k\|$ and~$t^k$, that are due to the decentralized nature of the problem and sampling. To establish the convergence of \textbf{\texttt{GT-SAGA}}, we therefore bound these bias terms by  $\|\ol{\nf}(\mb{x}^k)\|$ and show that they are dominated by the descent effect $-\|\ol{\nf}(\mb{x}^k)\|$.  

\subsection{Bounds on the auxiliary sequence~$t^k$}\label{sec_t}
In this subsection, we analyze the evolution of the auxiliary sequence~$t^k$ and establish useful bounds.
\begin{lemma}\label{aux} 
The following inequality holds:~$\forall k\geq0$,
\begin{align*}
\mbb{E}\left[t^{k+1}|\mc{F}^k\right]
\leq&~\theta t^k
+\Big(2\alpha^2+\frac{\alpha}{\beta}\Big)\|\ol{\nabla\mb{f}}(\mb{x}^k)\|^2 
\nonumber\\
&+\Big(\frac{2\alpha^2L^2}{n}+\frac{2}{m}\Big)\frac{1}{n}\|\mb{x}^{k}-\J\mb{x}^{k}\|^2,
\end{align*}
where the parameter~$\theta\in\mbb{R}$ is given by
\begin{align}\label{theta}
\theta := 1 - \frac{1}{m} + \alpha\beta + \frac{2\alpha^2L^2}{n},
\end{align}
and~$\beta>0$ is an arbitrary positive constant.
\end{lemma}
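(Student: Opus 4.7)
The plan is to compute $\mathbb{E}[t^{k+1}|\mathcal{F}^k]$ by a two-stage iterated expectation. Note that, by Assumption~\ref{sample}, the sampling indices $\{s_i^k\}_i$ are independent of $\{\tau_i^k\}_i$ and of $\mathcal{F}^k$, and that $\overline{\mathbf{x}}^{k+1}=\overline{\mathbf{x}}^k-\alpha\overline{\mathbf{g}}^k$ depends only on $\{\tau_i^k\}_i$ (through $\overline{\mathbf{g}}^k$) while $\mathbf{z}_{i,j}^{k+1}$ depends only on $\{s_i^k\}_i$. Averaging first over the $s_i^k$'s and using $\mathbf{z}_{i,j}^{k+1}=\mathbf{x}_i^k$ w.p.\ $1/m$ and $\mathbf{z}_{i,j}^{k+1}=\mathbf{z}_{i,j}^k$ otherwise, I obtain the decomposition
\begin{align*}
\mathbb{E}[t^{k+1}|\mathcal{F}^k]
=\tfrac{1}{nm}\sum_{i=1}^n\mathbb{E}\bigl[\|\overline{\mathbf{x}}^{k+1}-\mathbf{x}_i^k\|^2\big|\mathcal{F}^k\bigr]
+\tfrac{m-1}{m}\cdot\tfrac{1}{nm}\sum_{i,j}\mathbb{E}\bigl[\|\overline{\mathbf{x}}^{k+1}-\mathbf{z}_{i,j}^k\|^2\big|\mathcal{F}^k\bigr].
\end{align*}

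For the first sum, I would expand $\|\overline{\mathbf{x}}^{k+1}-\mathbf{x}_i^k\|^2=\|(\overline{\mathbf{x}}^k-\mathbf{x}_i^k)-\alpha\overline{\mathbf{g}}^k\|^2$ and sum over $i$; the cross term vanishes because $\sum_i(\overline{\mathbf{x}}^k-\mathbf{x}_i^k)=\mathbf{0}$, leaving $\|\mathbf{x}^k-\mathbf{J}\mathbf{x}^k\|^2+n\alpha^2\|\overline{\mathbf{g}}^k\|^2$. Taking conditional expectation and using Lemma~\ref{basic}(d) together with the variance bound~\eqref{BVR2} to control $\mathbb{E}[\|\overline{\mathbf{g}}^k\|^2|\mathcal{F}^k]$ yields a bound in terms of $\|\mathbf{x}^k-\mathbf{J}\mathbf{x}^k\|^2$, $t^k$, and $\|\overline{\nabla\mathbf{f}}(\mathbf{x}^k)\|^2$ with the right $1/m$ prefactor.

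For the second sum, the key step is the cross-term handling. Expanding $\|\overline{\mathbf{x}}^{k+1}-\mathbf{z}_{i,j}^k\|^2=\|(\overline{\mathbf{x}}^k-\mathbf{z}_{i,j}^k)-\alpha\overline{\mathbf{g}}^k\|^2$ and taking $\mathbb{E}[\cdot|\mathcal{F}^k]$, I use $\mathbb{E}[\overline{\mathbf{g}}^k|\mathcal{F}^k]=\overline{\nabla\mathbf{f}}(\mathbf{x}^k)$ to turn the cross term into $-2\alpha\langle\overline{\nabla\mathbf{f}}(\mathbf{x}^k),\overline{\mathbf{x}}^k-\mathbf{z}_{i,j}^k\rangle$, which I split via Young's inequality with parameter $\beta>0$ as
\begin{align*}
-2\alpha\bigl\langle\overline{\nabla\mathbf{f}}(\mathbf{x}^k),\overline{\mathbf{x}}^k-\mathbf{z}_{i,j}^k\bigr\rangle\leq\alpha\beta\|\overline{\mathbf{x}}^k-\mathbf{z}_{i,j}^k\|^2+\tfrac{\alpha}{\beta}\|\overline{\nabla\mathbf{f}}(\mathbf{x}^k)\|^2.
\end{align*}
This produces the $(1+\alpha\beta)$ factor on $\|\overline{\mathbf{x}}^k-\mathbf{z}_{i,j}^k\|^2$ (giving the $(m-1)/m\cdot(1+\alpha\beta)\le 1-1/m+\alpha\beta$ contribution to $\theta$) and the $\alpha/\beta$ contribution to the coefficient of $\|\overline{\nabla\mathbf{f}}(\mathbf{x}^k)\|^2$. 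The remaining $\alpha^2\mathbb{E}[\|\overline{\mathbf{g}}^k\|^2|\mathcal{F}^k]$ is again bounded by~\eqref{BVR2}, producing the extra $2\alpha^2L^2/n$ in $\theta$ (from the $t^k$ piece of~\eqref{BVR2}) and supplying the $\alpha^2$ term in the coefficient of $\|\overline{\nabla\mathbf{f}}(\mathbf{x}^k)\|^2$ as well as an additional $\alpha^2\cdot 2L^2/n^2$ contribution to the $\|\mathbf{x}^k-\mathbf{J}\mathbf{x}^k\|^2$ coefficient.

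Finally, I would collect coefficients and use loose bookkeeping bounds $(m-1)/m\le 1$ and $1/m\le 1$ where needed to match the stated expression for $\theta$ and for the coefficients $(2\alpha^2+\alpha/\beta)$ and $(2\alpha^2L^2/n+2/m)/n$. The only nontrivial step is the cross-term argument in the second sum, which crucially exploits the conditional unbiasedness of the SAGA estimator $\mathbb{E}[\mathbf{g}^k|\mathcal{F}^k]=\nabla\mathbf{f}(\mathbf{x}^k)$; everything else reduces to careful bookkeeping together with \eqref{BVR2}. The introduction of the free parameter $\beta$ is what gives the lemma its tunable form, to be exploited later when balancing descent against the drift of the gradient table.
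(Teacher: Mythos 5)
Your proposal is correct and follows essentially the same route as the paper's proof: the same iterated-expectation decomposition over the $s_i^k$'s, the same Young's inequality with parameter $\beta$ on the cross term for the $\mb{z}_{i,j}^k$ sum, and the same appeal to~\eqref{BVR2} for $\mathbb{E}[\|\ol{\mb g}^k\|^2|\mc{F}^k]$. The only (minor, favorable) deviation is in the first sum, where you exploit the exact cancellation $\sum_i(\ol{\mb{x}}^k-\mb{x}_i^k)=\mb{0}$ instead of the paper's per-$i$ Young bound, which yields slightly tighter constants that are still absorbed by the stated coefficients $2/m$ and $2\alpha^2+\alpha/\beta$.
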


\begin{proof}
We define~$\mc{A}^k := \sigma\left(\cup_{i=1}^n\sigma(\tau_i^k),\mc{F}^k\right)$ and clearly $\F^k\subseteq\mc{A}^k$.
By the tower property of the conditional expectation, we have:~$\forall i\in\mc{V},\forall k\geq0$, 
\begin{align}\label{p0}
\mbb{E}\big[t_i^{k+1}|\mc{F}^k\big] 
=\frac{1}{m}\sum_{j=1}^m\E\big[\mathbb{E}\big[\|\ol{\mb{x}}^{k+1} - \mb{z}_{i,j}^{k+1}\|^2|\mc{A}^k\big]|\mc{F}^k\big].
\end{align}
Since~$s_i^k$ is independent of~$\mc{A}^k$ under Assumption~\ref{sample}, we have:~$\forall i\in\mc{V},\forall j\in\{1,\cdots,m\}, k\geq0$, 
\begin{align}\label{p_indep}
\mbb{E}\Big[\1_{\{s_i^k = j\}} | \mc{A}^k\Big] = \frac{1}{m} 
~\mbox{and}~ 
\mbb{E}\Big[\1_{\{s_i^k \neq j\}} | \mc{A}^k\Big] = 1 - \frac{1}{m}. 
\end{align}
In light of~\eqref{p_indep}, we have:~$\forall i\in\mc{V},\forall j\in\{1,\cdots,m\}, k\geq0$,
\begin{align}\label{p0_1}
&\mathbb{E}\left[\|\ol{\mb{x}}^{k+1} - \mb{z}_{i,j}^{k+1}\|^2|\mc{A}^k\right]
\nonumber\\
=&~\mathbb{E}\Big[\left\|\ol{\mb{x}}^{k+1} - \left(\1_{\{s_i^k = j\}}\mb{x}_i^k + \1_{\{s_i^k \neq j\}}\mb{z}_{i,j}^k\right)\right\|^2\Big| \mc{A}^k\Big] \nonumber\\
=&~\mathbb{E}\big[\|\ol{\mb{x}}^{k+1}\|^2|\mc{A}^k\big] + \E\Big[\left\|\1_{\{s_i^k = j\}}\mb{x}_i^k 
+ \1_{\{s_i^k \neq j\}}\mb{z}_{i,j}^k\right\|^2\Big|\mc{A}^k\Big]\nonumber\\
&-2\E\Big[\left\langle \ol{\mb{x}}^{k+1}, \1_{\{s_i^k = j\}}\mb{x}_i^k + \1_{\{s_i^k \neq j\}}\mb{z}_{i,j}^k\right\rangle\Big|\mc{A}^k\Big] \nonumber\\
\stackrel{(i)}{=}&\|\ol{\mb{x}}^{k+1}\|^2 -2\bigg\langle \ol{\mb{x}}^{k+1}, \frac{1}{m}\mb{x}_i^k + \Big(1-\frac{1}{m}\Big)\mb{z}_{i,j}^k\bigg\rangle  \nonumber\\
&+ \frac{1}{m}\|\mb{x}_i^k\|^2 + \Big(1-\frac{1}{m}\Big)\|\mb{z}_{i,j}^k\|^2, \nonumber\\
=&~\frac{1}{m}\|\ol{\mb{x}}^{k+1} - \mb{x}_i^k\|^2
+ \Big(1-\frac{1}{m}\Big)\|\ol{\mb{x}}^{k+1} - \mb{z}_{i,j}^k\|^2
\end{align}
where~$(i)$ uses~\eqref{p_indep} and that~$\ol{\mb x}^{k+1}, \mb{x}_i^k$, and~$\mb{z}_{i,j}^k$ are~$\mc{A}^k$-measurable. Using~\eqref{p0_1} in~\eqref{p0}, we obtain:~$\forall i\in\mc{V},\forall k\geq0$,
\begin{align}\label{p0_2}
\mbb{E}\big[t_i^{k+1}&|\mc{F}^k\big] 
= \frac{1}{m}\mbb{E}\left[\left\|\ol{\mb{x}}^{k+1} - \mb{x}_i^{k}\right\|^2\Big|\mc{F}^k\right] \nonumber\\
&+ \Big(1-\frac{1}{m}\Big)\frac{1}{m}\sum_{j=1}^m\mbb{E}\left[\left\|\ol{\mb{x}}^{k+1} - \mb{z}_{i,j}^{k}\right\|^2\Big|\mc{F}^k\right]. 
\end{align}
We next bound the two terms on the RHS of~\eqref{p0_2} separately.
For the first term, we have:~$\forall i\in\mc{V}$,~$k\geq0$,
\begin{align}\label{p1_1}
&\mbb{E}\big[\|\ol{\mb{x}}^{k+1} - \mb{x}_i^{k}\|^2|\mc{F}^k\big] \nonumber\\
=&~\mbb{E}\big[\|\ol{\mb{x}}^{k+1} - \ol{\mb{x}}^{k} + \ol{\mb{x}}^{k} - \mb{x}_i^k\|^2|\mc{F}^k\big]
 \nonumber\\
=&~\alpha^2\mbb{E}\big[\|\ol{\mb g}^k\|^2|\mc{F}^k\big]
-2\big\langle\alpha\ol{\nabla\mb{f}}(\mb{x}^k),\ol{\mb{x}}^{k} - \mb{x}_i^k\big\rangle
+\|\ol{\mb{x}}^{k} - \mb{x}_i^{k}\|^2 \nonumber\\
\leq&~\alpha^2\mbb{E}\big[\|\ol{\mb g}^k\|^2|\mc{F}^k\big]
+\alpha^2\|\ol{\nabla\mb{f}}(\mb{x}^k)\|^2
+2\|\mb{x}_i^{k}-\ol{\mb{x}}^{k}\|^2, 
\end{align}
where the last line uses the Cauchy-Schwarz inequality. 
Towards the second term on the RHS of~\eqref{p0_2}, we have:~$\forall i\in\mc{V}$,~$j\in\{1,\cdots,m\}$,~$\forall k\geq0$,~$\forall \beta>0$,
\begin{align}\label{p1_2}
&\mbb{E}\big[\|\ol{\mb{x}}^{k+1} - \mb{z}_{i,j}^{k}\|^2|\mc{F}^k\big]
\nonumber\\
=&~\mbb{E}\big[\|\ol{\mb{x}}^{k+1}-\ol{\mb{x}}^{k}+\ol{\mb{x}}^{k} - \mb{z}_{i,j}^{k}\|^2\big|\mc{F}^k\big] \nonumber\\
=&~\alpha^2\mbb{E}\big[\|\ol{\mb g}^k\|^2|\mc{F}^k\big]
- 2\alpha\big\langle\ol{\nabla\mb{f}}(\mb{x}^k),\ol{\mb{x}}^{k} - \mb{z}_{i,j}^{k}\big\rangle
+ \|\ol{\mb{x}}^{k} - \mb{z}_{i,j}^{k}\|^2 \nonumber\\
\leq&~\alpha^2\mbb{E}\big[\|\ol{\mb g}^k\|^2|\mc{F}^k\big]
+ (1+\alpha\beta)\|\ol{\mb{x}}^{k} - \mb{z}_{i,j}^{k}\|^2 + \frac{\alpha}{\beta}\|\ol{\nabla\mb{f}}(\mb{x}^k)\|^2,
\end{align}
where the last line uses Young's inequality. 
Now, we apply~\eqref{p1_1} and~\eqref{p1_2} to~\eqref{p0_2} to obtain:~$\forall i\in\mc{V}$,~$\forall k\geq0$,
\begin{align}\label{p2}
&\mbb{E}\big[t_i^{k+1}|\mc{F}^k\big]
\leq\Big(1-\frac{1}{m}\Big)(1+\alpha\beta)t_i^k+\alpha^2\mbb{E}\big[\|\ol{\mb{g}}^k\|^2|\mc{F}^k\big] \nonumber\\
&+\frac{2}{m}\|\mb{x}_i^{k}-\ol{\mb{x}}^{k}\|^2+\Big(\frac{\alpha^2}{m}+\Big(1-\frac{1}{m}\Big)\frac{\alpha}{\beta}\Big)\|\ol{\nabla\mb{f}}(\mb{x}^k)\|^2. 
\end{align}
We average~\eqref{p2} over~$i$ from~$1$ to~$n$ and use~\eqref{BVR2} in the resulting inequality to obtain:~$\forall k\geq0$,
\begin{align}\label{p3}
\mbb{E}\big[t^{k+1}|\mc{F}^k\big] 
\leq&~\Big(\frac{2\alpha^2L^2}{n}+\frac{2}{m}\Big)\frac{1}{n}\|\mb{x}^{k}-\J\mb{x}^{k}\|^2 \nonumber\\
+&\left(\alpha^2+\frac{\alpha^2}{m}+\Big(1-\frac{1}{m}\Big)\frac{\alpha}{\beta}\right)\left\|\ol{\nabla\mb{f}}(\mb{x}^k)\right\|^2 \nonumber\\
+&\left(\frac{2\alpha^2L^2}{n}+\Big(1-\frac{1}{m}\Big)(1+\alpha\beta)\right)t^k.
\end{align}
We conclude by using~$\frac{1}{m}+1\leq2$ and~$1-\frac{1}{m}\leq1$ in~\eqref{p3}.
\end{proof}
Next, we specify some particular choices of~$\beta$ and the range of~$\alpha$ in Lemma~\ref{aux} to obtain useful bounds on the auxiliary sequence~$t^k$. The following corollary shows that~$t^k$ has an intrinsic contraction property. 
\begin{corollary}\label{aux_ctr}
If~$0<\alpha\leq\frac{\sqrt{n}}{\sqrt{8m}L}$, then~$\forall k\geq0$,
\begin{align*}
\mbb{E}\left[t^{k+1}|\mc{F}^k\right]
\leq&~
\Big(1-\frac{1}{4m}\Big)t^k
+4m\alpha^2\|\ol{\nabla\mb{f}}(\mb{x}^k)\|^2 \nonumber\\
&+\frac{9}{4mn}\|\mb{x}^{k}-\J\mb{x}^{k}\|^2.
\end{align*}
\end{corollary}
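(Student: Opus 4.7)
My plan is to specialize Lemma~\ref{aux} by making a clever choice of the free parameter $\beta$ and then exploit the stated step-size restriction $\alpha \le \sqrt{n}/(\sqrt{8m}L)$ to absorb the resulting terms into the target constants. In Lemma~\ref{aux}, the contraction parameter is $\theta = 1 - \tfrac{1}{m} + \alpha\beta + \tfrac{2\alpha^2L^2}{n}$, so to land on $1-\tfrac{1}{4m}$ I must distribute the $\tfrac{3}{4m}$ budget between the two perturbation terms. The natural choice is $\beta := \tfrac{1}{2\alpha m}$, which gives $\alpha\beta = \tfrac{1}{2m}$ and leaves $\tfrac{1}{4m}$ for the remaining $\tfrac{2\alpha^2L^2}{n}$ term.

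First I would verify that under $\alpha \le \sqrt{n}/(\sqrt{8m}L)$, we have $\tfrac{2\alpha^2L^2}{n} \le \tfrac{1}{4m}$, so that $\theta \le 1 - \tfrac{1}{4m}$ with the above $\beta$. Next I would substitute $\beta = \tfrac{1}{2\alpha m}$ into the coefficient of $\|\ol{\nabla\mb{f}}(\mb{x}^k)\|^2$, obtaining $2\alpha^2 + \tfrac{\alpha}{\beta} = 2\alpha^2 + 2m\alpha^2 = 2(m+1)\alpha^2 \le 4m\alpha^2$, which holds since $m \ge 1$. Finally, for the consensus error coefficient, I would reuse the bound $\tfrac{2\alpha^2L^2}{n} \le \tfrac{1}{4m}$ to get $\tfrac{2\alpha^2L^2}{n} + \tfrac{2}{m} \le \tfrac{1}{4m} + \tfrac{2}{m} = \tfrac{9}{4m}$, so the full coefficient is at most $\tfrac{9}{4mn}$.

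Assembling these three bounds inside Lemma~\ref{aux} yields the desired inequality directly. There is no real obstacle here, as the result is purely algebraic book-keeping on Lemma~\ref{aux}; the only subtle point is recognizing that the step-size threshold $\sqrt{n}/(\sqrt{8m}L)$ is tuned precisely to make the $\alpha^2L^2/n$ perturbation to $\theta$ match the $1/(4m)$ slack left over after choosing $\beta$ so that $\alpha\beta$ eats half of the $1/m$ contraction. Thus the proof will be a short two-line derivation once the choice $\beta = 1/(2\alpha m)$ is made.
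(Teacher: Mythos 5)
Your proposal is correct and follows exactly the paper's own proof: the authors also set $\beta = \tfrac{1}{2m\alpha}$ in Lemma~\ref{aux}, use $\alpha \le \sqrt{n}/(\sqrt{8m}L)$ to get $\tfrac{2\alpha^2L^2}{n}\le\tfrac{1}{4m}$, and derive the same three coefficient bounds. Nothing to add.
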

\begin{proof}
We choose~$\beta = \frac{1}{2m\alpha}$ in Lemma~\ref{aux} to obtain: if~$0<\alpha\leq\frac{\sqrt{n}}{\sqrt{8m}L}$, i.e.,~$\frac{2\alpha^2 L^2}{n}\leq\frac{1}{4m}$, then
\begin{align}
&\theta = 1 - \frac{1}{m} + \alpha\beta + \frac{2\alpha^2L^2}{n}
\leq 1 - \frac{1}{4m}. \label{aux_c1}\\
&2\alpha^2 + \frac{\alpha}{\beta} = 2\alpha^2 + 2m\alpha^2\leq4m\alpha^2. \label{aux_c2} \\
&\frac{2\alpha^2L^2}{n} + \frac{2}{m} \leq \frac{1}{4m} + \frac{2}{m} = \frac{9}{4m}.
\label{aux_c3}
\end{align}
We conclude by applying~\eqref{aux_c1},~\eqref{aux_c2},~\eqref{aux_c3} to Lemma~\ref{aux}.
\end{proof}

The following corollary of Lemma~\ref{aux} will be only used to bound~$\mathbb{E}[\|\mb{g}^{k+1}-\nabla\mb{f}(\mb{x}^{k+1})\|^2|\mc{F}^k]$. 
\begin{corollary}\label{aux_1}
If~$0<\alpha\leq\frac{\sqrt{n}}{\sqrt{8m}L}$, then~$\forall k\geq0$,
\begin{align*}
\mbb{E}\big[t^{k+1}|\mc{F}^k\big]
\leq&~
2t^k
+3\alpha^2\|\ol{\nabla\mb{f}}(\mb{x}^k)\|^2 
+\frac{9}{4mn}\|\mb{x}^{k}-\J\mb{x}^{k}\|^2.
\end{align*}
\end{corollary}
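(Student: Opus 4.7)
The plan is to instantiate Lemma~\ref{aux} with a single, carefully chosen value of the free parameter $\beta$. Reading off the desired bound, the coefficient of $\|\ol{\nabla\mb{f}}(\mb{x}^k)\|^2$ should be $3\alpha^2$, and since Lemma~\ref{aux} supplies $(2\alpha^2 + \alpha/\beta)$ for this coefficient, the natural choice is $\beta = 1/\alpha$, which makes $\alpha/\beta = \alpha^2$ and yields exactly $3\alpha^2$. Note that this differs from Corollary~\ref{aux_ctr}, which instead picks $\beta = 1/(2m\alpha)$ in order to preserve a strict contraction in $t^k$; here we deliberately trade the contraction for a tighter gradient coefficient.

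Having fixed $\beta = 1/\alpha$, I would verify the three coefficient bounds using the step-size hypothesis $0 < \alpha \leq \sqrt{n}/(\sqrt{8m}\,L)$, which is equivalent to $2\alpha^2 L^2/n \leq 1/(4m)$. First, for the $t^k$ coefficient $\theta$ defined in~\eqref{theta}, the substitution $\alpha\beta = 1$ yields $\theta = 2 - 1/m + 2\alpha^2 L^2/n \leq 2 - 1/m + 1/(4m) \leq 2$. Second, the $\|\ol{\nabla\mb{f}}(\mb{x}^k)\|^2$ coefficient equals $2\alpha^2 + \alpha/\beta = 3\alpha^2$ by construction. Third, the $\frac{1}{n}\|\mb{x}^k - \J\mb{x}^k\|^2$ coefficient satisfies $2\alpha^2 L^2/n + 2/m \leq 1/(4m) + 2/m = 9/(4m)$, matching the target $9/(4mn)$ after factoring the $1/n$.

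Plugging these three bounds directly into the conclusion of Lemma~\ref{aux} delivers the stated inequality. The step is essentially algebraic substitution, so no real obstacle is anticipated; the content of the corollary is a change of bookkeeping, not new analysis. The only point worth flagging is consistency of the step-size range: since the condition $\alpha \leq \sqrt{n}/(\sqrt{8m}\,L)$ coincides exactly with the one used in Corollary~\ref{aux_ctr}, both corollaries can be invoked simultaneously in the subsequent analysis without any loss or mismatch, which is precisely what the overall convergence argument will need when bounding $\mathbb{E}[\|\mb{g}^{k+1}-\nabla\mb{f}(\mb{x}^{k+1})\|^2|\mc{F}^k]$ via this looser but gradient-friendlier estimate on $t^{k+1}$.
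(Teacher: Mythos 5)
Your proposal is correct and follows exactly the paper's own argument: the paper likewise sets $\beta = 1/\alpha$ in Lemma~\ref{aux}, checks $\theta \leq 2$ via $2\alpha^2L^2/n \leq 1/(4m)$, notes $2\alpha^2 + \alpha/\beta = 3\alpha^2$, and bounds the consensus coefficient by $9/(4m)$. Your added remark contrasting this choice of $\beta$ with the one in Corollary~\ref{aux_ctr} is accurate but not needed for the proof itself.
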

\begin{proof}
Setting~$\beta = 1/\alpha$ in Lemma~\ref{aux}, we have: if~$0<\alpha\leq\frac{\sqrt{n}}{\sqrt{8m}L}$, i.e.,~$\frac{2\a^2L^2}{n}\leq\frac{1}{4m}$, then
\begin{align}
&\theta = 1 - \frac{1}{m} +\a\beta + \frac{2L^2\alpha^2}{n}
\leq2  \label{aux_b1} \\
&2\a^2 + \frac{\a}{\beta} = 3\a^2 \label{aux_b2} \\
&\frac{2\alpha^2 L^2}{n} + \frac{2}{m} \leq\frac{1}{4m} + \frac{2}{m}  = \frac{9}{4m} ,\label{aux_b3}
\end{align}
We conclude by applying~\eqref{aux_b1},~\eqref{aux_b2},~\eqref{aux_b3} to Lemma~\ref{aux}.
\end{proof}

With the help of~\eqref{consensus2},~\eqref{BVR1} and Corollary~\ref{aux_1}, we provide an upper bound on $\mathbb{E}[\|\mb{g}^{k+1}-\nabla\mb{f}(\mb{x}^{k+1})\|^2|\mc{F}^k]$.
\begin{lemma}\label{BVR3}
If~$0<\alpha\leq\frac{\sqrt{n}}{\sqrt{8m}L}$, then~$\forall k\geq0$,
\begin{align*}
\mathbb{E}&[\|\mb{g}^{k+1}-\nabla\mb{f}(\mb{x}^{k+1})\|^2|\mc{F}^k]
\leq
8.5L^2\|\mb{x}^{k} - \J\mb{x}^{k}\|^2 \!+ 4nL^2 t^k
 \nonumber\\
&+ 6n\alpha^2 L^2\|\ol{\nabla\mb{f}}(\mb{x}^k)\|^2
+ 4\alpha^2 L^2\mbb{E}\big[\|\mb{y}^{k+1}-\J\mb{y}^{k+1}\|^2|\mc{F}^k\big]. 
\end{align*}
\end{lemma}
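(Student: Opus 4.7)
The plan is to bound $\mbb{E}[\|\mb{g}^{k+1}-\nabla\mb{f}(\mb{x}^{k+1})\|^2|\F^k]$ by invoking the tower property of conditional expectations and combining three earlier ingredients: the one-step variance bound of Lemma~\ref{BVR}, the consensus recursion~\eqref{consensus2}, and the auxiliary-sequence bound of Corollary~\ref{aux_1}.

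First, since $\F^k \subseteq \F^{k+1}$, I would write
\begin{align*}
\mbb{E}\big[\|\mb{g}^{k+1}-\nabla\mb{f}(\mb{x}^{k+1})\|^2|\F^k\big] = \mbb{E}\Big[\mbb{E}\big[\|\mb{g}^{k+1}-\nabla\mb{f}(\mb{x}^{k+1})\|^2|\F^{k+1}\big]\Big|\F^k\Big]
\end{align*}
and apply Lemma~\ref{BVR} with the index shifted by one to dominate the inner conditional expectation by $2L^2\|\mb{x}^{k+1}-\J\mb{x}^{k+1}\|^2 + 2nL^2 t^{k+1}$. This is valid because $\mb{x}^{k+1}$ and each $\mb{z}_{i,j}^{k+1}$ are $\F^{k+1}$-measurable, exactly the hypothesis needed to rerun the derivation of Lemma~\ref{BVR} at time $k+1$.

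Next, I would control $\mbb{E}[\|\mb{x}^{k+1}-\J\mb{x}^{k+1}\|^2|\F^k]$ and $\mbb{E}[t^{k+1}|\F^k]$ separately. For the consensus error, applying~\eqref{consensus2} together with $\lambda^2\leq 1$ yields
\begin{align*}
\mbb{E}\big[\|\mb{x}^{k+1}-\J\mb{x}^{k+1}\|^2|\F^k\big] \leq 2\|\mb{x}^{k}-\J\mb{x}^{k}\|^2 + 2\alpha^2 \mbb{E}\big[\|\mb{y}^{k+1}-\J\mb{y}^{k+1}\|^2|\F^k\big].
\end{align*}
For the auxiliary sequence, I would invoke Corollary~\ref{aux_1} under the stipulated step-size $0<\alpha\leq \sqrt{n}/(\sqrt{8m}L)$, giving $\mbb{E}[t^{k+1}|\F^k] \leq 2t^k + 3\alpha^2\|\ol{\nabla\mb{f}}(\mb{x}^k)\|^2 + \tfrac{9}{4mn}\|\mb{x}^{k}-\J\mb{x}^{k}\|^2$.

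Finally, I would multiply the consensus bound by $2L^2$ and the auxiliary bound by $2nL^2$, then add them. The coefficient on $\|\mb{x}^{k}-\J\mb{x}^{k}\|^2$ aggregates to $4L^2 + \tfrac{9L^2}{2m} \leq 8.5L^2$ since $m\geq 1$, reproducing the leading term, while the remaining contributions $4nL^2 t^k$, $6n\alpha^2 L^2\|\ol{\nabla\mb{f}}(\mb{x}^k)\|^2$, and $4\alpha^2 L^2\mbb{E}[\|\mb{y}^{k+1}-\J\mb{y}^{k+1}\|^2|\F^k]$ fall out directly. I do not foresee any genuinely hard step here; the main care needed is in correctly sequencing the conditional expectations and in bookkeeping the constants, in particular verifying that the $1/m$ contribution from Corollary~\ref{aux_1} combines cleanly with the $4L^2$ from the consensus bound to produce the advertised $8.5L^2$ coefficient under $m\geq 1$.
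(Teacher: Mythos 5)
Your proposal is correct and follows essentially the same route as the paper's proof: tower property over $\F^{k+1}$, then \eqref{BVR1} at time $k+1$, then \eqref{consensus2} and Corollary~\ref{aux_1} at time $k$, with the same constant bookkeeping ($4L^2 + \tfrac{9L^2}{2m}\leq 8.5L^2$, $2nL^2\cdot 2 = 4nL^2$, $2nL^2\cdot 3\alpha^2 = 6n\alpha^2L^2$, $2L^2\cdot 2\alpha^2 = 4\alpha^2L^2$). No gaps.
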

\begin{proof}
By the tower property of the conditional expectation, we have:~$\forall k\geq0$,
\begin{align*}
&\mathbb{E}\left[\|\mb{g}^{k+1} - \nabla\mb{f}(\mb{x}^{k+1})\|^2|\mc{F}^k\right]
\nonumber\\
=&~\mbb{E}\left[\mathbb{E}\left[\|\mb{g}^{k+1}  -\nabla\mb{f}(\mb{x}^{k+1})\|^2|\mc{F}^{k+1}\right]|\mc{F}^{k}\right] \nonumber\\
\leq&~2L^2\mbb{E}\left[\|\mb{x}^{k+1} - \J\mb{x}^{k+1}\|^2|\mc{F}^k\right]
+ 2nL^2\mbb{E}\left[t^{k+1}|\mc{F}^k\right] \nonumber\\
\leq&~2L^2\left(2\|\mb{x}^{k}-\J\mb{x}^{k}\|^2 + 2\alpha^2\mbb{E}\left[\|\mb{y}^{k+1}-\J\mb{y}^{k+1}\|^2|\mc{F}^k\right]\right)  \nonumber\\
&+ 2nL^2\Big(2t^k
+3\alpha^2\|\ol{\nabla\mb{f}}(\mb{x}^k)\|^2 
+\frac{9}{4mn}\|\mb{x}^{k}-\J\mb{x}^{k}\|^2\Big),
\end{align*}
where the second line uses~\eqref{BVR1} and the third line uses~\eqref{consensus2} and Corollary~\ref{aux_1}. The desired inequality then follows.
\end{proof}

\subsection{Bounds on stochastic gradient tracking process}\label{sec_sgt}
In this subsection, we analyze the variance-reduced stochastic gradient tracking process~\eqref{gtsaga_y}. 

\begin{lemma}\label{gt0}
The following inequality holds:~$\forall k\geq0$,
\begin{align*}
&\E\big[\|\mb{y}^{k+2}-\J\mb{y}^{k+2}\|^2\big] \nonumber\\
\leq&~\lambda^2\E\big[\|\mb{y}^{k+1}-\J\mb{y}^{k+1}\|^2\big] 
+\lambda^2\E\big[\|\mb{g}^{k+1} - \mb{g}^{k}\|^2\big]
\nonumber\\
+&~2\E\big[\left\langle(\W-\J)\mb{y}^{k+1},\left(\W-\J\right)\left(\nf(\x^{k+1}) - \nf(\x^{k})\right)\right\rangle\big] \nonumber\\
+&~2\E\big[\left\langle(\W-\J)\mb{y}^{k+1},\left(\W-\J\right)\left(\nf(\x^{k}) - \mb{g}^{k}\right)\right\rangle\big].
\end{align*}
\end{lemma}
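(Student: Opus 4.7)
The plan is to start from the gradient-tracking recursion~\eqref{gtsaga_y} evaluated at step $k+1$, namely $\mb{y}^{k+2} = \W(\mb{y}^{k+1} + \mb{g}^{k+1} - \mb{g}^{k})$, and to peel off the mean using the standard mixing identity $\J\W = \W\J = \J$ (valid since $\ul{\W}$ is doubly stochastic so $\J$ projects onto the eigenspace of $\W$ with eigenvalue $1$). This gives the clean algebraic identity
\begin{align*}
\mb{y}^{k+2} - \J\mb{y}^{k+2} = (\W - \J)\mb{y}^{k+1} + (\W - \J)(\mb{g}^{k+1} - \mb{g}^{k}).
\end{align*}
Taking $\|\cdot\|^2$ and expanding via $\|\mb{a}+\mb{b}\|^2 = \|\mb{a}\|^2 + \|\mb{b}\|^2 + 2\langle \mb{a},\mb{b}\rangle$ is then the natural next move.

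For the two quadratic pieces, I would apply Lemma~\ref{basic}(\ref{W}) twice. On $(\W-\J)\mb{y}^{k+1}$ it gives directly $\|(\W-\J)\mb{y}^{k+1}\|^2 \le \lambda^2\|\mb{y}^{k+1}-\J\mb{y}^{k+1}\|^2$, which is the first term on the right-hand side. On $(\W-\J)(\mb{g}^{k+1}-\mb{g}^{k})$ the same lemma combined with $\|\mb{v}-\J\mb{v}\| \le \|\mb{v}\|$ (since $\J$ is an orthogonal projection) yields $\|(\W-\J)(\mb{g}^{k+1}-\mb{g}^{k})\|^2 \le \lambda^2\|\mb{g}^{k+1}-\mb{g}^{k}\|^2$, matching the second term.

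The heart of the argument is the cross term. I would decompose
\begin{align*}
\mb{g}^{k+1} - \mb{g}^{k} = \bigl(\nf(\x^{k+1}) - \nf(\x^{k})\bigr) + \bigl(\nf(\x^{k}) - \mb{g}^{k}\bigr) + \bigl(\mb{g}^{k+1} - \nf(\x^{k+1})\bigr),
\end{align*}
which splits $2\langle (\W-\J)\mb{y}^{k+1},(\W-\J)(\mb{g}^{k+1}-\mb{g}^{k})\rangle$ into the two inner products that appear in the lemma plus a residual involving $\mb{g}^{k+1} - \nf(\x^{k+1})$. Taking full expectation, I would kill the residual by conditioning on $\F^{k+1}$: both $\mb{y}^{k+1}$ and $\x^{k+1}$ are $\F^{k+1}$-measurable, and by Lemma~\ref{basic}(d) the SAGA estimator satisfies $\E[\mb{g}^{k+1}\mid \F^{k+1}] = \nf(\x^{k+1})$, so the residual has conditional expectation zero. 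Combining this with the two quadratic bounds yields the stated inequality.

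The step that really carries the content is the last one: it is precisely the conditional unbiasedness of the SAGA correction that lets us replace $\mb{g}^{k+1}$ by $\nf(\x^{k+1})$ inside the cross term in expectation, which is what prevents the variance of $\mb{g}^{k+1}$ from feeding into the tracking recursion and ultimately enables the separation, highlighted in Remark~\ref{rmk_hete}, between the network and heterogeneity effects. The rest is bookkeeping, so I do not anticipate any further obstacle.
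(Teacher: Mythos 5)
Your proposal is correct and follows essentially the same route as the paper: the identity $\mb{y}^{k+2}-\J\mb{y}^{k+2}=(\W-\J)\mb{y}^{k+1}+(\W-\J)(\mb{g}^{k+1}-\mb{g}^{k})$, the two $\lambda^2$ bounds via Lemma~\ref{basic}(\ref{W}), and the elimination of the $\mb{g}^{k+1}-\nf(\x^{k+1})$ residual in the cross term by conditioning on $\F^{k+1}$ and using $\E[\mb{g}^{k+1}\mid\F^{k+1}]=\nf(\x^{k+1})$. The paper performs this last step by first replacing $\mb{g}^{k+1}$ with its conditional mean and then splitting $\nf(\x^{k+1})-\mb{g}^{k}$, which is the same computation in a slightly different order.
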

\begin{proof}
Using~\eqref{gtsaga_y} and~$\J\W = \J$, we have:~$\forall k\geq0$,
\begin{align}\label{y0}
&\|\mb{y}^{k+2}-\J\mb{y}^{k+2}\|^2 \nonumber\\
=&~\|\W\mb{y}^{k+1}-\J\mb{y}^{k+1}+\left(\W-\J\right)\left(\mb{g}^{k+1} - \mb{g}^{k}\right)\|^2 \nonumber\\
=&~\|\W\mb{y}^{k+1}-\J\mb{y}^{k+1}\|^2 +\|\left(\W-\J\right)\left(\mb{g}^{k+1} - \mb{g}^{k}\right)\|^2 \nonumber\\
&+2\left\langle\W\mb{y}^{k+1}-\J\mb{y}^{k+1},\left(\W-\J\right)\left(\mb{g}^{k+1} - \mb{g}^{k}\right)\right\rangle   \nonumber\\
\leq&~\lambda^2\|\mb{y}^{k+1}-\J\mb{y}^{k+1}\|^2 +\lambda^2\|\mb{g}^{k+1} - \mb{g}^{k}\|^2
\nonumber\\
&+2\left\langle\W\mb{y}^{k+1}-\J\mb{y}^{k+1},\left(\W-\J\right)\left(\mb{g}^{k+1} - \mb{g}^{k}\right)\right\rangle,
\end{align}
where the last line uses Lemma~\ref{basic}(\ref{W}) and $\left\|\W-\J\right\|= \lambda$. To proceed, we observe that~$\forall k\geq0$,
\begin{align}\label{y0_1}
&\E\left[\left\langle\W\mb{y}^{k+1}-\J\mb{y}^{k+1},\left(\W-\J\right)\left(\mb{g}^{k+1} - \mb{g}^{k}\right)\right\rangle|\F^{k+1}\right] \nonumber\\
=&~\left\langle\W\mb{y}^{k+1}-\J\mb{y}^{k+1},\left(\W-\J\right)\left(\nf(\x^{k+1}) - \mb{g}^{k}\right)\right\rangle \nonumber\\
=&~\left\langle\W\mb{y}^{k+1}-\J\mb{y}^{k+1},\left(\W-\J\right)\left(\nf(\x^{k+1}) - \nf(\x^{k})\right)\right\rangle \nonumber\\
&+\left\langle\W\mb{y}^{k+1}-\J\mb{y}^{k+1},\left(\W-\J\right)\left(\nf(\x^{k}) - \mb{g}^{k}\right)\right\rangle,
\end{align}
where the first line uses that~$\E[\g^{k+1}|\F^{k+1}] = \nf(\x^{k+1})$ and that~$\y^{k+1}$ and~$\g^{k}$ are~$\F^{k+1}$-measurable for all~$k\geq0$. We conclude by using~\eqref{y0_1} in~\eqref{y0} and taking the expectation.
\end{proof}

We next bound the third term in Lemma~\ref{gt0}.
\begin{lemma}\label{gt1}
The following inequality holds:~$\forall k\geq0$,
\begin{align*}
&\left\langle\W\mb{y}^{k+1}-\J\mb{y}^{k+1},\left(\W-\J\right)\left(\nf(\x^{k+1}) - \nf(\x^{k})\right)\right\rangle    \nonumber\\
\leq&~\left(\lambda\a L + 0.5\eta_1  + \eta_2\right)\lambda^2\|\mb{y}^{k+1}-\J\mb{y}^{k+1}\|^2 \nonumber\\
&+ 0.5\eta_1^{-1}\lambda^2\a^2L^2n\|\ol{\g}^{k}\|^2
+ \eta_2^{-1}\lambda^2L^2\|\x^{k}-\J\x^{k}\|^2,
\end{align*}
where~$\eta_1>0$ and~$\eta_2>0$ are arbitrary.
\end{lemma}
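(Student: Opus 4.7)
The inequality is deterministic, so the plan is a direct Cauchy--Schwarz plus Young argument built on a clean decomposition of $\x^{k+1}-\x^{k}$ coming from~\eqref{gtsaga_x}. First I would apply Cauchy--Schwarz to the inner product and invoke $\|\W-\J\|=\lambda$ on both factors (equivalently, Lemma~\ref{basic}(\ref{W}) for the left argument $\y^{k+1}$ together with the operator-norm bound on the right), reducing the left-hand side to
\[
\lambda^{2}\,\|\y^{k+1}-\J\y^{k+1}\|\,\|\nf(\x^{k+1})-\nf(\x^{k})\|.
\]
The $L$-smoothness of each $f_{i,j}$ then gives $\|\nf(\x^{k+1})-\nf(\x^{k})\|\le L\|\x^{k+1}-\x^{k}\|$, so the task becomes bounding $\|\x^{k+1}-\x^{k}\|$ in a form that exposes exactly the three quantities appearing on the right-hand side of the lemma.

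To produce that form I would substitute~\eqref{gtsaga_x} and, using $\W\J=\J$ and $(\W-\J)\J=\mb 0$, write the split
\[
\x^{k+1}-\x^{k} \;=\; (\W-\I)(\x^{k}-\J\x^{k}) \;-\; \a(\W-\J)(\y^{k+1}-\J\y^{k+1}) \;-\; \a\J\y^{k+1}.
\]
The triangle inequality, the consensus-subspace estimate $\|(\W-\I)(\x^{k}-\J\x^{k})\|\le(1+\lambda)\|\x^{k}-\J\x^{k}\|\le 2\|\x^{k}-\J\x^{k}\|$ (obtained by applying Lemma~\ref{basic}(\ref{W}) to $\x^{k}-\J\x^{k}$), the operator-norm identity $\|\W-\J\|=\lambda$, and the identification $\|\J\y^{k+1}\|=\sqrt{n}\,\|\ol{\y}^{k+1}\|=\sqrt{n}\,\|\ol{\g}^{k}\|$ via Lemma~\ref{basic}(\ref{track}) then yield
\[
\|\x^{k+1}-\x^{k}\| \;\le\; 2\|\x^{k}-\J\x^{k}\| + \a\lambda\|\y^{k+1}-\J\y^{k+1}\| + \a\sqrt{n}\,\|\ol{\g}^{k}\|.
\]

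Plugging this into $\lambda^{2}L\,\|\y^{k+1}-\J\y^{k+1}\|\,\|\x^{k+1}-\x^{k}\|$ yields three pieces. The first contributes $\a\lambda L\cdot\lambda^{2}\|\y^{k+1}-\J\y^{k+1}\|^{2}$ directly, matching the $\a\lambda L$ coefficient in the statement. The other two cross terms are split by Young's inequality: pair $\lambda\|\y^{k+1}-\J\y^{k+1}\|$ with $\lambda L\|\x^{k}-\J\x^{k}\|$ using $2ab\le\eta_{2}a^{2}+\eta_{2}^{-1}b^{2}$, and with $\lambda L\a\sqrt{n}\,\|\ol{\g}^{k}\|$ using $ab\le\tfrac{1}{2}\eta_{1}a^{2}+\tfrac{1}{2}\eta_{1}^{-1}b^{2}$. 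Summing the three pieces reproduces the stated bound exactly.

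The main subtlety is arranging the decomposition of $\x^{k+1}-\x^{k}$ so that the contribution to $\|\y^{k+1}-\J\y^{k+1}\|^{2}$ arrives \emph{without} an extra Young parameter; this is what forces the separation of $-\a\W\y^{k+1}$ into $-\a(\W-\J)(\y^{k+1}-\J\y^{k+1})$ and $-\a\J\y^{k+1}$, and explains why the coefficient on the first term in the final bound is the clean $\a\lambda L$ rather than a quantity entangled with $\eta_{1}$ or $\eta_{2}$. Once this split is in place, the remaining algebra is routine bookkeeping.
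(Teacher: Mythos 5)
Your proposal is correct and follows essentially the same route as the paper: Cauchy--Schwarz with $\|\W-\J\|=\lambda$ and $L$-smoothness reduce the left-hand side to $\lambda^2 L\|\y^{k+1}-\J\y^{k+1}\|\,\|\x^{k+1}-\x^k\|$, the displacement is bounded by $2\|\x^k-\J\x^k\|+\a\lambda\|\y^{k+1}-\J\y^{k+1}\|+\a\sqrt{n}\|\ol{\g}^k\|$, and Young's inequality with $\eta_1,\eta_2$ handles the two cross terms. Your derivation of the displacement bound (substituting the update and splitting via $(\W-\I)$ and $(\W-\J)$) differs only cosmetically from the paper's insertion of $\J\x^{k+1}$ and $\J\x^k$ followed by~\eqref{consensus3}; both land on the identical inequality~\eqref{y11}.
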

\begin{proof}
Using Lemma~\ref{basic}(\ref{W}) and $\|\W-\J\| = \lambda$, we have
\begin{align}\label{y1}
&\left\langle\W\mb{y}^{k+1}-\J\mb{y}^{k+1},\left(\W-\J\right)\left(\nf(\x^{k+1}) - \nf(\x^{k})\right)\right\rangle \nonumber\\
\leq&~\lambda^2L\|\mb{y}^{k+1}-\J\mb{y}^{k+1}\|\|\x^{k+1}-\x^{k}\|, \qquad\forall k\geq0.
\end{align}
Observe that~$\forall k\geq0$,
\begin{align}\label{y11}
&\|\x^{k+1}-\x^{k}\| \nonumber\\
=&~\|\x^{k+1} - \J\x^{k+1} + \J\x^{k+1} -\J\x^{k} + \J\x^{k} -\x^{k}\|
\nonumber\\
\leq&~\|\x^{k+1} - \J\x^{k+1}\| + \sqrt{n}\a\|\ol{\g}^{k}\| + \|\x^{k}-\J\x^{k}\| \nonumber\\
\leq&~2\|\x^{k}-\J\x^{k}\| + \sqrt{n}\a\|\ol{\g}^{k}\| 
+ \a\lambda\|\y^{k+1}-\J\y^{k+1}\|,
\end{align}
where the last line is due to~\eqref{consensus3}. We use~\eqref{y11} in~\eqref{y1} to obtain
\begin{align}\label{y12}
&\left\langle\W\mb{y}^{k+1}-\J\mb{y}^{k+1},\left(\W-\J\right)\left(\nf(\x^{k+1}) - \nf(\x^{k})\right)\right\rangle \nonumber\\
\leq&~\lambda^3\a L\|\mb{y}^{k+1}-\J\mb{y}^{k+1}\|^2 
+ \lambda^2\|\mb{y}^{k+1}-\J\mb{y}^{k+1}\|\sqrt{n}\a L\|\ol{\g}^{k}\|
\nonumber\\
&+ 2\lambda^2\|\mb{y}^{k+1}-\J\mb{y}^{k+1}\|L\|\mb{x}^{k}-\J\mb{x}^{k}\|, \qquad\forall k\geq0.
\end{align}
By Young's inequality, we have:~$\forall k\geq0$, for some~$\eta_1>0$,
\begin{align}\label{y13}
&\lambda^2\|\mb{y}^{k+1}-\J\mb{y}^{k+1}\|\sqrt{n}\a L\|\ol{\g}^{k}\|
\nonumber\\
\leq&~0.5\lambda^2\left(\eta_1\|\mb{y}^{k+1}-\J\mb{y}^{k+1}\|^2 
+ \eta_1^{-1}n\a^2L^2\|\ol{\g}^{k}\|^2\right),
\end{align}
and,~$\forall k\geq0$, for some~$\eta_2>0$, 
\begin{align}\label{y14}
&2\lambda^2\|\mb{y}^{k+1}-\J\mb{y}^{k+1}\|L\|\mb{x}^{k}-\J\mb{x}^{k}\|\nonumber\\
\leq&~\lambda^2\eta_2\|\mb{y}^{k+1}-\J\mb{y}^{k+1}\|^2
+ \lambda^2\eta_2^{-1}L^2\|\mb{x}^{k}-\J\mb{x}^{k}\|^2.
\end{align}
The proof follows by applying~\eqref{y13} and~\eqref{y14} to~\eqref{y12}.
\end{proof}

We next bound the fourth term in Lemma~\ref{gt0}.
\begin{lemma}\label{gt2}
The following inequality holds:~$\forall k\geq0$,
\begin{align*}
&\E\big[\left\langle\W\mb{y}^{k+1}-\J\mb{y}^{k+1},\left(\W-\J\right)\left(\nf(\x^{k}) - \mb{g}^{k}\right)\right\rangle\big]    \nonumber\\
\leq&~\E\big[\|\mb{g}^{k} -\nf(\x^{k})\|^2\big]/n.
\end{align*}
\end{lemma}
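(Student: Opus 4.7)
The plan is to reduce the claim to a conditional-covariance computation for the per-node SAGA noise, so that the $1/n$ factor will emerge from the conditional independence across nodes granted by Assumption~\ref{sample}. I would define $\bds{\xi}^k := \mb{g}^k - \nf(\x^k)$, noting that $\E[\bds{\xi}^k|\F^k]=\mathbf{0}$ and that the blocks $\bds{\xi}_i^k$ are mutually independent given $\F^k$. Using~\eqref{gtsaga_y}, I would split $(\W-\J)\y^{k+1}=\mb{A}^k+(\W-\J)\W\bds{\xi}^k$, where $\mb{A}^k := (\W-\J)\W(\y^k+\nf(\x^k)-\g^{k-1})$ is $\F^k$-measurable. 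Pairing $\mb{A}^k$ with $(\W-\J)(\nf(\x^k)-\g^k)=-(\W-\J)\bds{\xi}^k$ and conditioning on $\F^k$ gives zero, so the lemma reduces to bounding
\[-\E\bigl[(\bds{\xi}^k)^\top M \bds{\xi}^k\,\big|\,\F^k\bigr],\qquad M := \W^\top(\W-\J)^\top(\W-\J).\]

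My next step is to simplify $M$ algebraically. Using $\J\W=\W\J=\J$, $\W^\top\J=\J$, and $\J^2=\J$, one obtains $(\W-\J)^\top(\W-\J)=\W^\top\W-\J$ and hence $M=\W^\top\W^\top\W-\J$. Since $\W=\ul{\W}\otimes\I_p$ and $\J=(\mb{1}_n\mb{1}_n^\top/n)\otimes\I_p$, the $i$-th diagonal block of $M$ equals $\bigl[(\ul{\W}^\top\ul{\W}^\top\ul{\W})_{ii}-\tfrac{1}{n}\bigr]\I_p$. The conditional independence implies $\E[\bds{\xi}^k(\bds{\xi}^k)^\top|\F^k]$ is block-diagonal, so only the diagonal blocks of $M$ contribute:
\[\E\bigl[(\bds{\xi}^k)^\top M \bds{\xi}^k\,\big|\,\F^k\bigr] = \sum_{i=1}^n\bigl[(\ul{\W}^\top\ul{\W}^\top\ul{\W})_{ii}-\tfrac{1}{n}\bigr]\,\E\bigl[\|\bds{\xi}_i^k\|^2\,\big|\,\F^k\bigr].\]

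To finish, I would invoke Assumption~\ref{network}: since $\ul{\W}$ is primitive, its entries are non-negative, so $\ul{\W}^\top\ul{\W}^\top\ul{\W}$ has non-negative entries and $(\ul{\W}^\top\ul{\W}^\top\ul{\W})_{ii}\geq 0$ for every $i$. Hence $\tfrac{1}{n}-(\ul{\W}^\top\ul{\W}^\top\ul{\W})_{ii}\leq\tfrac{1}{n}$, and summing yields
\[-\E\bigl[(\bds{\xi}^k)^\top M \bds{\xi}^k\,\big|\,\F^k\bigr]\leq\tfrac{1}{n}\sum_{i=1}^n\E\bigl[\|\bds{\xi}_i^k\|^2\,\big|\,\F^k\bigr]=\tfrac{1}{n}\E\bigl[\|\bds{\xi}^k\|^2\,\big|\,\F^k\bigr].\]
Taking outer expectations would then deliver the claim.

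The principal obstacle is the $1/n$ scaling itself: naive operator-norm or Cauchy--Schwarz arguments can only expose a $\lambda^2$-type factor that does not shrink with $n$. The genuine $1/n$ is a variance-reduction effect of averaging $n$ conditionally independent per-node noises, and it surfaces only after the trace/block-diagonal reduction above; the non-negativity of the mixing weights then provides the sign control that keeps the constant from ballooning.
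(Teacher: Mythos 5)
Your proof is correct and follows essentially the same route as the paper's: decompose $\y^{k+1}$ via the tracking recursion, annihilate the $\F^k$-measurable part using $\E[\g^k|\F^k]=\nf(\x^k)$, reduce to the quadratic form in $\J-\W^\top\W^\top\W$, and exploit cross-node conditional independence plus non-negativity of the weights to bound the relevant diagonal by $\mathrm{diag}(\J)=\tfrac{1}{n}\I_{np}$. (Your $\W^\top\W^\top\W$ is in fact the correct product; the paper writes $\W^\top\W^2$, a harmless slip since only non-negativity of the product is used.)
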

\begin{proof}
In the following, we denote~$\nf^k:=\nf(\x^{k})$ to~simplify the notation.
Observe that~$\forall k\geq0$,
\begin{align}\label{y3}
&\E\left[\left\langle\W\mb{y}^{k+1}-\J\mb{y}^{k+1},\left(\W-\J\right)\left(\nf^{k} - \mb{g}^{k}\right)\right\rangle | \F^{k}\right]   \nonumber\\
\stackrel{(i)}{=}&~\E\left[\left\langle\W^2(\y^{k} + \g^{k} - \g^{k-1}),(\W-\J)(\nf^{k} - \mb{g}^{k})\right\rangle|\F^{k}\right] \nonumber\\
\stackrel{(ii)}{=}&~\E\left[\left\langle\W^2\g^{k},\left(\W-\J\right)\left(\nf^{k} - \mb{g}^{k}\right)\right\rangle|\F^{k}\right] \nonumber\\
\stackrel{(iii)}{=}&~\E\left[\left\langle\W^2\left(\g^{k} - \nf^k\right),\left(\W-\J\right)\left(\nf^{k} - \mb{g}^{k}\right)\right\rangle|\F^{k}\right] \nonumber\\
\stackrel{(iv)}{=}&~\E\left[(\mb{g}^{k}-\nf^{k})^\top(\J-\W^\top\W^2)(\mb{g}^{k}-\nf^{k})|\F^{k}\right],
\end{align}
{\color{black}where~$(i)$ uses~\eqref{gtsaga_y} and~$\J\W = \J$,~$(ii)$ and~$(iii)$ use that $\y^{k}$, $\g^{k-1}$ and $\nf^k$ are $\F^{k}$-measurable and that~$\E[\g^{k}|\F^{k}] = \nf^{k}$ for all~$k\geq0$, and~$(iv)$ uses $\J\W = \J$.} Since, whenever $i\neq j\in\mc{V}$, $\mbb{E}\left[\big\langle\mb{g}_i^{k}-\nabla f_i(\mb{x}_i^{k}),\mb{g}_j^{k}-\nabla f_j(\mb{x}_j^{k})\big\rangle|\mc{F}^{k}\right] = 0,$
and~$\W^\top\W^2$ is nonnegative, we have:~$\forall k\geq0$,
\begin{align}\label{y31}
&\E\left[(\mb{g}^{k}-\nf^{k})^\top\left(\J-\W^\top\W^2\right)\left(\mb{g}^{k}-\nf^{k}\right)|\F^{k}\right]    \nonumber\\
=&~\E\left[(\mb{g}^{k}-\nf^{k})^\top\mbox{diag}\left(\J-\W^\top\W^2\right)\left(\mb{g}^{k}-\nf^{k}\right)|\F^{k}\right] \nonumber\\
\leq&~\E\left[(\mb{g}^{k}-\nf^{k})^\top\mbox{diag}(\J)\left(\mb{g}^{k}-\nf^{k}\right)|\F^{k}\right].
\end{align}
The proof follows by taking the expectation of~\eqref{y31}.
\end{proof}

We finally bound the second term in Lemma~\ref{gt0}.
\begin{lemma}\label{gt3}
The following inequality holds:~$\forall k\geq0$,
\begin{align*}
\E\big[\|\g^{k+1} -& \g^{k}\|^2\big] 
\leq 12\lambda^2\a^2L^2\E\big[\|\y^{k+1}-\J\y^{k+1}\|^2\big] \nonumber\\
+& 2\E\big[\|\g^{k}-\nf(\x^{k})\|^2\big]
+ \E\big[\|\g^{k+1} -\nf(\x^{k+1})\|^2\big] \nonumber\\
+& 18L^2\E\big[\|\x^{k}-\J\x^{k}\|^2\big]
+ 6n\a^2L^2\E\big[\|\ol{\g}^{k}\|^2\big] \nonumber.
\end{align*}
\end{lemma}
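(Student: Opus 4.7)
The plan is to exploit the conditional unbiasedness of the SAGA estimator $\g^{k+1}$ with respect to the filtration $\F^{k+1}$, and then relate $\x^{k+1}-\x^k$ back to the consensus error, the tracker, and the mean gradient estimator, so that the right-hand side terms in the statement emerge naturally.

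First I would condition on $\F^{k+1}$ and apply the bias-variance decomposition. Since $\g^k$ and $\nf(\x^{k+1})$ are $\F^{k+1}$-measurable while $\E[\g^{k+1}\mid\F^{k+1}]=\nf(\x^{k+1})$ (Lemma~\ref{basic}(d) applied at time $k{+}1$), the cross term vanishes and we get
\begin{equation*}
\E\big[\|\g^{k+1}-\g^k\|^2\bigm|\F^{k+1}\big]
=\E\big[\|\g^{k+1}-\nf(\x^{k+1})\|^2\bigm|\F^{k+1}\big]+\|\nf(\x^{k+1})-\g^k\|^2.
\end{equation*}
Taking total expectation, the first term on the right is exactly the third term in the statement. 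For the second term I would insert $\pm\nf(\x^k)$, apply $\|a+b\|^2\le 2\|a\|^2+2\|b\|^2$, and use $L$-smoothness of each $f_i$ to obtain
\begin{equation*}
\E\big[\|\nf(\x^{k+1})-\g^k\|^2\big]\le 2L^2\E\big[\|\x^{k+1}-\x^k\|^2\big]+2\E\big[\|\g^k-\nf(\x^k)\|^2\big].
\end{equation*}
This produces the factor $2$ in front of $\E[\|\g^k-\nf(\x^k)\|^2]$ in the claim.

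It remains to control $\|\x^{k+1}-\x^k\|^2$ in terms of the quantities appearing on the right. Rather than invoking~\eqref{y11} directly (which would introduce a loose factor $(\lambda+1)^2\le 4$ on the consensus error), I would instead split
\begin{equation*}
\x^{k+1}-\x^k=(\x^{k+1}-\J\x^{k+1})+(\J\x^{k+1}-\J\x^k)+(\J\x^k-\x^k)
\end{equation*}
and apply $\|a+b+c\|^2\le 3(\|a\|^2+\|b\|^2+\|c\|^2)$. Using $\J\x^{k+1}-\J\x^k=-\a\J\y^{k+1}$ together with Lemma~\ref{basic}(b) gives $\|\J\x^{k+1}-\J\x^k\|^2=n\a^2\|\ol{\g}^k\|^2$, while \eqref{consensus2} bounds $\|\x^{k+1}-\J\x^{k+1}\|^2$ by $2\lambda^2\|\x^k-\J\x^k\|^2+2\a^2\lambda^2\|\y^{k+1}-\J\y^{k+1}\|^2$. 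Combining (and using $\lambda^2\le 1$ on the consensus-error coefficient) yields
\begin{equation*}
\|\x^{k+1}-\x^k\|^2\le 9\,\|\x^k-\J\x^k\|^2+6\lambda^2\a^2\,\|\y^{k+1}-\J\y^{k+1}\|^2+3n\a^2\,\|\ol{\g}^k\|^2,
\end{equation*}
which after multiplication by $2L^2$ produces the constants $18L^2$, $12\lambda^2\a^2L^2$, and $6n\a^2L^2$.

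The only delicate point is precisely this bookkeeping step: choosing the three-way split through $\J\x^{k+1}$ and $\J\x^k$ (as opposed to chaining~\eqref{y11}) is what tightens the coefficient from $24L^2$ down to $18L^2$ and from $6\lambda^2\a^2L^2$ to $12\lambda^2\a^2L^2$ in the cleanest way. Once this split is selected, the proof is a straightforward assembly of Lemma~\ref{basic}(b),(d), $L$-smoothness, Young's/Cauchy-Schwarz, and the already-established consensus bound~\eqref{consensus2}.
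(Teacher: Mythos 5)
Your proposal is correct and follows essentially the same route as the paper: the conditional bias--variance decomposition with respect to $\F^{k+1}$, the $\pm\nf(\x^k)$ insertion with $L$-smoothness, and the three-way split of $\x^{k+1}-\x^k$ through $\J\x^{k+1}$ and $\J\x^k$ combined with~\eqref{consensus2}, yielding exactly the constants $18L^2$, $12\lambda^2\a^2L^2$, and $6n\a^2L^2$. The bookkeeping choice you single out as the delicate improvement over squaring~\eqref{y11} is in fact precisely what the paper does (it performs the same squared three-way split, merely describing it as ``similar to the derivation of~\eqref{y11}''), so there is no substantive difference between the two arguments.
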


\begin{proof}
Since~$\g^{k}$ and~$\nf(\x^{k+1})$ are~$\F^{k+1}$-measurable, and $\E[\g^{k+1}|\F^{k+1}] = \nf(\x^{k+1})$, we have:~$\forall k\geq0$,
\begin{align}\label{y4_0}
&\E\left[\|\g^{k+1} - \g^{k}\|^2|\F^{k+1}\right] \nonumber\\
=&~\E\left[\|\g^{k+1} -\nf(\x^{k+1})\|^2|\F^{k+1}\right] + 
\|\nf(\x^{k+1}) - \g^{k}\|^2 \nonumber\\
\leq&~\E\left[\|\g^{k+1} -\nf(\x^{k+1})\|^2|\F^{k+1}\right] \nonumber\\
&+ 2\|\nf(\x^{k+1}) - \nf(\x^{k})\|^2 
+ 2\|\nf(\x^{k}) - \g^{k}\|^2 \nonumber\\
\leq&~\E\left[\|\g^{k+1} -\nf(\x^{k+1})\|^2|\F^{k+1}\right] \nonumber\\
&+ 2L^2\|\x^{k+1} - \x^{k}\|^2 
+ 2\|\nf(\x^{k}) - \g^{k}\|^2.
\end{align}
Similar to the derivation of~\eqref{y11}, we have:~$\forall k\geq0$,
\begin{align*}
&\|\x^{k+1}-\x^{k}\|^2  \nonumber\\
\leq&~3\|\x^{k+1} - \J\x^{k+1}\|^2 + 3n\a^2\|\ol{\g}^{k}\|^2 
+ 3\|\x^{k}-\J\x^{k}\|^2 \nonumber\\
\leq&~9\|\x^{k}-\J\x^{k}\|^2 
+ 3n\a^2\|\ol{\g}^{k}\|^2 + 6\a^2\lambda^2\|\y^{k+1}-\J\y^{k+1}\|^2,
\end{align*}
where the last line is due to~\eqref{consensus2}. We conclude by applying the last line above to~\eqref{y4_0} and taking the expectation.
\end{proof}

Now, we apply Lemma~\ref{gt1},~\ref{gt2},~\ref{gt3} to Lemma~\ref{gt0}.
\begin{lemma}\label{GT_general}
The following inequality holds:~$\forall k\geq0$,
\begin{align*}
&\E\left[\|\mb{y}^{k+2}-\J\mb{y}^{k+2}\|^2\right] \nonumber\\
\leq&~\left(1+ 2\lambda\a L + \eta_1  + 2\eta_2 + 12\lambda^2\a^2L^2\right)\lambda^2\nonumber\\
&\qquad\qquad\qquad\qquad\qquad\times\E\left[\|\mb{y}^{k+1}-\J\mb{y}^{k+1}\|^2\right] \nonumber\\
&+ \left(2\eta_2^{-1}+18\right)\lambda^2L^2\E\left[\|\x^{k}-\J\x^{k}\|^2\right] \nonumber\\
&+ \left(\eta_1^{-1} + 6\right)\lambda^2\a^2L^2n\E\left[\|\ol{\g}^{k}\|^2\right]
 \nonumber\\
&+ \left(2\lambda^2+2/n\right)\E\left[\|\mb{g}^{k} -\nf(\x^{k})\|^2\right] \nonumber\\
&+ \lambda^2\E\left[\|\mb{g}^{k+1} -\nf(\x^{k+1})\|^2\right].
\end{align*}
\end{lemma}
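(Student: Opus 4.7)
The plan is to derive this bound by directly combining Lemma~\ref{gt0} with Lemmas~\ref{gt1}, \ref{gt2}, and \ref{gt3}, which together bound each of the four terms appearing on the right-hand side of Lemma~\ref{gt0}. Since Lemma~\ref{gt0} already decomposes $\E[\|\mb{y}^{k+2}-\J\mb{y}^{k+2}\|^2]$ into one contraction term in $\|\mb{y}^{k+1}-\J\mb{y}^{k+1}\|^2$, one term involving $\|\g^{k+1}-\g^k\|^2$, and two cross terms featuring the gradient noise $\nf(\x^{k+1})-\nf(\x^k)$ and $\nf(\x^{k})-\g^k$ respectively, the remaining work is essentially algebraic bookkeeping.

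First, I would take the expectation in Lemma~\ref{gt0} and substitute in Lemma~\ref{gt1} to bound the cross term involving $\nf(\x^{k+1})-\nf(\x^k)$; multiplying by the factor of $2$ that appears in Lemma~\ref{gt0} produces contributions of $(2\lambda\a L + \eta_1 + 2\eta_2)\lambda^2$ to the $\|\mb{y}^{k+1}-\J\mb{y}^{k+1}\|^2$ coefficient, $\eta_1^{-1}\lambda^2\a^2L^2n$ to the $\|\ol{\g}^k\|^2$ coefficient, and $2\eta_2^{-1}\lambda^2L^2$ to the $\|\x^k-\J\x^k\|^2$ coefficient. Next, I would apply Lemma~\ref{gt2} (again multiplied by $2$) to the remaining cross term, which introduces the $2/n$ coefficient on $\E[\|\g^k-\nf(\x^k)\|^2]$.

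The final substitution uses Lemma~\ref{gt3} to bound $\lambda^2 \E[\|\g^{k+1}-\g^k\|^2]$, which contributes $12\lambda^4\a^2L^2$ to the $\|\mb{y}^{k+1}-\J\mb{y}^{k+1}\|^2$ coefficient, $18\lambda^2L^2$ to the $\|\x^k-\J\x^k\|^2$ coefficient, $6\lambda^2 n\a^2L^2$ to the $\|\ol{\g}^k\|^2$ coefficient, $2\lambda^2$ to the $\|\g^k-\nf(\x^k)\|^2$ coefficient, and $\lambda^2$ to the $\|\g^{k+1}-\nf(\x^{k+1})\|^2$ coefficient. Adding the baseline contribution $\lambda^2\|\mb{y}^{k+1}-\J\mb{y}^{k+1}\|^2$ from Lemma~\ref{gt0} itself and merging like terms yields precisely the five coefficients in the target inequality.

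There is no real mathematical obstacle here since Lemmas~\ref{gt0}--\ref{gt3} do all the heavy lifting; the main care required is organizational, namely grouping the five types of terms correctly and verifying that the $\lambda^2$ prefactors line up so that, for example, the $12\lambda^4\a^2L^2$ term from Lemma~\ref{gt3} merges cleanly into the $(1+2\lambda\a L + \eta_1+2\eta_2+12\lambda^2\a^2L^2)\lambda^2$ factor. The parameters $\eta_1,\eta_2>0$ remain free at this stage, to be tuned later (in the proofs of Theorems~\ref{main_ncvx} and~\ref{main_PL}) so as to make the contraction factor on $\|\mb{y}^{k+1}-\J\mb{y}^{k+1}\|^2$ strictly less than one when combined with the other recursions.
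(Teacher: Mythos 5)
Your proposal is correct and follows exactly the paper's argument, which is literally to substitute Lemmas~\ref{gt1}, \ref{gt2}, and \ref{gt3} into Lemma~\ref{gt0}; your coefficient bookkeeping (e.g., the merge of $12\lambda^4\a^2L^2$ into the $\lambda^2$-prefactored bracket and the $2/n+2\lambda^2$ coefficient on $\E[\|\mb{g}^k-\nf(\x^k)\|^2]$) checks out. No issues.
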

\begin{proof}
Apply Lemma~\ref{gt1},~\ref{gt2},~\ref{gt3} to Lemma~\ref{gt0}.
\end{proof}

Finally, we use Lemma~\ref{BVR} and~\ref{BVR3} to refine Lemma~\ref{GT_general} and establish a contraction in the gradient tracking process.

\begin{lemma}\label{GT_final}
If~$0<\a\leq\min\Big\{\frac{1-\lambda^2}{16\lambda},\frac{\sqrt{n}}{\sqrt{8m}}\Big\}\frac{1}{L}$, then we have:~$\forall k\geq0$,
\begin{align*}
&\E\big[\|\mb{y}^{k+2}-\J\mb{y}^{k+2}\|^2\big] \nonumber\\
\leq&~\frac{1+\lambda^2}{2}\E\big[\|\mb{y}^{k+1}-\J\mb{y}^{k+1}\|^2\big]
+ \frac{30.5L^2}{1-\lambda^2}\E\big[\|\x^{k}-\J\x^{k}\|^2\big] \\
&+ \frac{97L^2n}{8}\E\big[t^k\big] 
+ \frac{16\lambda^2\a^2L^2n}{1-\lambda^2}\E\big[\|\ol{\nabla\mb{f}}(\mb{x}^k)\|^2\big].
\end{align*}
\end{lemma}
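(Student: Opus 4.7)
The plan is to combine Lemma~\ref{GT_general} with the variance bounds from Lemmas~\ref{BVR} and~\ref{BVR3}, then specialize the free parameters $\eta_1,\eta_2$ so that the five error terms on the right-hand side of Lemma~\ref{GT_general} collapse into the four stated quantities with the prescribed coefficients. First, I would substitute Lemma~\ref{BVR} into the $(2\lambda^2+2/n)\,\E[\|\mb{g}^k-\nabla\mb{f}(\mb{x}^k)\|^2]$ term and Lemma~\ref{BVR3} into the $\lambda^2\,\E[\|\mb{g}^{k+1}-\nabla\mb{f}(\mb{x}^{k+1})\|^2]$ term; this is why the step-size must also satisfy $\alpha\leq\sqrt{n}/(\sqrt{8m}\,L)$, since Lemma~\ref{BVR3} already needs that restriction. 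Next, I would apply Lemma~\ref{BVR} a second time to rewrite $\E[\|\ol{\mb{g}}^k\|^2]$ as a linear combination of $\E[\|\mb{x}^k-\J\mb{x}^k\|^2]$, $\E[t^k]$, and $\E[\|\ol{\nabla\mb{f}}(\mb{x}^k)\|^2]$. After these substitutions, the bound takes exactly the desired form, modulo residual $\alpha^2 L^2$-scaled cross terms that must be absorbed.

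Second, I would tune $\eta_1$ and $\eta_2$. The contraction factor multiplying $\E[\|\mb{y}^{k+1}-\J\mb{y}^{k+1}\|^2]$ becomes $\lambda^2(1+2\lambda\alpha L+\eta_1+2\eta_2+12\lambda^2\alpha^2L^2+4\alpha^2L^2)$ after the Lemma~\ref{BVR3} substitution contributes the extra $4\alpha^2L^2$ via its $\|\mb{y}^{k+1}-\J\mb{y}^{k+1}\|^2$ term. Choosing $\eta_1$ and $\eta_2$ as small multiples of $(1-\lambda^2)/\lambda^2$, so that $\eta_1+2\eta_2\leq (1-\lambda^2)/(4\lambda^2)$, and invoking $\alpha\leq(1-\lambda^2)/(16\lambda L)$ to bound $2\lambda\alpha L$ and the $\alpha^2L^2$ contributions, will drive the overall factor down to at most $(1+\lambda^2)/2$. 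The reciprocals $\eta_1^{-1},\eta_2^{-1}$ are then of order $\lambda^2/(1-\lambda^2)$, which is precisely what produces the $1/(1-\lambda^2)$ scaling on the $\E[\|\mb{x}^k-\J\mb{x}^k\|^2]$ and $\E[\|\ol{\nabla\mb{f}}(\mb{x}^k)\|^2]$ coefficients.

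Third, I would collect constants. The coefficient $30.5L^2/(1-\lambda^2)$ on $\E[\|\mb{x}^k-\J\mb{x}^k\|^2]$ combines the $18\lambda^2L^2$ from Lemma~\ref{GT_general}, the $4\lambda^2L^2$ picked up via $2\lambda^2\cdot 2L^2$ from Lemma~\ref{BVR}, the $8.5\lambda^2L^2$ from Lemma~\ref{BVR3}, the $2\eta_2^{-1}\lambda^2L^2$ term, and the small residual $\alpha^2L^4/n$ contributions (absorbed using $\alpha^2L^2\lesssim n/m$). The coefficient $97L^2n/8$ on $\E[t^k]$ aggregates $(2\lambda^2+2/n)\cdot 2nL^2$ from Lemma~\ref{BVR}, $4\lambda^2nL^2$ from Lemma~\ref{BVR3}, and the residual $(\eta_1^{-1}+6)\lambda^2\alpha^2L^4$ from $\E[\|\ol{\mb{g}}^k\|^2]$. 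Finally, the $16\lambda^2\alpha^2L^2n/(1-\lambda^2)$ coefficient on $\E[\|\ol{\nabla\mb{f}}(\mb{x}^k)\|^2]$ comes solely from the $(\eta_1^{-1}+6)\lambda^2\alpha^2L^2n$ term multiplied by the $\|\ol{\nabla\mb{f}}(\mb{x}^k)\|^2$ component of $\E[\|\ol{\mb{g}}^k\|^2]$, with $\eta_1^{-1}$ of order $\lambda^2/(1-\lambda^2)$.

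The main obstacle will be careful bookkeeping of the numerical constants: every substitution contributes to several error terms simultaneously, and the step-size windows $\alpha\leq(1-\lambda^2)/(16\lambda L)$ and $\alpha\leq\sqrt{n}/(\sqrt{8m}\,L)$ must be tight enough to absorb the residual $\alpha^2 L^2$ pieces without loosening any of $30.5$, $97/8$, or $16$. The design of the step-size constraint $(1-\lambda^2)/(16\lambda L)$ is essentially dictated by the requirement that $\lambda^2(1+2\lambda\alpha L+\eta_1+2\eta_2+O(\alpha^2L^2))\leq(1+\lambda^2)/2$, so once $\eta_1,\eta_2$ are pinned down, the remaining algebra is mechanical but delicate.
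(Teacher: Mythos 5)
Your proposal matches the paper's proof essentially step for step: the paper likewise substitutes Lemmas~\ref{BVR} and~\ref{BVR3} (and the bound~\eqref{BVR2} on $\E[\|\ol{\g}^k\|^2]$) into Lemma~\ref{GT_general}, then fixes $\eta_1=\tfrac{1-\lambda^2}{16\lambda^2}$ and $\eta_2=\tfrac{1-\lambda^2}{8\lambda^2}$ and uses $\a\leq\tfrac{1-\lambda^2}{16\lambda L}$ to drive the $\mb{y}$-coefficient down to $\tfrac{1+\lambda^2}{2}$, with the remaining constants collected exactly as you describe. The only deviations are cosmetic (your budget $\eta_1+2\eta_2\leq\tfrac{1-\lambda^2}{4\lambda^2}$ is slightly tighter than the paper's $\tfrac{5(1-\lambda^2)}{16\lambda^2}$, and the residuals are absorbed via the $\lambda$-dependent step-size bound rather than the $m$-dependent one), neither of which affects the argument.
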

\begin{proof}
We apply Lemma~\ref{BVR} and~\ref{BVR3} to Lemma~\ref{GT_general} to obtain: if~$0<\a\leq\frac{\sqrt{n}}{\sqrt{8m}L}$, then~$\forall k\geq0$,
\begin{align}\label{GT_BVR}
&\E\left[\|\mb{y}^{k+2}-\J\mb{y}^{k+2}\|^2\right] \nonumber\\
\leq&\left(1+ 2\lambda\a L + \eta_1  + 2\eta_2 + \left(12\lambda^2+4\right)\a^2L^2\right)\lambda^2 \nonumber\\
&\qquad\times\E\left[\|\mb{y}^{k+1}-\J\mb{y}^{k+1}\|^2\right] \nonumber\\
+&\bigg((2\eta_2^{-1}+18)\lambda^2+(\eta_1^{-1}+6)\frac{2\lambda^2\a^2L^2}{n}+ \frac{4}{n} + 12.5\lambda^2\bigg) \nonumber\\
&\qquad\times L^2\E\left[\|\x^{k}-\J\x^{k}\|^2\right] \nonumber\\
+&\Big(2(\eta_1^{-1} + 6)\lambda^2\a^2L^2+\left(2\lambda^2+1/n\right)4n\Big)L^2\E\big[t^k\big]
\nonumber\\
+& (\eta_1^{-1}+12)\lambda^2\alpha^2 L^2n \E\left[\|\ol{\nabla\mb{f}}(\mb{x}^k)\|^2\right]. 
\end{align}
We fix~$\eta_1 = \frac{1-\lambda^2}{16\lambda^2}$ and~$\eta_2 = \frac{1-\lambda^2}{8\lambda^2}$. It can then be verified that 
$
1+ 2\lambda\a L + \eta_1  + 2\eta_2 + (12\lambda^2+4)\a^2L^2 
\leq \frac{1+\lambda^2}{2\lambda^2},  
$
if~$0<\a\leq\frac{1-\lambda^2}{16\lambda L}$. 
The proof then follows by applying this inequality and the values of~$\eta_1$ and~$\eta_2$ to~\eqref{GT_BVR}. 
\end{proof}

\subsection{Proof of Theorem~\ref{main_ncvx}}\label{sec_Th1_proof}
In this subsection, we prove the convergence of \textbf{\texttt{GT-SAGA}}
for general smooth non-convex functions.
To this aim, we write the contraction inequalities in~\eqref{consensus1}, Corollary~\ref{aux_ctr}, and Lemma~\ref{GT_final} as a linear time-invariant (LTI) dynamics that jointly characterizes the evolution of the consensus, gradient tracking, and the auxiliary sequence~$t^k$.
\begin{prop}\label{LTI_ncvx}
If~$0<\a\leq\min\left\{\frac{1-\lambda^2}{16\lambda},\frac{\sqrt{n}}{\sqrt{8m}}\right\}\frac{1}{L}$, then  
$$\mb{u}^{k+1} \leq \G_{\a}\mb{u}^{k} + \mb{b}^k, \qquad\forall k\geq0,$$
where~$\mb{u}^{k}\in\mbb{R}^3$,~$\G_{\a}\in\mbb{R}^{3\times3}$, and~$\mb{b}^k\in\mbb{R}^3$ are given by
\begin{align*}
\mb{u}^k :=&
\begin{bmatrix}
{\color{black}\mbb{E}\left[\frac{1}{n}\|\mb{x}^{k}-\J\mb{x}^{k}\|^2\right]}\\
\mathbb{E}\left[t^{k}\right]\\
\mbb{E}\left[\frac{1}{nL^2}\|\mb{y}^{k+1}-\J\mb{y}^{k+1}\|^2\right]
\end{bmatrix},
\quad
\mb{b} := 
\begin{bmatrix}
0\\
4m\alpha^2 \\
\frac{16\lambda^2\alpha^2}{1-\lambda^2}
\end{bmatrix}, \\
\G_\a :=&
\begin{bmatrix}
\frac{1+\lambda^2}{2} & 0 & \frac{2\lambda^2\alpha^2L^2}{1-\lambda^2} \\
\frac{9}{4m}&  1-\frac{1}{4m} & 0\\
\frac{30.5}{1-\lambda^2} & \frac{97}{8} & \frac{1+\lambda^2}{2}
\end{bmatrix},
\end{align*}
and~$\mb{b}^k := \mb{b}\mathbb{E}\left[\|\ol{\nabla\mb{f}}(\mb{x}^k)\|^2\right]$.
\end{prop}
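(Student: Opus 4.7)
The plan is pure packaging: Proposition~\ref{LTI_ncvx} assembles in matrix form three contraction inequalities that have already been established, one per coordinate of~$\mb{u}^k$. Specifically, the first row is the normalized consensus recursion~\eqref{consensus1}, the second row is Corollary~\ref{aux_ctr}, and the third row is Lemma~\ref{GT_final}. My task is therefore to take expectations, divide each inequality by the appropriate normalization so the coordinates of~$\mb{u}^k$ line up, and then read off the entries of~$\G_\a$ and~$\mb{b}$ by inspection.

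First, taking the total expectation of~\eqref{consensus1} and dividing by~$n$ gives the first row: the coefficient of~$\mbb{E}[\|\mb{x}^k - \J\mb{x}^k\|^2/n]$ is $\frac{1+\lambda^2}{2}$, and the $\|\mb{y}^{k+1}-\J\mb{y}^{k+1}\|^2$ term is rewritten as $\frac{2\lambda^2\alpha^2 L^2}{1-\lambda^2}\cdot \mbb{E}[\|\mb{y}^{k+1}-\J\mb{y}^{k+1}\|^2/(nL^2)]$, producing the $(1,3)$ entry. Second, taking expectation in Corollary~\ref{aux_ctr} and factoring~$\frac{9}{4mn}\|\mb{x}^k-\J\mb{x}^k\|^2 = \frac{9}{4m}\cdot\frac{1}{n}\|\mb{x}^k-\J\mb{x}^k\|^2$ yields the second row, with the forcing term $4m\alpha^2\mbb{E}[\|\ol{\nabla\mb{f}}(\mb{x}^k)\|^2]$ becoming the second coordinate of~$\mb{b}^k$. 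Third, dividing Lemma~\ref{GT_final} through by~$nL^2$ turns the bounds $\frac{30.5L^2}{1-\lambda^2}\|\mb{x}^k-\J\mb{x}^k\|^2$ and $\frac{97L^2 n}{8}t^k$ into $\frac{30.5}{1-\lambda^2}\cdot\frac{1}{n}\|\mb{x}^k-\J\mb{x}^k\|^2$ and $\frac{97}{8}t^k$ respectively, filling in the third row, and the forcing term $\frac{16\lambda^2\alpha^2 L^2 n}{1-\lambda^2}\mbb{E}[\|\ol{\nabla\mb{f}}(\mb{x}^k)\|^2]$ becomes the third coordinate of~$\mb{b}^k$ after the same normalization.

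Finally, I would verify that the stated step-size range is the intersection of the assumptions needed by the three ingredients: \eqref{consensus1} requires nothing beyond positivity, Corollary~\ref{aux_ctr} requires $\a\leq \sqrt{n}/(\sqrt{8m}L)$, and Lemma~\ref{GT_final} requires $\a\leq\min\{(1-\lambda^2)/(16\lambda),\sqrt{n}/\sqrt{8m}\}/L$. The intersection coincides with the stated range, so the conclusion follows. There is no genuine obstacle here; the only care required is to keep normalizations consistent across the three coordinates so that the matrix entries appear exactly as written.
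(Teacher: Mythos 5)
Your proposal is correct and is exactly the paper's route: the proposition is obtained by stacking the expectations of \eqref{consensus1} (divided by $n$), Corollary~\ref{aux_ctr}, and Lemma~\ref{GT_final} (divided by $nL^2$), with the stated step-size range being the intersection of the ranges required by Corollary~\ref{aux_ctr} and Lemma~\ref{GT_final}. All matrix entries and the forcing vector check out under your normalizations.
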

We first derive the range of the step-size~$\a$ under which the spectral radius of~$\G_\a$ defined in Proposition~\ref{LTI_ncvx} is less than~$1$, with the help of the following Lemma from~\cite{matrix_analysis}.
\begin{lemma}\label{rho_bound}
Let~$\mb{X}\in\mathbb{R}^{d\times d}$ be a non-negative matrix and~$\mb{x}\in\mathbb{R}^d$ be a positive vector. If~$\mb{X}\mb{x}<\mb{x}$, then~$\rho(\mb{X})<1$. Moreover, if~$\mb{X}\mb{x}\leq\beta\mb{x}$, for some~$\beta\in\mathbb{R}$, then~$\rho(\mb{X})\leq\beta$.
\end{lemma}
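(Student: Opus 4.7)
The plan is to establish both claims using the weighted max-norm technique, a classical tool in Perron-Frobenius theory that is tailor-made for hypotheses of the form $\mb{X}\mb{x}\leq\beta\mb{x}$.

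First I would equip $\mathbb{R}^d$ with the weighted $\ell_\infty$ norm $\|\mb{y}\|_{\mb{x}} := \max_{1\leq i\leq d} |y_i|/x_i$, which is well-defined because the given vector~$\mb{x}$ is entrywise positive. A direct supremum-of-ratios computation shows that the induced operator norm of a matrix~$\mb{A}$ satisfies $\|\mb{A}\|_{\mb{x}} = \max_i \frac{1}{x_i}\sum_j |A_{ij}|\,x_j$. Specializing to the nonnegative matrix~$\mb{X}$ eliminates the absolute values and gives $\|\mb{X}\|_{\mb{x}} = \max_i (\mb{X}\mb{x})_i/x_i$.

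Next I would invoke the hypothesis: if $\mb{X}\mb{x}\leq\beta\mb{x}$ componentwise, then $(\mb{X}\mb{x})_i/x_i\leq\beta$ for every~$i$, so $\|\mb{X}\|_{\mb{x}}\leq\beta$. Combined with the standard fact that the spectral radius is dominated by any induced operator norm, i.e., $\rho(\mb{X})\leq\|\mb{X}\|_{\mb{x}}$, this yields $\rho(\mb{X})\leq\beta$, which is the second claim. The first claim then follows by setting $\beta := \max_i (\mb{X}\mb{x})_i/x_i$; the strict componentwise inequality $\mb{X}\mb{x}<\mb{x}$ together with the fact that the maximum is over finitely many indices ensures $\beta<1$, whence $\rho(\mb{X})<1$.

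There is no substantive obstacle: the statement is a textbook result quoted from the referenced matrix-analysis source, and the weighted-norm argument above is entirely elementary. An equally quick alternative is to cite the Collatz-Wielandt characterization $\rho(\mb{X}) = \inf_{\mb{y}>0}\max_i (\mb{X}\mb{y})_i/y_i$ for nonnegative matrices, which makes the conclusion immediate upon substituting the given positive vector~$\mb{x}$ for~$\mb{y}$.
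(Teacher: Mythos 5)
Your proof is correct. Note that the paper itself does not prove this lemma at all --- it is quoted directly from the cited matrix-analysis reference --- so there is no in-paper argument to compare against. Your weighted max-norm derivation (diagonal similarity by $\mathrm{diag}(\mb{x})$, giving $\|\mb{X}\|_{\mb{x}}=\max_i (\mb{X}\mb{x})_i/x_i \geq \rho(\mb{X})$) is a standard and complete self-contained proof of both claims, and the Collatz--Wielandt alternative you mention is equally valid; the only cosmetic remark is that in the second claim the hypothesis forces $\beta\geq 0$ automatically, so no case analysis on the sign of $\beta$ is needed.
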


\begin{lemma}
If~$0<\alpha\leq\min\left\{\frac{(1-\lambda^2)^2}{35\lambda},\frac{\sqrt{n}}{\sqrt{8m}}\right\}\frac{1}{L}$, then we have~$\rho(\G_\alpha)<1$ and thus~$\sum_{k=0}^{\infty}\G_\alpha^k = \left(\I_3 - \G_\alpha\right)^{-1}$.
\end{lemma}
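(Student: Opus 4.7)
The plan is to apply Lemma~\ref{rho_bound} directly: exhibit a strictly positive vector $\mathbf{v}\in\mathbb{R}^3$ such that $\G_\alpha \mathbf{v}<\mathbf{v}$ componentwise under the stated step-size bound, which will imply $\rho(\G_\alpha)<1$. The Neumann series conclusion $\sum_{k=0}^{\infty}\G_\alpha^k=(\I_3-\G_\alpha)^{-1}$ is then immediate from standard matrix-analytic facts. Since the diagonal entries of $\G_\alpha$ are $\tfrac{1+\lambda^2}{2},\,1-\tfrac{1}{4m},\,\tfrac{1+\lambda^2}{2}$, all strictly less than $1$, the task reduces to choosing $\mathbf{v}$ whose components are balanced well enough that the off-diagonal contributions do not destroy the strict inequality.

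First, I would write out the three inequalities in $\G_\alpha\mathbf{v}<\mathbf{v}$ for $\mathbf{v}=(v_1,v_2,v_3)^\top$. These simplify to: (i) $\tfrac{2\lambda^2\alpha^2L^2}{1-\lambda^2}v_3<\tfrac{1-\lambda^2}{2}v_1$, i.e.\ an upper bound $v_3<\tfrac{(1-\lambda^2)^2}{4\lambda^2\alpha^2L^2}v_1$; (ii) $\tfrac{9}{4m}v_1<\tfrac{1}{4m}v_2$, i.e.\ $v_2>9v_1$; and (iii) $\tfrac{30.5}{1-\lambda^2}v_1+\tfrac{97}{8}v_2<\tfrac{1-\lambda^2}{2}v_3$, giving a lower bound on $v_3$ in terms of $v_1,v_2$. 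Note that (ii) and (iii) impose no constraint on $\alpha$ at all; only (i) ties the bound on $v_3$ to the step-size.

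Next, I would fix the ansatz $v_1=1$, $v_2=10$ (which satisfies (ii)), and choose $v_3$ of the form $v_3=\tfrac{c}{(1-\lambda^2)^2}$ for a constant $c$ to be determined. The lower bound (iii) becomes $c>\tfrac{2\cdot 30.5}{1}+\tfrac{2\cdot 97\cdot 10 (1-\lambda^2)}{8}\leq 61+ \tfrac{1940}{8}\leq 304$; hence any $c$ strictly larger than $304$ works, e.g.\ $c=306$. The upper bound (i) then reads $\tfrac{c}{(1-\lambda^2)^2}<\tfrac{(1-\lambda^2)^2}{4\lambda^2\alpha^2L^2}$, i.e.\ $\alpha<\tfrac{(1-\lambda^2)^2}{2\sqrt{c}\,\lambda L}$. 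With $c=306$, $2\sqrt{c}\approx 35$, so the stated bound $\alpha\leq\tfrac{(1-\lambda^2)^2}{35\lambda L}$ suffices to make (i) a strict inequality.

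The only obstacle is the arithmetic bookkeeping: I must verify that the constants $30.5$, $97/8$, and $9/4m$ in $\G_\alpha$ combine into a lower bound on $c$ that is strictly below $(35/2)^2=306.25$, so that a valid $c$ exists and the constant $35$ in the step-size bound closes exactly. The second part of the hypothesis, $\alpha\leq\tfrac{\sqrt{n}}{\sqrt{8m}L}$, is inherited verbatim from Proposition~\ref{LTI_ncvx}, which is required only to guarantee that the LTI inequality $\mathbf{u}^{k+1}\leq\G_\alpha\mathbf{u}^k+\mathbf{b}^k$ holds in the first place. Once $\rho(\G_\alpha)<1$ is shown, the identity $\sum_{k=0}^{\infty}\G_\alpha^k=(\I_3-\G_\alpha)^{-1}$ follows from the standard Neumann series representation for matrices with spectral radius less than one.
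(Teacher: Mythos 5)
Your proposal is correct and follows essentially the same route as the paper: both invoke Lemma~\ref{rho_bound} with the test vector $(1,\,10,\,c/(1-\lambda^2)^2)$, reduce the three componentwise inequalities to a single constraint tying $\alpha$ to $c$, and close with $2\sqrt{c}\leq 35$ (the paper takes $c=303.5$, you take $c=306$; both satisfy $\sqrt{4c}<35$). The only cosmetic issue is the chained inequality "$c>\cdots\leq 304$", which should be phrased as "the required lower bound on $c$ is at most $303.5$, so $c=306$ suffices."
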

\begin{proof}
In light of Lemma~\ref{rho_bound}, we find a positive vector~$\bds{\epsilon}=[\epsilon_1,\epsilon_2,\epsilon_3]^\top$ and the range of~$\alpha$ s.t. $\G_\alpha\bds{\epsilon} < \bds{\epsilon}$, i.e.,
\begin{align}
&\alpha^2 < \frac{(1-\lambda^2)^2}{4\lambda^2L^2}\dfrac{\epsilon_1}{\epsilon_3}, \label{e1}\\
&9\epsilon_1 < \epsilon_2, \label{e2}\\
&\frac{61}{\left(1-\lambda^2\right)^2}\epsilon_1 + \frac{97}{4\left(1-\lambda^2\right)}\epsilon_2 
<\epsilon_3,                    \label{e3}
\end{align}
Based on~\eqref{e2}, we set~$\epsilon_1 = 1$ and~$\epsilon_2 = 10$. Then based on~\eqref{e3}, we set~$\epsilon_3 = \frac{303.5}{(1-\lambda^2)^2}$. The proof follows by using the values of~$\epsilon_1$ and~$\epsilon_3$ in~\eqref{e1}.
\end{proof} 

Based on the LTI dynamics in Proposition~\ref{LTI_ncvx}, we derive the following lemma that is the key to establish the convergence of~$\textbf{\texttt{GT-SAGA}}$ for general smooth nonconvex functions. 
\begin{lemma}\label{incremental_sums_vec}
If~$0<\alpha\leq\min\Big\{\frac{(1-\lambda^2)^2}{35\lambda},\frac{\sqrt{n}}{\sqrt{8m}}\Big\}\frac{1}{L}$, then we have:~$\forall K\geq1$,
\begin{align*}
\sum_{k=0}^{K}\mb{u}^k
\leq\left(\I-\G_\a\right)^{-1}\bigg(\mb{u}^0 
+ \mb{b}\sum_{k=0}^{K-1}\mbb{E}\big[\|\ol{\nabla \mb{f}}(\mb{x}^k)\|^2\big]\bigg).
\end{align*}
\end{lemma}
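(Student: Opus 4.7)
The plan is to iterate the linear time-invariant recursion from Proposition~\ref{LTI_ncvx} and then sum the resulting inequality, exploiting the fact that both $\G_\a$ and $\mb{b}^k$ are entrywise nonnegative.

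First, I would unroll the one-step inequality $\mb{u}^{k+1}\leq \G_\a \mb{u}^k+\mb{b}^k$. Since $\G_\a$ is a matrix with nonnegative entries, multiplication by $\G_\a$ preserves the componentwise inequality, so a straightforward induction yields
\begin{equation*}
\mb{u}^{k}\leq \G_\a^{k}\mb{u}^0+\sum_{t=0}^{k-1}\G_\a^{k-1-t}\mb{b}^{t}, \qquad \forall k\geq 0,
\end{equation*}
with the empty sum convention at $k=0$. Summing this inequality over $k$ from $0$ to $K$ and swapping the order of summation in the double sum gives
\begin{equation*}
\sum_{k=0}^{K}\mb{u}^{k}\leq \Bigl(\sum_{k=0}^{K}\G_\a^{k}\Bigr)\mb{u}^0+\sum_{t=0}^{K-1}\Bigl(\sum_{j=0}^{K-1-t}\G_\a^{j}\Bigr)\mb{b}^{t}.
\end{equation*}

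Next, I would use the previous lemma to conclude that under the stated step-size condition, $\rho(\G_\a)<1$, which implies that the Neumann series $\sum_{j=0}^{\infty}\G_\a^{j}$ converges to $(\I_3-\G_\a)^{-1}$. Because $\G_\a$ is entrywise nonnegative, every partial sum $\sum_{j=0}^{N}\G_\a^{j}$ is also entrywise nonnegative and bounded above, entrywise, by $(\I_3-\G_\a)^{-1}$; in particular $(\I_3-\G_\a)^{-1}$ itself has nonnegative entries as a limit of nonnegative matrices. Since $\mb{u}^0\geq \mb{0}$ and $\mb{b}^{t}\geq \mb{0}$ for every $t$, I can replace the partial sums of $\G_\a^{j}$ in the inequality above by $(\I_3-\G_\a)^{-1}$ without flipping the direction of the inequality.

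This substitution yields
\begin{equation*}
\sum_{k=0}^{K}\mb{u}^{k}\leq (\I_3-\G_\a)^{-1}\Bigl(\mb{u}^0+\sum_{t=0}^{K-1}\mb{b}^{t}\Bigr),
\end{equation*}
and recalling $\mb{b}^{t}=\mb{b}\,\mbb{E}[\|\ol{\nabla\mb{f}}(\mb{x}^{t})\|^2]$ gives exactly the claimed bound. The only nontrivial ingredient is the monotonicity/nonnegativity argument that lets us upper bound the truncated Neumann partial sums by $(\I_3-\G_\a)^{-1}$; beyond that, the derivation is a routine unrolling plus a Fubini-type exchange of summation, so I expect no serious obstacle.
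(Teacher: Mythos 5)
Your proposal is correct and follows essentially the same route as the paper: unroll the LTI recursion, sum over $k$, exchange the order of summation, and upper-bound the truncated Neumann partial sums by $(\I_3-\G_\a)^{-1}$ using nonnegativity of $\G_\a$, $\mb{u}^0$, and $\mb{b}^k$ together with $\rho(\G_\a)<1$ from the preceding lemma. You are merely more explicit than the paper about why the partial sums can be replaced by the full inverse without reversing the entrywise inequality.
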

\begin{proof}
We recursively apply the dynamics in~Proposition~\ref{LTI_ncvx} to obtain:
$\mb{u}^{k} 
\leq \G_\a^k\mb{u}^0 + \sum_{r=0}^{k-1}\G_\a^r\mb{b}^{k-1-r},
\forall k\geq1.
$
We sum this inequality over~$k$ to obtain:~$\forall K\geq1$,
\begin{align*}
\sum_{k=0}^{K}\mb{u}^k
\leq&~\sum_{k=0}^{K}\G_\alpha^k\mb{u}^0+\sum_{k=1}^{K}\sum_{r=0}^{k-1}\G_\alpha^r\mb{b}^{k-1-r} \nonumber\\
\leq&~\left(\sum_{k=0}^{\infty}\G_\alpha^k\right)\mb{u}^0+\sum_{k=0}^{K-1}\left(\sum_{k=0}^{\infty}\G_\a^k\right)\mb{b}^k.
\end{align*}
The proof follows by~$\sum_{k=0}^{\infty}\G_\alpha^k = (\I-\G_\a)^{-1}$ and the definition of~$\mb{b}^k$ in Proposition~\ref{LTI_ncvx}.
\end{proof}


\begin{lemma}\label{I-G}
If~$0<\alpha\leq\min\left\{\frac{(1-\lambda^2)^2}{48\lambda},\frac{\sqrt{n}}{\sqrt{8m}}\right\}\frac{1}{L}$, then
\begin{align*}
(\I_3 - \G_\a)^{-1} 
\leq&
\begin{bmatrix}
\star & \frac{776\lambda^2m\alpha^2L^2}{(1-\lambda^2)^3} & \frac{16\lambda^2\alpha^2L^2}{(1-\lambda^2)^3} \\
\star &  8m & \frac{114\lambda^2\alpha^2L^2}{(1-\lambda^2)^3}\\
\star & \star & \star
\end{bmatrix}, \nonumber\\
(\I_3-\G_\a)^{-1}\mb{b} \leq&
\begin{bmatrix}
\big(3104m^2+\frac{256\lambda^2}{1-\lambda^2}\big)\frac{\lambda^2\a^4L^2}{(1-\lambda^2)^3}\\
33m^2\alpha^2 \\
\star
\end{bmatrix},
\end{align*}
where the~$\star$ entries are not needed for further derivations.
\end{lemma}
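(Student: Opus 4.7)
The plan is to compute the adjugate of $\I_3-\G_\a$ directly and then divide by the determinant, using the step-size constraint $\a \leq \tfrac{(1-\lambda^2)^2}{48\lambda L}$ throughout to control the off-diagonal perturbation. First I would write out
\[
\I_3-\G_\a=\begin{bmatrix}
\tfrac{1-\lambda^2}{2} & 0 & -\tfrac{2\lambda^2\a^2L^2}{1-\lambda^2}\\[2pt]
-\tfrac{9}{4m} & \tfrac{1}{4m} & 0\\[2pt]
-\tfrac{30.5}{1-\lambda^2} & -\tfrac{97}{8} & \tfrac{1-\lambda^2}{2}
\end{bmatrix},
\]
and expand the determinant along the first row. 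This produces a ``main term'' $\tfrac{(1-\lambda^2)^2}{16m}$ coming from the diagonal product and a ``correction term'' proportional to $\tfrac{\lambda^2\a^2L^2}{m(1-\lambda^2)}\bigl(\tfrac{9\cdot 97}{8}+\tfrac{30.5}{1-\lambda^2}\bigr)$. Using $\a^2L^2\leq \tfrac{(1-\lambda^2)^4}{48^2\lambda^2}$, the correction is no larger than a small multiple of $\tfrac{(1-\lambda^2)^3}{m}$ and in particular can be absorbed into half of the main term, yielding a clean lower bound of the form $\det(\I_3-\G_\a)\geq c\,\tfrac{(1-\lambda^2)^2}{m}$ for an explicit universal constant $c$.

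Next I would compute the four cofactors $C_{21},C_{31},C_{22},C_{32},C_{23},C_{33}$ needed for the entries of interest in column 2 and column 3 of the adjugate. Each cofactor is a $2\times 2$ determinant in the entries of $\I_3-\G_\a$, so they are elementary: the $(2,2)$-cofactor gives the main term $\tfrac{(1-\lambda^2)^2}{4}$, the $(2,3)$- and $(3,3)$-cofactors produce terms of order $\tfrac{\lambda^2\a^2L^2}{1-\lambda^2}$, and the $(1,2)$- and $(1,3)$-cofactors produce terms of order $\tfrac{\lambda^2\a^2L^2}{m(1-\lambda^2)}$ or $\tfrac{\a^2L^2}{m}$. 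Dividing each by the lower bound on the determinant derived above produces the matrix bound in the statement, with the explicit constants $8m$, $776$, $16$, $114$ following after straightforward numerical majorization; the step-size constraint is used again here to drop lower-order terms.

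For the second claim, I would simply carry out the matrix–vector product $(\I_3-\G_\a)^{-1}\mb{b}$ row by row. Row $1$ gives $[(\I_3-\G_\a)^{-1}]_{12}\cdot 4m\a^2+[(\I_3-\G_\a)^{-1}]_{13}\cdot\tfrac{16\lambda^2\a^2}{1-\lambda^2}$, which upon substituting the bounds from the first part produces exactly $\bigl(3104m^2+\tfrac{256\lambda^2}{1-\lambda^2}\bigr)\tfrac{\lambda^2\a^4L^2}{(1-\lambda^2)^3}$. Row $2$ gives $8m\cdot 4m\a^2+\tfrac{114\lambda^2\a^2L^2}{(1-\lambda^2)^3}\cdot\tfrac{16\lambda^2\a^2}{1-\lambda^2}=32m^2\a^2+\tfrac{1824\lambda^4\a^4L^2}{(1-\lambda^2)^4}$. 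The second summand here must be absorbed into an additional $m^2\a^2$, which reduces to checking that $\a L\lesssim \tfrac{m(1-\lambda^2)^2}{\lambda^2}$; this is implied by the assumed $\a L\leq \tfrac{(1-\lambda^2)^2}{48\lambda}$ since $m\geq 1$ and $\lambda<1$, and yields the $33m^2\a^2$ bound.

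The main obstacle, if any, is bookkeeping: one must keep track of the two natural small parameters $\tfrac{1}{m}$ (appearing on the $(2,2)$ diagonal of $\I_3-\G_\a$) and $\tfrac{\lambda^2\a^2L^2}{1-\lambda^2}$ (appearing in the $(1,3)$ entry) and not confuse them, since the leading behavior of the determinant is driven by the former while the nontrivial off-diagonal entries of the inverse are driven by the latter. The step-size threshold $\tfrac{(1-\lambda^2)^2}{48\lambda}$ is tuned precisely to make the determinant correction at most half of its leading term and simultaneously make the $\tfrac{\lambda^4\a^4L^2}{(1-\lambda^2)^4}$ residuals absorbable into $m^2\a^2$; no deeper argument is needed beyond the cofactor calculation.
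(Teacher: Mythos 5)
Your proposal is correct and follows essentially the same route as the paper: lower-bound $\det(\I_3-\G_\a)$ by half its diagonal product (the paper gets $\det(\I_3-\G_\a)\geq\tfrac{(1-\lambda^2)^2}{32m}$ under the same step-size restriction), compute the relevant adjugate entries as $2\times 2$ cofactors, divide, and then carry out the matrix--vector product with $\mb{b}$, absorbing the residual $\tfrac{\lambda^4\a^4L^4}{(1-\lambda^2)^4}$ term into the extra $m^2\a^2$. No substantive difference from the paper's argument.
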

\begin{proof}

In the following, for a matrix~$\mb{X}$, we denote~$\mb{X}^*$ as its adjugate and~$[\mb{X}]_{i,j}$ as its~$(i,j)$-th entry.
We first note that 
if~$0<\alpha\leq\tfrac{(1-\lambda^2)^2}{48\lambda L}$,
$\det\left(\I_3 - \G_\a\right) 
\geq \frac{(1-\lambda^2)^2}{32m}$.
We next derive upper bounds for entries of~$(\I_3-\G_\a)^*$:
\begin{align*}
&[(\I-\G_\a)^*]_{1,2} = \frac{97\lambda^2\alpha^2L^2}{4\left(1-\lambda^2\right)},
[(\I-\G_\a)^*]_{1,3} = \frac{\lambda^2\alpha^2L^2}{2m(1-\lambda^2)}, \\
&[(\I-\G_\a)^*]_{2,2} \leq \frac{(1-\lambda^2)^2}{4}, 
~[(\I-\G_\a)^*]_{2,3} = \frac{9\lambda^2\alpha^2L^2}{2m(1-\lambda^2)}.
\end{align*}
The upper bound on~$(\I_3-\G_\a)^{-1}$ then follows by using the above relations. Finally, we have:
\begin{align*}
(\I_3-\G_\a)^{-1}\mb{b} 
\leq
\begin{bmatrix}
\frac{3104\lambda^2m^2\alpha^4L^2}{(1-\lambda^2)^3}+\frac{256\lambda^4\alpha^4L^2}{(1-\lambda^2)^4}\\
32m^2\alpha^2 + \frac{2304\lambda^4\alpha^4L^2}{(1-\lambda^2)^4} \\
\star
\end{bmatrix}.
\end{align*}
If~$0<\alpha\leq\frac{(1-\lambda^2)^2}{48\lambda L}$, then
$32m^2\alpha^2 + \frac{2304\lambda^4\alpha^4L^2}{(1-\lambda^2)^4}\leq33m^2\a^2$ and the bound on~$(\I_3-\G_\a)^{-1}\mb{b}$  follows.
\end{proof}

We now bound two important quantities as follows.

\begin{lemma}\label{incremental_sums}
If~$0<\alpha\leq\min\Big\{\frac{(1-\lambda^2)^2}{48\lambda},\frac{\sqrt{n}}{\sqrt{8m}}\Big\}\frac{1}{L}$, then we have:~$\forall K\geq1$,
\begin{align}
&\sum_{k=0}^{K}\mbb{E}\left[\frac{1}{n}\|\mb{x}^k - \J\mb{x}^k\|^2\right]
\leq\frac{16\lambda^4\alpha^2}{(1-\lambda^2)^3}\frac{\|\nf(\x^0)\|^2}{n} \nonumber\\
&+ \left(97m^2+\dfrac{8\lambda^2}{1-\lambda^2}\right)\dfrac{32\lambda^2\a^4L^2}{(1-\lambda^2)^3}\sum_{k=0}^{K-1}\mbb{E}\big[\|\ol{\nabla \mb{f}}(\mb{x}^k)\|^2\big], \label{consensus_sum}
\end{align}
and,~$\forall K\geq1$,
\begin{align}
\sum_{k=0}^{K}\mbb{E}\left[t^k\right] \leq&~\dfrac{114\lambda^4\alpha^2}{(1-\lambda^2)^3}\frac{\|\nf(\x^0)\|^2}{n}
\nonumber\\
&+33m^2\alpha^2\sum_{k=0}^{K-1}\mbb{E}\big[\|\ol{\nabla \mb{f}}(\mb{x}^k)\|^2\big]. \label{tk_sum}
\end{align}
\end{lemma}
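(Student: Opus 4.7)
The plan is to deduce Lemma~\ref{incremental_sums} directly from Lemma~\ref{incremental_sums_vec} combined with the entry-wise bounds on $(\I_3-\G_\a)^{-1}$ and $(\I_3-\G_\a)^{-1}\mb{b}$ from Lemma~\ref{I-G}, after first pinning down the initial vector $\mb{u}^0$. The two inequalities to be proved are simply the first and second scalar rows of the vector inequality in Lemma~\ref{incremental_sums_vec}.

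First I would evaluate $\mb{u}^0$ using the initialization prescribed in Algorithm~\ref{GT-SAGA}. Since $\mb{x}_i^0 = \ol{\mb{x}}^0$ for all $i$, we immediately get $\|\mb{x}^0 - \J\mb{x}^0\|^2 = 0$. Since $\mb{z}_{i,j}^0 = \mb{x}_i^0 = \ol{\mb{x}}^0$ for all $i,j$, we also get $t^0 = 0$. The only nonzero component is the gradient tracker term. Using $\mb{y}^0 = \mb{0}$, $\mb{g}^{-1} = \mb{0}$, and the recursion \eqref{gtsaga_y}, we have $\mb{y}^1 = \W\mb{g}^0$. The key observation is that at iteration $0$ the SAGA estimator satisfies $\mb{g}_i^0 = \nabla f_i(\mb{x}_i^0)$ deterministically, because the two component-gradient terms in the definition of $\mb{g}_i^0$ cancel when $\mb{z}_{i,\tau_i^0}^0 = \mb{x}_i^0$, leaving only the average $\frac{1}{m}\sum_j \nabla f_{i,j}(\mb{x}_i^0) = \nabla f_i(\mb{x}_i^0)$. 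Hence $\mb{g}^0 = \nf(\mb{x}^0)$, and $\|\mb{y}^1 - \J\mb{y}^1\|^2 = \|(\W-\J)\nf(\mb{x}^0)\|^2 \leq \lambda^2\|\nf(\mb{x}^0)\|^2$ by Lemma~\ref{basic}(\ref{W}). Putting it together, the only nonzero entry of $\mb{u}^0$ is its third one, and it is bounded above by $\tfrac{\lambda^2}{nL^2}\|\nf(\mb{x}^0)\|^2$.

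Next, I would plug this $\mb{u}^0$ into the conclusion of Lemma~\ref{incremental_sums_vec} and extract the first and second rows. Because $u^0_1 = u^0_2 = 0$, only the $(1,3)$ and $(2,3)$ entries of $(\I-\G_\a)^{-1}$ contribute to the initial-condition term, and Lemma~\ref{I-G} provides exactly the bounds we need: $[(\I-\G_\a)^{-1}]_{1,3} \leq \tfrac{16\lambda^2\alpha^2 L^2}{(1-\lambda^2)^3}$ and $[(\I-\G_\a)^{-1}]_{2,3} \leq \tfrac{114\lambda^2\alpha^2 L^2}{(1-\lambda^2)^3}$. Multiplying by the bound on $u^0_3$ yields the leading $\tfrac{16\lambda^4\alpha^2}{(1-\lambda^2)^3}\tfrac{\|\nf(\mb{x}^0)\|^2}{n}$ term in \eqref{consensus_sum} and the corresponding $\tfrac{114\lambda^4\alpha^2}{(1-\lambda^2)^3}\tfrac{\|\nf(\mb{x}^0)\|^2}{n}$ term in \eqref{tk_sum}, with the $L^2$'s cancelling neatly.

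Finally, the coefficient multiplying $\sum_{k=0}^{K-1}\E[\|\ol{\nabla\mb{f}}(\mb{x}^k)\|^2]$ in each row is read off directly from $(\I_3-\G_\a)^{-1}\mb{b}$ as bounded in Lemma~\ref{I-G}: the first entry gives $\bigl(3104 m^2 + \tfrac{256\lambda^2}{1-\lambda^2}\bigr)\tfrac{\lambda^2\alpha^4 L^2}{(1-\lambda^2)^3}$, which factors as $\bigl(97 m^2 + \tfrac{8\lambda^2}{1-\lambda^2}\bigr)\tfrac{32\lambda^2\alpha^4 L^2}{(1-\lambda^2)^3}$ as stated in \eqref{consensus_sum}, while the second entry is exactly $33 m^2\alpha^2$ as stated in \eqref{tk_sum}. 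The step-size constraint $0<\alpha\leq\min\{(1-\lambda^2)^2/(48\lambda),\sqrt{n}/\sqrt{8m}\}/L$ is inherited verbatim from Lemma~\ref{I-G} (which already subsumes the requirement in Lemma~\ref{incremental_sums_vec}), so no further restriction is needed. The whole proof is essentially a one-line application of Lemmas~\ref{incremental_sums_vec} and~\ref{I-G}; the only mildly subtle step is the deterministic identification $\mb{g}^0 = \nf(\mb{x}^0)$ used to control $\mb{u}^0_3$, which is where I would be most careful.
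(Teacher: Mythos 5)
Your proposal is correct and follows essentially the same route as the paper: the paper's proof likewise bounds $\|\mb{y}^1-\J\mb{y}^1\|^2\leq\lambda^2\|\nf(\x^0)\|^2$ via the initialization (with $\mb{g}^0=\nf(\x^0)$ since $\mb{z}_{i,j}^0=\x_i^0$) and then applies Lemma~\ref{I-G} to Lemma~\ref{incremental_sums_vec}. Your write-up simply makes explicit the vanishing of the first two entries of $\mb{u}^0$ and the arithmetic $3104=97\cdot32$, $256=8\cdot32$, which the paper leaves implicit.
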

\begin{proof}
{\color{black}By~\eqref{gtsaga_y}, we have
$\|\mb{y}^1-\J\mb{y}^1\|^2 
=\|(\W-\J)(\mb{y}^0 + \g^{0} - \g^{-1})\|^2   
\leq\lambda^2\|\nf(\x^0)\|^2.
$
The proof then follows by applying this inequality and Lemma~\ref{I-G} to Lemma~\ref{incremental_sums_vec}.}
\end{proof}
Now, we are ready to prove Theorem~\ref{main_ncvx}.
\begin{P1}
We sum up the inequality in Lemma~\ref{descent} over~$k$ to obtain: if~$0<\alpha\leq\frac{1}{2L}$, then~$\forall K\geq1$,
\begin{align}\label{descent_ncvx_sum_0}
\mathbb{E}\big[F(\ol{\mb x}^{K})\big]  
\leq&~F(\ol{\mb x}^{0}) -\frac{\alpha}{2}\sum_{k=0}^{K-1}\mathbb{E}\big[\|\nabla F(\ol{\mb x}^{k})\|^2\big] \nonumber\\ &-\frac{\alpha}{4}\sum_{k=0}^{K-1}\mathbb{E}\big[\|\ol{\nabla\mb{f}}(\mb{x}^{k})\|^2\big] + \frac{\alpha^2L^3}{n}\sum_{k=0}^{K-1}\mathbb{E}\big[t^k\big]\nonumber\\
&+ \alpha L^2 \sum_{k=0}^{K-1}\mathbb{E}\Big[\frac{1}{n}\|\mb{x}^k - \J\mb{x}^k\|^2\Big] .
\end{align}
By the~$L$-smoothness of~$F$, we have:
$\frac{1}{2n}\sum_{i=1}^n\|\nabla F(\x_i^k)\|^2    
\leq \|\nabla F(\ol{\x}^k)\|^2 + \frac{L^2}{n}\|\x^k-\J\x^k\|^2$,~$\forall k\geq0$.
Using this inequality in~\eqref{descent_ncvx_sum_0}, we obtain: if~$0<\alpha\leq\frac{1}{2L}$, then~$\forall K\geq1$,
\begin{align}\label{descent_ncvx_sum_1}
\mathbb{E}\big[F(\ol{\mb x}^{K})\big] 
\leq&~F(\ol{\mb x}^{0})
-\frac{\alpha}{4n}\sum_{i=1}^n\sum_{k=0}^{K-1}\mathbb{E}\big[\|\nabla F(\x_i^{k})\|^2\big] \nonumber\\
&-\frac{\alpha}{4}\sum_{k=0}^{K-1}\mathbb{E}\big[\|\ol{\nabla\mb{f}}(\mb{x}^{k})\|^2\big]  + \frac{\alpha^2L^3}{n}\sum_{k=0}^{K-1}\mathbb{E}\big[t^k\big]\nonumber\\
&+ \frac{3\alpha L^2}{2} \sum_{k=0}^{K-1}\mathbb{E}\Big[\frac{1}{n}\|\mb{x}^k - \J\mb{x}^k\|^2\Big].
\end{align}
Applying~\eqref{tk_sum} to~\eqref{descent_ncvx_sum_1}, we obtain the following inequality: if~$0<\alpha\leq\min\Big\{\frac{(1-\lambda^2)^2}{48\lambda},\frac{\sqrt{n}}{\sqrt{8m}},\frac{1}{2}\Big\}\frac{1}{L}$, then $\forall K\geq1$,
\begin{align}\label{descent_ncvx_sum_2}
\mathbb{E}\big[F&(\ol{\mb x}^{K})\big]  
\leq F(\ol{\mb x}^{0})
-\frac{\alpha}{4n}\sum_{i=1}^n\sum_{k=0}^{K-1}\mathbb{E}\big[\|\nabla F(\x_i^{k})\|^2\big] \nonumber\\
&-\frac{\alpha}{8}\sum_{k=0}^{K-1}\mathbb{E}\big[\|\ol{\nabla\mb{f}}(\mb{x}^{k})\|^2\big] + \dfrac{114\lambda^4\alpha^4L^3}{n(1-\lambda^2)^3}\frac{\|\nf(\x^0)\|^2}{n}  \nonumber\\
& + \frac{3\alpha L^2}{2} \sum_{k=0}^{K-1}\mathbb{E}\Big[\frac{1}{n}\|\mb{x}^k - \J\mb{x}^k\|^2\Big] \nonumber\\
&-\frac{\alpha}{8}\left(1 - \frac{264m^2\alpha^3L^3}{n}\right)\sum_{k=0}^{K-1}\mbb{E}\big[\|\ol{\nabla \mb{f}}(\mb{x}^k)\|^2\big].
\end{align}
If~$0<\a\leq\frac{2n^{1/3}}{13m^{2/3}L}$, ${1 - \frac{264m^2\alpha^3L^3}{n}\geq0}$ and thus the last term in~\eqref{descent_ncvx_sum_2} may be dropped. {\color{black}We then use~\eqref{consensus_sum} in~\eqref{descent_ncvx_sum_2} to obtain: if $0<\alpha\leq\min\!\Big\{\!\frac{(1-\lambda^2)^2}{48\lambda},\frac{2n^{1/3}}{13m^{2/3}},\frac{1}{2}\!\Big\}\frac{1}{L}$, then~$\forall K\geq1$, 
\begin{align}\label{descent_ncvx_sum_3}
\mathbb{E}\big[F(\ol{\mb x}^{K})\big] 
\leq&~F(\ol{\mb x}^{0})
-\frac{\alpha}{4n}\sum_{i=1}^n\sum_{k=0}^{K-1}\mathbb{E}\big[\|\nabla F(\x_i^{k})\|^2\big] \nonumber\\
-&\frac{\alpha L^2}{4} \sum_{k=0}^{K-1}\mathbb{E}\left[\frac{1}{n}\|\mb{x}^k - \J\mb{x}^k\|^2\right] \nonumber\\
+&\left(\frac{114\a L}{28n}+1\right)\dfrac{28\lambda^4\alpha^3L^2}{(1-\lambda^2)^3}\frac{\|\nf(\x^0)\|^2}{n}
  \nonumber\\
-&\frac{\a}{8}\left(1 - \max\left\{97m^2,\dfrac{8\lambda^2}{1-\lambda^2}\right\}\dfrac{896\lambda^2\a^4L^4}{(1-\lambda^2)^3}\right) \nonumber\\
&\qquad\times\sum_{k=0}^{K-1}\mbb{E}\left[\|\ol{\nabla \mb{f}}(\mb{x}^k)\|^2\right].
\end{align}
We note that if~$0<\a\leq\min\left\{\frac{(1-\lambda^2)^{3/4}}{18\lambda^{1/2}m^{1/2}},\frac{1-\lambda^2}{12\lambda}\right\}\frac{1}{L}$, then $\max\big\{97m^2,\frac{8\lambda^2}{1-\lambda^2}\big\}\frac{896\lambda^2\a^4L^4}{(1-\lambda^2)^3}\leq1$ and the last term in~\eqref{descent_ncvx_sum_3} may be dropped. Therefore, if~$0<\alpha\leq\ol{\a}_1$ for~$\ol{\alpha}_1$ defined in Theorem~\ref{main_ncvx}, we obtain from~\eqref{descent_ncvx_sum_3} that~$\forall K\geq1$,
\begin{align}\label{descent_ncvx_sum_4}
&\mathbb{E}\big[F(\ol{\mb x}^{K})\big] 
\leq F(\ol{\mb x}^{0})
-\frac{\alpha}{4n}\sum_{i=1}^n\sum_{k=0}^{K-1}\mathbb{E}\big[\|\nabla F(\x_i^{k})\|^2\big] \nonumber\\ 
&-\frac{\alpha L^2}{4} \sum_{k=0}^{K-1}\mathbb{E}\Big[\frac{1}{n}\|\mb{x}^k - \J\mb{x}^k\|^2\Big]
+\dfrac{112\lambda^4\alpha^3L^2}{(1-\lambda^2)^3}\frac{\|\nf(\x^0)\|^2}{n}.
\end{align}
Since~$F$ is bounded below by~$F^*$,~\eqref{descent_ncvx_sum_4} leads to,~$\forall K\geq1$,    
\begin{align}\label{DS_final}
&\sum_{k=0}^{K-1}\frac{1}{n}\sum_{i=1}^n\mathbb{E}\Big[\|\nabla F(\x_i^{k})\|^2
+ L^2\|\mb{x}_i^k - \ol{\mb{x}}^k\|^2\Big]
\nonumber\\
\leq&~\frac{4(F(\ol{\mb x}^{0}) - F^*)}{\a}
+\dfrac{448\lambda^4\alpha^2L^2}{(1-\lambda^2)^3}\frac{\|\nf(\x^0)\|^2}{n}.
\end{align}
Since the RHS of~\eqref{DS_final} is finite and independent of~$K$, we let~$K\rightarrow\infty$ in~\eqref{DS_final} to obtain:
\begin{align}\label{DS_ms}
&\sum_{k=0}^{\infty}\sum_{i=1}^n\mathbb{E}\Big[\|\nabla F(\x_i^{k})\|^2
+ \|\mb{x}_i^k - \ol{\mb{x}}^k\|^2\Big]
<\infty,
\end{align}
which shows that all nodes in \textbf{\texttt{GT-SAGA}} asymptotically agree on a stationary point of~$F$ in the mean-squared sense. Moreover, since the series on the LHS of~\eqref{DS_ms} is nonnegative, we may exchange the order of the series and expectation to obtain~\cite{probability_with_martinagles}: $\mathbb{E}\left[\sum_{k=0}^{\infty}\sum_{i=1}^n(\|\nabla F(\x_i^{k})\|^2
+ \|\mb{x}_i^k - \ol{\mb{x}}^k\|^2)\right]
<\infty$, which implies that 
\begin{align}\label{DS_as}
\mathbb{P}\left(\sum_{k=0}^{\infty}\sum_{i=1}^n\Big(\|\nabla F(\x_i^{k})\|^2
+ \|\mb{x}_i^k - \ol{\mb{x}}^k\|^2\Big)<\infty\right)=1,    
\end{align}
i.e., all nodes in \textbf{\texttt{GT-SAGA}} asymptotically agree on a stationary point of~$F$ in the almost sure sense. Finally,
towards the iteration complexity of \textbf{\texttt{GT-SAGA}}, we set~$\alpha = \ol{\alpha}_1$ in~\eqref{DS_final} and divide the resulting inequality by~$K$ to obtain:~$\forall K\geq1$,
\begin{align}\label{DS_rate}
&\frac{1}{n}\sum_{i=1}^n\frac{1}{K}\sum_{k=0}^{K-1}\mathbb{E}\big[\|\nabla F(\x_i^{k})\|^2\big]
\nonumber\\
\leq&~\frac{4(F(\ol{\mb x}^{0}) - F^*)}{\ol{\a}_1 K}
+\dfrac{448\lambda^4\ol{\a}_1^2L^2}{(1-\lambda^2)^3 K}\frac{\|\nf(\x^0)\|^2}{n}.
\end{align}
Based on~\eqref{DS_rate}, the iteration complexity of \textbf{\texttt{GT-SAGA}} then follows by recalling the definition of~$\ol{\a}_1$ in Theorem~\ref{main_ncvx} and that~$\frac{448\lambda^4\ol{\a}_1^2L^2}{(1-\lambda^2)^3}\leq\frac{\lambda^2(1-\lambda^2)}{4}$ since~$0<\ol{\a}_1\leq\frac{(1-\lambda^2)^2}{48\lambda L}$.}
\end{P1}

\subsection{Proof of Theorem~\ref{main_PL}} \label{PL_sec}
In this subsection, we prove the linear rate of \textbf{\texttt{GT-SAGA}} when the global function~$F$ additionally satisfies the PL condition. In particular, we use the PL condition and Lemma~\ref{basic}(\ref{upper_L}) to refine the descent inequality in Lemma~\ref{descent} and the previously obtained LTI system in Proposition~\ref{LTI_ncvx}. 

\begin{lemma}\label{descent_PL}
If~$0<\alpha\leq\frac{1}{2L}$, then~$\forall k\geq0$,
\begin{align*}
&\mathbb{E}\big[F(\ol{\mb x}^{k+1}) - F^*|\F^k\big] \nonumber\\
\leq&~(1-\mu\a)(F(\ol{\mb{x}}^k)-F^*)
+ \frac{\alpha L^2}{n}\|\mb{x}^k - \J\mb{x}^k\|^2
+ \frac{\alpha^2L^3}{n}t^k.
\end{align*}
\end{lemma}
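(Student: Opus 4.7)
The proof is essentially a one-line refinement of Lemma~\ref{descent} using the PL condition, so my plan is short. I would start from the descent inequality already established in Lemma~\ref{descent}, which under the step-size condition $0<\alpha\leq\frac{1}{2L}$ gives
\begin{align*}
\mathbb{E}\big[F(\ol{\mb x}^{k+1})\,|\,\F^k\big]
\leq F(\ol{\mb x}^{k})
-\tfrac{\alpha}{2}\|\nabla F(\ol{\mb x}^{k})\|^2
-\tfrac{\alpha}{4}\|\ol{\nabla\mb{f}}(\mb{x}^{k})\|^2
+ \tfrac{\alpha L^2}{n}\|\mb{x}^k-\J\mb{x}^k\|^2
+ \tfrac{\alpha^2 L^3}{n}t^k.
\end{align*}

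Next, I would invoke Assumption~\ref{PL}, i.e., $\|\nabla F(\ol{\mb x}^k)\|^2 \geq 2\mu(F(\ol{\mb x}^k) - F^*)$, to lower-bound the squared-gradient descent term, which yields $-\tfrac{\alpha}{2}\|\nabla F(\ol{\mb x}^k)\|^2 \leq -\alpha\mu(F(\ol{\mb x}^k) - F^*)$. Subtracting $F^*$ from both sides of the descent inequality and discarding the nonpositive term $-\tfrac{\alpha}{4}\|\ol{\nabla\mb{f}}(\mb{x}^k)\|^2$ then rearranges into
\begin{align*}
\mathbb{E}\big[F(\ol{\mb x}^{k+1}) - F^*\,|\,\F^k\big]
\leq (1-\mu\alpha)\big(F(\ol{\mb x}^{k})-F^*\big)
+ \tfrac{\alpha L^2}{n}\|\mb{x}^k-\J\mb{x}^k\|^2
+ \tfrac{\alpha^2 L^3}{n}t^k,
\end{align*}
which is the claim.

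There is no serious obstacle here: the PL condition is precisely designed so that the $-\tfrac{\alpha}{2}\|\nabla F(\ol{\mb x}^k)\|^2$ term converts into linear contraction of the optimality gap, and the step-size restriction $\alpha\leq 1/(2L)$ needed to apply Lemma~\ref{descent} is already included in the definition of $\ol{\alpha}_2$ in Theorem~\ref{main_PL}. The consensus error $\|\mb{x}^k-\J\mb{x}^k\|^2$ and the auxiliary term $t^k$ are simply carried along unchanged, and will later be controlled through the LTI argument built from the bounds in Subsections~\ref{sec_bvr}--\ref{sec_sgt} to close the full linear-rate proof of Theorem~\ref{main_PL}.
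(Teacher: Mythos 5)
Your proposal is correct and follows exactly the paper's own (one-line) argument: apply the PL condition to the $-\tfrac{\alpha}{2}\|\nabla F(\ol{\mb x}^{k})\|^2$ term in Lemma~\ref{descent}, discard the nonpositive $-\tfrac{\alpha}{4}\|\ol{\nabla\mb{f}}(\mb{x}^{k})\|^2$ term, and subtract $F^*$ from both sides. No gaps.
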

\begin{proof}
Apply the PL condition to Lemma~\ref{descent} and then subtract~$F^*$ from the resulting inequality.
\end{proof}

Next, we refine Corollary~\ref{aux_ctr} as follows.
\begin{lemma}\label{aux_PL}
If~$0<\alpha\leq\frac{\sqrt{n}}{\sqrt{8m}L}$, then~$\forall k\geq0$,
\begin{align*}
\mbb{E}\big[t^{k+1}|\mc{F}^k\big]
\leq&~
\Big(1-\frac{1}{4m}\Big)t^k
+16m\alpha^2L\left(F(\ol{\x}^k) - F^*\right) \nonumber\\
&+\Big(8m\alpha^2L^2+\frac{9}{4m}\Big)
\frac{1}{n}\|\mb{x}^{k}-\J\mb{x}^{k}\|^2.
\end{align*}
\end{lemma}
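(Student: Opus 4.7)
The plan is to refine Corollary~\ref{aux_ctr} by rewriting its $\|\ol{\nabla\mb{f}}(\mb{x}^k)\|^2$ term in terms of the optimality gap $F(\ol{\x}^k) - F^*$ and the consensus error, using the $L$-smoothness of $F$ via Lemma~\ref{basic}(\ref{upper_L}). The step-size constraint $0<\alpha\leq\frac{\sqrt{n}}{\sqrt{8m}L}$ is precisely what Corollary~\ref{aux_ctr} already requires, so no new conditions need to be imposed.

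First, I would apply the triangle inequality (in squared form, with constant $2$) to split
\begin{align*}
\|\ol{\nabla\mb{f}}(\mb{x}^k)\|^2
\leq 2\|\ol{\nabla\mb{f}}(\mb{x}^k) - \nabla F(\ol{\mb{x}}^k)\|^2
+ 2\|\nabla F(\ol{\mb{x}}^k)\|^2.
\end{align*}
The first piece is controlled by Lemma~\ref{basic}(\ref{Lbound}), which gives $\|\ol{\nabla\mb{f}}(\mb{x}^k) - \nabla F(\ol{\mb{x}}^k)\|^2 \leq \frac{L^2}{n}\|\mb{x}^k-\J\mb{x}^k\|^2$. The second piece is controlled by Lemma~\ref{basic}(\ref{upper_L}), which gives $\|\nabla F(\ol{\mb{x}}^k)\|^2 \leq 2L(F(\ol{\mb{x}}^k) - F^*)$. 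Combining these yields
\begin{align*}
\|\ol{\nabla\mb{f}}(\mb{x}^k)\|^2
\leq \frac{2L^2}{n}\|\mb{x}^k-\J\mb{x}^k\|^2 + 4L\bigl(F(\ol{\mb{x}}^k) - F^*\bigr).
\end{align*}

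Next, I would substitute this bound into the $4m\alpha^2\|\ol{\nabla\mb{f}}(\mb{x}^k)\|^2$ term appearing in Corollary~\ref{aux_ctr}, which produces a contribution of $16m\alpha^2 L (F(\ol{\mb{x}}^k) - F^*)$ together with an extra consensus-error contribution of $\frac{8m\alpha^2 L^2}{n}\|\mb{x}^k-\J\mb{x}^k\|^2$. Folding the latter into the existing $\frac{9}{4mn}\|\mb{x}^k-\J\mb{x}^k\|^2$ term of Corollary~\ref{aux_ctr} yields the coefficient $8m\alpha^2 L^2 + \frac{9}{4m}$ on $\frac{1}{n}\|\mb{x}^k-\J\mb{x}^k\|^2$, exactly matching the statement. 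The contraction factor $1 - \frac{1}{4m}$ on $t^k$ is inherited directly from Corollary~\ref{aux_ctr} and is untouched by this manipulation.

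There is essentially no obstacle here: the entire argument is a mechanical reshuffling of already-established inequalities, with the only substantive ingredient being that Lemma~\ref{basic}(\ref{upper_L})---a consequence of $L$-smoothness alone, not the PL condition---lets us trade the sum-of-gradients quantity $\|\ol{\nabla\mb{f}}(\mb{x}^k)\|^2$ for the functional suboptimality, which is the natural Lyapunov potential used in the subsequent linear-rate analysis under the PL condition in Subsection~\ref{PL_sec}.
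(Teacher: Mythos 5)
Your proposal is correct and follows essentially the same route as the paper: the paper's proof likewise splits $\|\ol{\nabla\mb{f}}(\mb{x}^k)\|^2 \leq 2\|\nabla F(\ol{\mb{x}}^k)\|^2 + 2\|\nabla F(\ol{\mb{x}}^k)-\ol{\nabla\mb{f}}(\mb{x}^k)\|^2$, invokes Lemma~\ref{basic}(\ref{Lbound}) and Lemma~\ref{basic}(\ref{upper_L}) to get $\|\ol{\nabla\mb{f}}(\mb{x}^k)\|^2 \leq 4L(F(\ol{\mb{x}}^k)-F^*) + \frac{2L^2}{n}\|\mb{x}^k-\J\mb{x}^k\|^2$, and substitutes this into Corollary~\ref{aux_ctr}. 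Your coefficient bookkeeping matches the stated bound exactly, and your observation that only $L$-smoothness (not the PL condition) is needed here is also consistent with the paper.
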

\begin{proof}
By Lemma~\ref{basic}(\ref{Lbound}) and~\ref{basic}(\ref{upper_L}), we have:~$\forall k\geq0$,
\begin{align}\label{PL_refine_0}
\|\ol{\nf}(\x^{k})\|^2 
\leq&~2\|\nabla F(\ol{\x}^k)\|^2 + 2\|\nabla F(\ol{\x}^k) - \ol{\nf}(\x^{k})\|^2  
\nonumber\\
\leq&~4L\left(F(\ol{\x}^k) - F^*\right) + \frac{2L^2}{n}\|\x^k -\J\x^k\|^2.     
\end{align}
The proof follows by applying~\eqref{PL_refine_0} to Corollary~\ref{aux_ctr}.
\end{proof}

We finally refine Lemma~\ref{GT_final} as follows.
\begin{lemma}\label{GT_PL}
If~$0<\a\leq\min\left\{\frac{1-\lambda^2}{16\lambda},\frac{\sqrt{n}}{\sqrt{8m}}\right\}\frac{1}{L}$, then~$\forall k\geq0$,
\begin{align*}
&\E\big[\|\mb{y}^{k+2}-\J\mb{y}^{k+2}\|^2\big] \nonumber\\
\leq&~\frac{1+\lambda^2}{2}\E\big[\|\mb{y}^{k+1}-\J\mb{y}^{k+1}\|^2\big]
+ \frac{31L^2}{1-\lambda^2}\E\big[\|\x^{k}-\J\x^{k}\|^2\big]  \nonumber\\
&+ \frac{97L^2n}{8}\E\big[t^k\big] + \frac{64\lambda^2\a^2L^3n}{1-\lambda^2}\E\big[F(\ol{\x}^k) - F^*\big].  \end{align*}
\end{lemma}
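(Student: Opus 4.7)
The plan is to refine Lemma~\ref{GT_final} by substituting the PL-based upper bound on $\|\ol{\nf}(\x^k)\|^2$ already derived in equation~\eqref{PL_refine_0}, namely
\begin{align*}
\|\ol{\nf}(\x^{k})\|^2 \leq 4L\bigl(F(\ol{\x}^k) - F^*\bigr) + \frac{2L^2}{n}\|\x^k -\J\x^k\|^2,
\end{align*}
which follows from Lemma~\ref{basic}(\ref{Lbound})--(\ref{upper_L}) combined with Young's inequality. Since all previous lemmas in Subsection~\ref{sec_sgt} and~\ref{sec_bvr} are already applied in Lemma~\ref{GT_final}, no further analytical work on the gradient tracking recursion itself is required; the only task is to exchange the $\|\ol{\nf}(\x^k)\|^2$ dependence for a $(F(\ol{\x}^k)-F^*)$ dependence, which is what the PL-based analysis needs.

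Substituting this bound into the last term of Lemma~\ref{GT_final} transforms the coefficient $\frac{16\lambda^2\a^2L^2n}{1-\lambda^2}\E[\|\ol{\nf}(\x^k)\|^2]$ into
\begin{align*}
\frac{64\lambda^2\a^2L^3 n}{1-\lambda^2}\E\bigl[F(\ol{\x}^k)-F^*\bigr] + \frac{32\lambda^2\a^2L^4}{1-\lambda^2}\E\bigl[\|\x^k-\J\x^k\|^2\bigr],
\end{align*}
which matches the desired $(F(\ol{\x}^k)-F^*)$ term exactly, and contributes an extra consensus-error term that must be absorbed into the existing $\frac{30.5L^2}{1-\lambda^2}\E[\|\x^k-\J\x^k\|^2]$ coefficient.

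The remaining step is to verify that the step-size restriction $\a\leq\frac{1-\lambda^2}{16\lambda L}$ is strong enough to guarantee $\frac{30.5L^2}{1-\lambda^2}+\frac{32\lambda^2\a^2L^4}{1-\lambda^2}\leq\frac{31L^2}{1-\lambda^2}$. This reduces to the scalar inequality $32\lambda^2\a^2L^2\leq 1/2$, equivalently $\lambda\a L\leq 1/8$. Under the hypothesis, $\lambda\a L\leq \frac{1-\lambda^2}{16}\leq \frac{1}{16}<\frac{1}{8}$, so the bound holds. No obstacle of substance appears here: the proof is essentially a one-line substitution followed by a numerical check on the step-size, so the writeup will amount to invoking Lemma~\ref{GT_final}, applying~\eqref{PL_refine_0}, and collecting constants.
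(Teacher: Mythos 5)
Your proposal is correct and is essentially identical to the paper's own proof: both apply the bound~\eqref{PL_refine_0} to the last term of Lemma~\ref{GT_final}, obtain the coefficient $\left(30.5+32\lambda^2\a^2L^2\right)\frac{L^2}{1-\lambda^2}$ on the consensus error, and absorb it into $\frac{31L^2}{1-\lambda^2}$ using $\a\leq\frac{1-\lambda^2}{16\lambda L}$. The constants and the numerical check all match the paper.
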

\begin{proof}
Applying~\eqref{PL_refine_0} to Lemma~\ref{GT_final}, we have: if~$0<\a\leq\min\left\{\frac{1-\lambda^2}{16\lambda},\frac{\sqrt{n}}{\sqrt{8m}}\right\}\frac{1}{L}$, then~$\forall k\geq0$,
\begin{align*}
\E\big[\|\mb{y}^{k+2}&-\J\mb{y}^{k+2}\|^2\big] 
\leq\frac{1+\lambda^2}{2}\E\big[\|\mb{y}^{k+1}-\J\mb{y}^{k+1}\|^2\big]
\nonumber\\
&+ \left(30.5+ 32\lambda^2\a^2L^2\right)\frac{L^2}{1-\lambda^2}\E\big[\|\x^{k}-\J\x^{k}\|^2\big] \nonumber\\
&+ \frac{97L^2n}{8}\E\big[t^k\big] 
+ \frac{64\lambda^2\a^2L^3n}{1-\lambda^2}\E\big[F(\ol{\x}^k) - F^*\big]. 
\end{align*}
We conclude by~$30.5+32\lambda^2\a^2L^2\leq 31$ if~$0<\a\leq\frac{1-\lambda^2}{16\lambda L}$.
\end{proof}

Now, we write~\eqref{consensus1}, Lemma~\ref{descent_PL},~\ref{aux_PL} and~\ref{GT_PL} in a LTI system.
\begin{prop}
If~$0<\a\leq\min\left\{\frac{1-\lambda^2}{16\lambda},\frac{\sqrt{n}}{\sqrt{8m}},\frac{1}{2}\right\}\frac{1}{L}$, then
$$
\mb{v}^{k+1} \leq \H_{\a}\mb{v}^{k}, \qquad\forall k\geq0, 
$$
where~$\mb{v}^{k}\in\mbb{R}^4$ and $\H_{\a}\in\mbb{R}^{4\times4}$ are given by
\begin{align*}
\mb{v}^k :=& 
\begin{bmatrix}
\mbb{E}\left[\frac{1}{n}\|\mb{x}^{k}-\J\mb{x}^{k}\|^2\right]\\
\frac{1}{L}\mathbb{E}\left[F(\ol{\x}^k)-F^*\right]\\
\mathbb{E}\left[t^{k}\right]\\
\mbb{E}\left[\frac{1}{nL^2}\|\mb{y}^{k+1}-\J\mb{y}^{k+1}\|^2\right]
\end{bmatrix},
\\
\H_\a :=&
\begin{bmatrix}
\frac{1+\lambda^2}{2} & 0 & 0 & \frac{2\lambda^2\alpha^2L^2}{1-\lambda^2} \\
\a L&  1-\mu\a & \frac{\a^2L^2}{n} & 0\\
8m\a^2L^2+\frac{9}{4m} & 16m\a^2L^2 & 1-\frac{1}{4m} & 0 \\
\frac{31}{1-\lambda^2} & \frac{64\lambda^2\a^2L^2}{1-\lambda^2} & \frac{97}{8} & \frac{1+\lambda^2}{2}
\end{bmatrix}.
\end{align*}
\end{prop}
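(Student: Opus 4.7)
The plan is to assemble the four component recursions already established in the paper into a single vector-valued inequality, one for each coordinate of $\mb{v}^k$. Specifically, I would take inequality~\eqref{consensus1} for the consensus error, Lemma~\ref{descent_PL} for the scaled optimality gap, Lemma~\ref{aux_PL} for the auxiliary sequence $t^k$, and Lemma~\ref{GT_PL} for the scaled gradient-tracking error, apply the tower property of conditional expectation where relevant, and then normalize each inequality so that its LHS is exactly one of the four coordinates of $\mb{v}^{k+1}$.

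First, I would divide~\eqref{consensus1} by $n$ and rewrite the gradient-tracker term as $\frac{2\alpha^2\lambda^2}{1-\lambda^2}\cdot\frac{1}{n}\|\mb{y}^{k+1}-\J\mb{y}^{k+1}\|^2 = \frac{2\lambda^2\alpha^2 L^2}{1-\lambda^2}\cdot\frac{1}{nL^2}\|\mb{y}^{k+1}-\J\mb{y}^{k+1}\|^2$. Taking total expectation reproduces exactly the first row of $\H_\a$. Second, I would take expectations in Lemma~\ref{descent_PL} and divide both sides by $L$; the descent term $(1-\mu\a)(F(\ol{\x}^k)-F^*)/L$ becomes the $(2,2)$ entry $1-\mu\a$, the consensus coefficient $\alpha L^2/n$ becomes $\alpha L\cdot\frac{1}{n}\|\mb{x}^k-\J\mb{x}^k\|^2$ after dividing by $L$, and the $t^k$ coefficient becomes $\alpha^2 L^2/n$, yielding the second row.

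Third, I would take expectation in Lemma~\ref{aux_PL}; to express the optimality gap in the same units as the second coordinate of $\mb{v}^k$, namely $\frac{1}{L}(F(\ol{\x}^k)-F^*)$, the coefficient $16m\a^2 L$ is rewritten as $16m\a^2 L^2\cdot\frac{1}{L}$, matching the $(3,2)$ entry. The remaining coefficients $1-\frac{1}{4m}$ and $8m\alpha^2 L^2+\frac{9}{4m}$ transfer verbatim into the third row. Fourth, I would take expectation in Lemma~\ref{GT_PL} and divide by $nL^2$; the cross-term $\frac{64\lambda^2\a^2L^3n}{1-\lambda^2}(F(\ol{\x}^k)-F^*)$ becomes $\frac{64\lambda^2\a^2 L^2}{1-\lambda^2}\cdot\frac{1}{L}(F(\ol{\x}^k)-F^*)$, the consensus coefficient becomes $\frac{31}{1-\lambda^2}$, and the remaining two entries $\frac{97}{8}$ and $\frac{1+\lambda^2}{2}$ match directly, producing the fourth row.

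The step-size hypothesis $0<\a\leq\min\{\frac{1-\lambda^2}{16\lambda},\frac{\sqrt{n}}{\sqrt{8m}},\frac{1}{2}\}\frac{1}{L}$ is precisely the intersection of the step-size requirements of Lemmas~\ref{descent_PL} (which needs $\a\leq\frac{1}{2L}$), \ref{aux_PL} (which needs $\a\leq\frac{\sqrt{n}}{\sqrt{8m}L}$), and \ref{GT_PL} (which needs both $\a\leq\frac{1-\lambda^2}{16\lambda L}$ and $\a\leq\frac{\sqrt{n}}{\sqrt{8m}L}$), so no additional restriction is needed. No real obstacle is anticipated: this result is essentially a bookkeeping assembly of previously derived bounds, and the only subtle point is keeping track of the $1/L$, $1/n$, and $1/(nL^2)$ normalization factors so that every entry of $\H_\a$ ends up in the dimensionally correct form.
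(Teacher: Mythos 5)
Your proposal is correct and matches the paper's approach exactly: the paper simply states that Proposition~2 collects~\eqref{consensus1}, Lemma~\ref{descent_PL}, Lemma~\ref{aux_PL}, and Lemma~\ref{GT_PL} into an LTI system, which is precisely the row-by-row assembly and renormalization you carry out. Your accounting of the $1/L$, $1/n$, and $1/(nL^2)$ scalings and of the step-size intersection is accurate.
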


We are ready to prove Theorem~\ref{main_PL}, i.e., to establish an upper bound on~$\rho(\H_\a)$ that characterizes the explicit linear rate of \textbf{\texttt{GT-SAGA}} under the PL condition.
\begin{P2}
In light of Lemma~\ref{rho_bound}, we solve for the range of~$\a$ under which there exists a positive vector~$\mb{s}_\a = [s_1,s_2,s_3,s_4]^\top$ s.t.~$\H_\a\mb{s}_\a \leq (1-\frac{\mu\a}{2})\mb{s}_\a$, i.e.,
\begin{align}
&\frac{2\lambda^2\a^2L^2}{1-\lambda^2}s_4 \leq\Big(\frac{1-\lambda^2}{2}-\frac{\mu\a}{2}\Big)s_1,        \label{s1}\\
&\a Ls_1 + \frac{\a^2L^2}{n}s_3 \leq\frac{\mu\a}{2}s_2, 
\label{s2}\\
&\Big(8m\a^2L^2+\frac{9}{4m}\Big)s_1 + 16m\a^2L^2s_2
\leq\frac{1-2m\mu\a}{4m}s_3, \label{s3} \\
&\dfrac{31}{1-\lambda^2}s_1 + \frac{64\lambda^2\a^2L^2}{1-\lambda^2}s_2 + \frac{97}{8}s_3 \leq\frac{1-\lambda^2-\mu\a}{2}s_4. \label{s4} 
\end{align}
We first note that~\eqref{s2} is equivalent to~$\frac{\a L^2}{n}s_3 \leq\frac{\mu}{2}s_2 - Ls_1$, based on which we set the values of~$s_1,s_2,s_3$ as
\begin{align}\label{s123}
s_1 = 1/(4\kappa), \qquad s_2 = 1, \qquad s_3 = n/(4\a\kappa L),    
\end{align}
where~$\kappa = L/\mu$. Next, we write~\eqref{s3} equivalently as
\begin{align}\label{s3_1}
8m\a^2L^2\left(s_1 + 2s_2\right) 
\leq\frac{1-2m\mu\a}{4m}s_3 - \frac{9}{4m}s_1.   
\end{align}
According to~\eqref{s3_1}, we enforce~$0<\a\leq\frac{1}{4m\mu}$,
i.e.,~$\frac{1-2m\mu\a}{4m}\geq\frac{1}{8m}$; therefore to make~\eqref{s3_1} hold, with the help of the values of~$s_1,s_2,s_3$ in~\eqref{s123}, it suffices to further choose~$\a$ such that
\begin{align}\label{s3_2}
18m\a^2L^2 
\leq&~\frac{1}{16m\kappa}\left(\frac{n}{2\a L} - 9\right).   
\end{align}
According to~\eqref{s3_2}, we enforce~$0<\a\leq\frac{n}{36L}$, i.e., $\frac{n}{2\a L} - 9\geq\frac{n}{4\a L}$, and therefore to make~\eqref{s3_2} hold, it suffices to further choose~$\a$ such that~$0< \a \leq \frac{n^{1/3}}{10.5m^{2/3}\kappa^{1/3}L}$. Next, according to~\eqref{s4} we further enforce~$0<\a \leq \frac{1-\lambda^2}{2\mu}$, i.e.,~$\frac{1-\lambda^2-\mu\a}{2} \geq \frac{1-\lambda^2}{4}$ and therefore to make~\eqref{s4} hold we set~$s_4$ as
\begin{align}\label{s_4}
s_4 
=&~\dfrac{124}{(1-\lambda^2)^2}s_1 + \frac{256\lambda^2\a^2L^2}{(1-\lambda^2)^2}s_2 + \frac{97}{2(1-\lambda^2)}s_3.    
\end{align}
Finally, since~$0<\a\leq\frac{1-\lambda^2}{2\mu}$, to make~\eqref{s1} hold, it suffices to further choose~$\a$ such that~$
\frac{8\lambda^2\a^2L^2}{(1-\lambda^2)^2}\frac{s_4}{s_1} \leq 1$, which, 
using the values of~$s_1,s_4$, becomes
\begin{align}\label{s1_2}
\frac{992\lambda^2\a^2L^2}{(1-\lambda^2)^4} 
+ \frac{8192\kappa\lambda^4\a^4L^4}{(1-\lambda^2)^4} 
+ \frac{388\lambda^2n\a L}{(1-\lambda^2)^3}
\leq 1.
\end{align}
If $0<\a\leq\min\left\{\frac{(1-\lambda^2)^2}{55\lambda}, \frac{1-\lambda^2}{13\lambda\kappa^{1/4}},\frac{(1-\lambda^2)^3}{388\lambda^2n}\right\}\frac{1}{L}$, then the terms on the LHS of~\eqref{s1_2} are respectively less than~$\frac{1}{3}$ and thus~\eqref{s1_2} holds. 
Based on the above derivations and Lemma~\ref{rho_bound}, we have: if~$0<\alpha\leq\ol{\alpha}_2$ for~$\ol{\alpha}_2$ defined in Theorem~\ref{main_PL}, 
then~$\rho(\H_\a) \leq 1 - \frac{\mu\a}{2}$ which concludes the proof.
\end{P2}

\section{Conclusion} \label{clu}
In this paper, we analyze \textbf{\texttt{GT-SAGA}}, a decentralized randomized incremental gradient method that combines node-level variance reduction and network-level gradient tracking. For both general smooth non-convex problems and problems where the global function additionally satisfies the PL condition, we prove that \textbf{\texttt{GT-SAGA}} achieves fast convergence rate. We further identify practical regimes where \textbf{\texttt{GT-SAGA}} outperforms the existing approaches. We also present numerical simulations to verify the theoretical results in this paper. {\color{black}Future research includes generalization of \textbf{\texttt{GT-SAGA}} to the setting of time-varying directed networks~\cite{TV-AB} and of zeroth order gradient computation~\cite{zero_Anit,zero_Hu,GT_zero}. It is also of interest to incorporate weighted sampling techniques~\cite{Prox_SVRG} in \textbf{\texttt{GT-SAGA}} to improve the dependence of the smoothness parameters of the component functions on the convergence rate.} 

\bibliography{reference.bib}
\bibliographystyle{IEEEbib}

\begin{IEEEbiography}[{\includegraphics[width=1in,height=1.2in,clip,keepaspectratio]{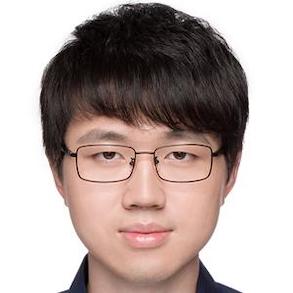}}]{Ran Xin} received his B.S. degree in Mathematics and Applied Mathematics from Xiamen University, China, in 2016, and M.S. degree in Electrical and Computer Engineering from Tufts University in 2018. Currently, he is a Ph.D. candidate in the Electrical and Computer Engineering Department at Carnegie Mellon University. His research interests include convex and non-convex optimization, stochastic approximation, and machine learning.
\end{IEEEbiography}

\begin{IEEEbiography}[{\includegraphics[width=1in,height=1.2in,clip,keepaspectratio]{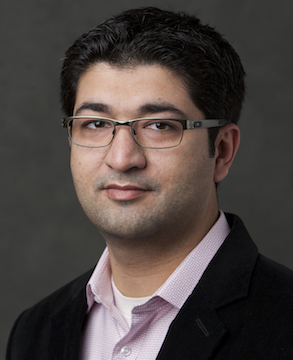}}]{Usman A. Khan} is an Associate Professor of Electrical and Computer Engineering (ECE) at Tufts University, Medford, MA, USA. His research interests include statistical signal processing, network science, and decentralized optimization over multi-agent systems. 
He received his B.S. degree in 2002 from University of Engineering and Technology, Pakistan, M.S. degree in 2004 from University of Wisconsin-Madison, USA, and Ph.D. degree in 2009 from Carnegie Mellon University, USA, all in ECE. 
He 
is currently an Associate Editor of the \textit{IEEE Control System Letters}, \textit{IEEE Transactions on Signal and Information Processing over Networks}, and \textit{IEEE Open Journal of Signal Processing}. 
\end{IEEEbiography}

\begin{IEEEbiography}[{\includegraphics[width=1in,height=1.2in,clip,keepaspectratio]{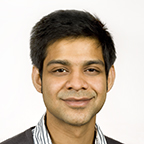}}]{Soummya Kar} received a B.Tech. in Electronics and Electrical Communication Engineering from the Indian Institute of Technology, Kharagpur, India, in May 2005 and a Ph.D. in electrical and computer engineering from Carnegie Mellon University, Pittsburgh, PA, in 2010. From June 2010 to May 2011 he was with the Electrical Engineering Department at Princeton University as a Postdoctoral Research Associate. He is currently a Professor of Electrical and Computer Engineering at Carnegie Mellon University. His research interests span several aspects of decision-making in large-scale networked dynamical systems with applications to problems in network science, cyber-physical systems and energy systems. 

\end{IEEEbiography}

\end{document}